\documentclass[11pt]{article}

\usepackage[a4paper,margin=1in]{geometry}
\usepackage{amsmath,amssymb,amsthm,bm}
\usepackage[caption=false,font=normalsize,labelfont=sf,textfont=sf]{subfig}
\usepackage{graphicx}
\usepackage{enumitem}
\usepackage{natbib}
\newtheorem{assumption}{Assumption}
\newtheorem{lemma}{Lemma}
\newtheorem{corollary}{Corollary}
\newtheorem{theorem}{Theorem}
\newtheorem{remark}{Remark}
\newtheorem{proposition}{Proposition}
\usepackage{algorithm}
\usepackage{algpseudocode}
\newcommand{\ba}{\begin{aligned}}
\newcommand{\ea}{\end{aligned}}

\newcommand{\col}{\operatorname{col}}

\newcommand{\ub}{\mathbf{u}}

\newcommand{\xb}{\mathbf{x}}
\newcommand{\yb}{\mathbf{y}}
\newcommand{\zb}{\mathbf{z}}

\newcommand{\Gb}{\mathbf{G}}

\newcommand{\sbf}{\mathbf{s}}

\title{Distributed Nonlinear Equality-Constrained Optimization via Feedback Linearization and Singular Perturbation}
\author{%
    Zihao Ren,
    Lei Wang,
    Hongye Su, and
    Guodong Shi\thanks{Zihao Ren, Lei Wang and Hongye Su are with the College of Control Science and Engineering, Zhejiang University, P.R. China. (E-mail: zhren2000; lei.wangzju; hysu69@zju.edu.cn). Guodong Shi is with  the Australia Centre for Field Robotics, The University of Sydney, Australia. (Email: guodong.shi@sydney.edu.au).  
}}

\date{}

\begin{document}
\maketitle

\begin{abstract}
Distributed optimization becomes particularly challenging when nonconvex
objectives are combined with nonlinear equality constraints: feasibility is
network coupled, while most existing exponentially convergent methods rely on
convexity or affine constraints. This paper introduces a
feedback-linearization and singular-perturbation framework for two
representative problem classes involving, respectively, distributed local
nonlinear equality constraints and nonlinear aggregate equality constraints.
The framework separates the regulation of consensus and feasibility residuals
from optimization along the feasible manifold. Specifically, an ideal
feedback-linearized dynamics is constructed whose output behavior can be
explicitly assigned, while its zero-output dynamics coincides with the
projected gradient flow of the aggregate objective on the feasible manifold.
The ideal feedback-linearizing input, however, is determined by a
state-dependent, network-coupled algebraic equation and is therefore not
directly implementable in a distributed manner. To overcome this obstruction,
we replace the nonlocal algebraic solution with a fast residual-tracking
dynamics, yielding a singular-perturbation realization that uses only local and
neighboring information. Under a local quadratic-growth condition imposed only
on the feasible manifold, we establish local exponential convergence of both
the ideal and distributed dynamics. For sufficiently strong time-scale
separation, the convergence rate of the distributed realization can be chosen
arbitrarily close to that of the ideal dynamics. Explicit Euler
discretizations are also proved to preserve local exponential convergence.
Under stronger manifold regularity conditions, the region of attraction
extends to a tubular neighborhood of the feasible manifold and, for affine
local equality constraints, to the whole admissible state space. Moreover,
classical primal--dual dynamics are recovered as a particular realization of
the proposed framework. Numerical studies validate the
theoretical results.
\end{abstract}

\textbf{Keywords:} distributed optimization, nonconvex constrained optimization,
nonlinear equality constraints.

\section{Introduction}

Distributed optimization plays an increasingly pivotal role in
networked intelligent systems, including multi-agent coordination, smart grids,
and cyber-physical systems
\cite{magnusbook,martinez07,kar12,Rabbat2010}. It is also an important
computational paradigm for decentralized and federated machine learning, where
agents collaboratively train a shared model from locally stored data without
centralizing the raw samples
\cite{mcmahan2017communication,kong2021consensus}. In such problems, a group of
agents cooperatively minimizes an aggregate objective composed of locally
stored functions through local information exchange. Basic combinations of
consensus dynamics and gradient descent lead to distributed gradient methods,
which typically achieve sublinear convergence under convexity assumptions
\cite{DSMF,DSCO}. To improve the convergence rate, gradient-tracking and
primal--dual methods have been developed and can achieve linear convergence
under strong convexity and smoothness conditions
\cite{nedic2017achieving,alghunaim2020linear}. These developments have also
been extended to nonconvex and constrained distributed optimization settings
\cite{XY-LCOF,di2016next,scutari2019distributed}.

An important class of constrained distributed optimization problems involves
local constraints, where each agent is subject to its own feasibility
requirements, such as
\(h_i(\mathbf x_i)=0\) or \(g_i(\mathbf x_i)\leq0\).
Such formulations naturally arise when individual agents have different
physical limitations and local data-consistency requirements,
with representative applications in smart-grid control and distributed
statistical learning
\cite{chang2014distributed,vardakas2015survey,mateos2010distributed,
ji2023distributed}. Projection-based algorithms and
primal--dual dynamics have been extensively studied for distributed local
constraints; see, e.g.,
\cite{nedic2010constrained,zhu2012distributed,AMAS,zeng2017distributed}.

Another important class involves coupled constraints, where feasibility is
determined by an aggregate quantity across the network, such as
\(\sum_{i=1}^{N}h_i(\mathbf x_i)=0\) or
\(\sum_{i=1}^{N}g_i(\mathbf x_i)\leq0\). In this case, no individual agent can
verify feasibility using only its local constraint map, and additional
coordination is required to enforce the aggregate condition. Coupled constraints also commonly
arise in resource allocation, energy management, networked estimation and distributed learning 
\cite{cherukuri2016initialization,cherukuri2018distributed,
gan2013optimal,zhang2013robust,cotter2019training,harder2023hard}. Existing approaches mainly rely on dual
decomposition, primal--dual coordination, and auxiliary-variable techniques;
see, e.g.,
\cite{falsone2017dual,wu2022distributed,notarnicola2020constraint,
camisa2021distributed,falsone2023augmented}.

Despite these advances, distributed optimization with equality constraints
still leaves substantial room for further development, especially compared with
unconstrained distributed optimization. Most existing studies on
equality-constrained distributed optimization are developed under convexity
assumptions on the objective functions
\cite{zhu2012distributed,AMAS,falsone2017dual,wu2022distributed}, while results for nonconvex
objectives are comparatively limited. In addition, the equality constraints
considered in the above existing works are affine, including local constraints of
the form \(\mathbf{A}_i\mathbf{x}_i=\mathbf{b}_i\) and coupled constraints of the form
\(\sum_{i=1}^{N}\mathbf{A}_i\mathbf{x}_i=\mathbf{b}\). Nonlinear equality constraints have received
relatively less attention. To the best of our knowledge,
\cite{matei2013distributed} considered distributed optimization with local
nonlinear equality constraints and established exponential convergence under a
strong convexity-type condition on the Hessian of the Lagrangian. In contrast,
distributed optimization with nonlinear coupled equality constraints remains
much less explored. Overall, for both distributed local equality constraints and coupled equality
constraints, achieving exponential convergence under nonconvex objectives and
nonlinear equality constraints remains a largely open question. This problem class is practically relevant, since nonconvex objectives
and nonlinear equality constraints arise in many applications, particularly in
machine-learning problems
\cite{chen2021decentralized,lee2022fairpca,cotter2019training,
harder2023hard}.

Recently, in the centralized optimization setting, control-theoretic
Lagrangian flows have been developed as an alternative viewpoint for
equality-constrained optimization with nonconvex objective functions and
nonlinear equality constraints
\cite{cerone2025framework,zhang2026constrained,Pirrera2026Global}.
In particular, feedback-linearization-based (FL-based) dynamics regard the Lagrange
multiplier as a control input and the constraint violation as the system
output. By assigning a stable output dynamics, the constraint violation is
actively driven to zero, while the resulting zero dynamics describes the
motion along the constraint manifold. This perspective allows exponential
convergence under a restricted strong-convexity condition on the constraint
manifold, without requiring strong convexity of the objective function over
the whole Euclidean space
\cite{Pirrera2026Global}. Inspired by this line of work, we develop an FL-based design
for the corresponding distributed optimization problems with equality
constraints.

In this paper, we develop a feedback-linearization-based framework and its
distributed singular-perturbation realization for optimization with nonconvex
objectives and nonlinear equality constraints. We consider two representative
problem classes: distributed local nonlinear equality constraints and
nonlinear aggregate equality constraints. At the ideal level, the consensus
and equality-constraint residuals are selected as regulated outputs, while
multiplier-related variables are treated as control inputs. The resulting ideal
FL dynamics admits an explicitly assigned stable output vector field, and its
zero-output dynamics coincides with the projected gradient flow of the
aggregate objective on the corresponding feasible manifold. At the
implementation level, we replace the nonlocal algebraic FL solution with a fast residual-tracking dynamics and introduce a change of variables that yields
a fully distributed 
singular-perturbation (SP) realization using only local and neighboring
information. Under a local quadratic-growth condition imposed only on the
feasible manifold, we establish local exponential convergence of the ideal FL
dynamics, the distributed SP realizations, and their explicit Euler
discretizations. Under stronger manifold regularity conditions, the region of
attraction is further enlarged to a tubular neighborhood of the feasible
manifold and, for affine local equality constraints, to the whole admissible
state space. The main contributions of this paper are summarized as follows.
\begin{itemize}
    \item We develop an FL--SP framework for distributed optimization with
    both local and coupled nonlinear equality constraints. The ideal FL
    formulation separates the regulation of consensus and constraint residuals
    from the projected optimization motion on the feasible manifold.  The resulting
    nonlocal algebraic FL relation is then replaced by a fast
    residual-tracking dynamics, yielding a fully distributed SP realization
    that requires only local and neighboring information.

        \item We establish local exponential convergence under a local
    quadratic-growth condition imposed only on the feasible manifold, rather
    than convexity or strong convexity over an ambient convex domain. This
    permits the objective functions to be nonconvex away from the feasible
    manifold. For a sufficiently small singular-perturbation parameter, the
    convergence rate of the distributed SP realization can be chosen
    arbitrarily close to that of the ideal FL dynamics. Local exponential
    convergence is also proved for the explicit Euler discretizations. Moreover, under
    stronger regularity and manifold PL conditions, the region of attraction
    is enlarged to a tubular neighborhood and, for affine local equality
    constraints under global smoothness conditions, to the whole admissible
    state space.

      \item We reveal the relation between the proposed SP realization and
    classical primal--dual dynamics. A properly scaled linear output feedback
    recovers a classical primal--dual method as a special case, whereas a fixed
    output feedback produces a high-gain primal--dual-type realization that
    approximates the assigned FL output dynamics. 

\end{itemize}

The remainder of this paper is organized as follows.
Section~\ref{sec:problem} formulates the two distributed equality-constrained
optimization problems and introduces the standing assumptions.
Section~\ref{sec:local-equality} develops the ideal FL design and its
distributed SP realization for local equality constraints.
Section~\ref{sec:coupled-equality} extends the framework to coupled equality
constraints through an auxiliary-variable lifting.
Section~\ref{sec:discuss} analyzes the connection with classical primal--dual
dynamics and establishes enlarged-region-of-attraction results.
Section~\ref{sec:discrete-implementation} presents the explicit discrete-time
implementations and their convergence guarantees.
Section~\ref{sec:simulations} provides numerical simulations, and
Section~\ref{sec:conclusion} concludes the paper. All proofs are provided in
the Appendices.

\emph{\bf Notation.}
In this paper, \(\|\cdot\|\) denotes the Euclidean norm for vectors and the
induced spectral norm for matrices.  For a matrix \(\mathbf A\), \(\operatorname{rank}\mathbf A\), \(\ker \mathbf A\), and \(\operatorname{Im}\mathbf A\) denote
its rank, kernel, and image space, respectively. For a symmetric matrix
\(\mathbf A\), \(\mathbf A\succeq 0\) (\(\mathbf A\succ 0\)) means that \(\mathbf A\) is positive
(semi)definite. The notation \(\mathbf A\otimes\mathbf B\) denotes the Kronecker
product of \(\mathbf A\) and \(\mathbf B\).  For matrices \(\mathbf A_1,\ldots,\mathbf A_N\) with compatible
column dimensions, \(\operatorname{col}(\mathbf A_1,\ldots,\mathbf A_N)\)
denotes their vertical concatenation, and \(\operatorname{blkdiag}(\mathbf A_1,\ldots,
\mathbf A_N)\) denotes the block diagonal matrix with \(\mathbf A_i\) as its
\(i\)-th diagonal block. For a differentiable function \(f\), \(\nabla f\) and
\(\nabla^2 f\) denote its gradient and Hessian, respectively. For a vector-valued
map \(\mathbf h\), \(\partial \mathbf h/\partial \mathbf x\) denotes its
Jacobian with respect to \(\mathbf x\). For a closed subspace,
\(\mathcal T\), \(\Pi_{\mathcal T}\) denotes the orthogonal projection onto
\(\mathcal T\).

\section{Problem Formulation}
\label{sec:problem}
In this section, we formulate two classes of distributed optimization problems
with equality constraints, i.e., problems with distributed local equality
constraints and problems with coupled equality constraints. We also introduce
the standing assumptions used throughout the paper.

Consider a network of \(N\) agents indexed by
\(\mathcal V:=\{1,\ldots,N\}\). The communication topology is described by an undirected weighted graph
\(\mathcal G=(\mathcal V,\mathcal E)\), where \(\mathcal E\) is the edge set. Let
\(\mathbf A_{\mathcal G}:=[a_{ij}]\in\mathbb R^{N\times N}\) be the weighted adjacency matrix, where
\(a_{ij}>0\) if agents \(i\) and \(j\) can exchange information, and \(a_{ij}=0\) otherwise. The neighbor set of agent \(i\) is
\(\mathcal N_i:=\{j\in\mathcal V:a_{ij}>0\}\). The graph Laplacian
\(\mathbf L_{\mathcal G}\) is defined by
\([\mathbf L_{\mathcal G}]_{ij}=-a_{ij}\) for \(i\neq j\), and
\([\mathbf L_{\mathcal G}]_{ii}=\sum_{j=1}^{N}a_{ij}\). For the network considered in this paper, we have the following assumption.

\begin{assumption}
\label{asm:graph}
The graph \(\mathcal G\) is undirected and connected.
\end{assumption}

Under Assumption~\ref{asm:graph}, one has
\(\mathbf L_{\mathcal G}=\mathbf L_{\mathcal G}^{\top}\succeq0\),
\(\mathbf L_{\mathcal G}\mathbf 1_N=\mathbf 0_N\) \cite{magnusbook}. 

In this network, each agent \(i\) has local decision variable \(\mathbf x_i\in\mathbb R^d\) and a local objective function
\(f_i:\mathbb R^d\to\mathbb R\). 
This paper considers the following two classes of equality-constrained distributed optimization problems.

First, we consider a distributed equality-constrained problem, where each agent has a local equality constraint
\(\mathbf h_i^{\mathrm d}:\mathbb R^d\to\mathbb R^{r_i}\). The problem is
\begin{equation}
\label{prob:dist}
\begin{aligned}
    \min_{\mathbf x_1,\ldots,\mathbf x_N\in\mathbb R^d}\quad
        & \sum_{i=1}^{N}f_i(\mathbf x_i) \\
    \mathrm{s.t.}\quad
        & \mathbf x_i=\mathbf x_j,\qquad \forall i,j\in\mathcal V,\\
        & \mathbf h_i^{\mathrm d}(\mathbf x_i)=\mathbf 0,\qquad i=1,\ldots,N.
\end{aligned}
\tag{\(\mathcal P_{\mathrm d}\)}
\end{equation}
Let \(\mathbf x:=\operatorname{col}(\mathbf x_1,\ldots,\mathbf x_N)\). The feasible set of the problem is
\(\mathcal X_{\mathrm d}
=\{\mathbf 1_N\otimes\mathbf s:\mathbf s\in\Omega_{\mathrm d}\}\), where the corresponding centralized feasible set is
\(
    \Omega_{\mathrm d}
    :=
    \left\{
    \mathbf s\in\mathbb R^d:
    \mathbf h_i^{\mathrm d}(\mathbf s)=\mathbf 0,\ 
    i=1,\ldots,N
    \right\}
\).

Second, we consider a coupled equality-constrained problem, where each agent has a local map
\(\mathbf h_i^{\mathrm c}:\mathbb R^d\to\mathbb R^q\), and the equality constraint is imposed through their aggregate. The problem is
\begin{equation}
\label{prob:coupled}
\begin{aligned}
    \min_{\mathbf x_1,\ldots,\mathbf x_N\in\mathbb R^d}\quad
        & \sum_{i=1}^{N}f_i(\mathbf x_i) \\
    \mathrm{s.t.}\quad
        & \mathbf x_i=\mathbf x_j,\qquad \forall i,j\in\mathcal V,\\
        & \sum_{i=1}^{N}\mathbf h_i^{\mathrm c}(\mathbf x_i)=\mathbf 0.
\end{aligned}
\tag{\(\mathcal P_{\mathrm c}\)}
\end{equation}
The feasible set of the problem is
\(\mathcal X_{\mathrm c}
=\{\mathbf 1_N\otimes\mathbf s:\mathbf s\in\Omega_{\mathrm c}\}\), where the corresponding centralized feasible set is
\(
    \Omega_{\mathrm c}
    :=
    \left\{
    \mathbf s\in\mathbb R^d:
    \sum_{i=1}^{N}\mathbf h_i^{\mathrm c}(\mathbf s)=\mathbf 0
    \right\}
\).

The problem \(\mathcal P_{\mathrm d}\) has been widely studied in the literature. Existing results include algorithms for affine equality constraints, e.g., \cite{zhu2012distributed,AMAS}, as well as extensions to nonlinear equality constraints, e.g., \cite{matei2013distributed}. In contrast, for the coupled problem \(\mathcal P_{\mathrm c}\), most existing studies focus on coupled affine equality constraints, see, e.g., \cite{LYA,falsone2017dual,wu2022distributed}. Compared with these works, the coupled problem considered here allows a general nonlinear aggregate equality constraint.

In the following, we introduce standing assumptions for
\(\mathcal P_{\mathrm d}\) and \(\mathcal P_{\mathrm c}\).
For compactness, let \(\ell\in\{\mathrm d,\mathrm c\}\), where
\(\ell=\mathrm d\) corresponds to \(\mathcal P_{\mathrm d}\), and
\(\ell=\mathrm c\) corresponds to \(\mathcal P_{\mathrm c}\).
Let \(F(\mathbf s):=\sum_{i=1}^{N}f_i(\mathbf s)\).
\begin{assumption}
\label{asm:smooth}
For each \(i\in\mathcal V\), the objective function
\(f_i:\mathbb R^d\to\mathbb R\) is twice continuously differentiable.
Moreover, for each \(\ell\in\{\mathrm d,\mathrm c\}\), the constraint map
\(\mathbf h_i^{\ell}:\mathbb R^d\to\mathbb R^{r_i}\) (\(\ell=\mathrm d\)) or
\(\mathbf h_i^{\ell}:\mathbb R^d\to\mathbb R^{q}\) (\(\ell=\mathrm c\)) is twice continuously
differentiable, with Jacobian
\(
    \mathbf J_i^{\ell}(\mathbf x_i)
    :=
    \frac{\partial \mathbf h_i^{\ell}(\mathbf x_i)}
    {\partial \mathbf x_i}.
\)
In addition, \(\nabla^2 f_i\) and \(\mathbf J_i^{\ell}\) are locally
Lipschitz continuous.
\end{assumption}

\begin{assumption}
\label{asm:growth}
For problem \(\mathcal P_{\ell}\), there exist a point
\(\mathbf s_{\ell}^{\ast}\in\Omega_{\ell}\), an open neighborhood
\(\mathcal U_{\ell}\subset\mathbb R^d\) of
\(\mathbf s_{\ell}^{\ast}\), and a constant \(\rho_{\ell}>0\) such that
\(\mathbf s_{\ell}^{\ast}\) is a local minimizer of \(F\) on
\(\Omega_{\ell}\cap\mathcal U_{\ell}\). Moreover, \(F\) satisfies the local
quadratic growth condition on the feasible set:
\[
    F(\mathbf s)
    \ge
    F(\mathbf s_{\ell}^{\ast})
    +
    \frac{\rho_{\ell}}{2}
    \|\mathbf s-\mathbf s_{\ell}^{\ast}\|^2,
    \qquad
    \forall
    \mathbf s\in\Omega_{\ell}\cap\mathcal U_{\ell}.
\]
\end{assumption}

Under Assumption~\ref{asm:growth},
\(\mathbf s_{\ell}^{\ast}\) is the locally unique optimal solution to minimize $F$ on
\(\Omega_{\ell}\cap\mathcal U_{\ell}\). Consequently,
\(\mathbf x_{\ell}^{\ast}:=\mathbf 1_N\otimes\mathbf s_{\ell}^{\ast}\)
is the locally unique primal optimizer of 
\(\mathcal P_{\ell}\) in a neighborhood of
\(\mathbf 1_N\otimes\mathcal U_{\ell}\).

\begin{remark}
\label{rem:growth}
Assumption~\ref{asm:growth} is a local quadratic-growth condition on the
feasible set, rather than a domain-wise convexity assumption on $\mathcal U_{\ell}$. This assumption
is less restrictive than requiring strong convexity of the objective function
in a convex neighborhood of the optimizer, as commonly imposed in related
distributed optimization studies
\cite{zhu2012distributed,AMAS,matei2013distributed,
falsone2017dual,wu2022distributed}. Indeed, local strong convexity in a convex neighborhood implies local quadratic growth around the optimizer, but
the converse is not true. Moreover, Assumption~\ref{asm:growth} is imposed
only along feasible points in \(\Omega_\ell\cap \mathcal U_\ell\), and hence
allows the objective function to be nonconvex away from the feasible set
\(\Omega_\ell\).
\end{remark}

\begin{remark}
    In this paper, we 
adopt quadratic growth instead of a strong-convexity-type condition
on \(\Omega_\ell\) because, for a general nonlinear constrained manifold, a
pairwise inequality written with ambient gradients and Euclidean chords is not
intrinsic to the restricted objective. To see this, consider
\(\Omega=\{(s_1,s_2)\in\mathbb R^2:s_2=s_1^2\}\) and
\(F(s_1,s_2)=-M(s_2-s_1^2)\), where \(M>0\). Then all feasible points are optimal and
Assumption~\ref{asm:growth} does not hold at any isolated optimizer. However,
for two feasible points \(\xb=(a,a^2)\) and \(\yb=(b,b^2)\), one has
\(\nabla F(\xb)^\top(\yb-\xb)=-M(b-a)^2\). Moreover, on any local patch
\(\Omega_r=\{(s_1,s_1^2): |s_1|\le r\}\), the strongly-convexity-type inequality
\[
    F(\yb)\ge F(\xb)+\nabla F(\xb)^\top(\yb-\xb)
    +\frac{\rho}{2}\|\yb-\xb\|^2,\qquad \forall \xb,\yb\in\Omega_r,
\]
holds for any \(\rho\in(0,2M/(1+4r^2)]\). Indeed, for
\(\xb=(a,a^2)\) and \(\yb=(b,b^2)\), the right-hand side equals
\((b-a)^2[-M+\frac{\rho}{2}(1+(a+b)^2)]\le 0=F(\yb)\). Thus, such an
ambient-gradient-based pairwise inequality may hold even without an isolated
local optimizer, whereas Assumption~\ref{asm:growth} directly imposes the
quadratic separation needed in our convergence analysis.
\end{remark}

\begin{remark}
\label{rem:relation-nonconvex}
The local quadratic-growth condition in Assumption~\ref{asm:growth} is an
instance of the regularity conditions widely used as alternatives to strong
convexity for establishing exponential convergence of first-order methods in
nonconvex optimization \cite{drusvyatskiy2018error,necoara2019linear}, with other representative examples including the
Polyak--{\L}ojasiewicz condition; see, e.g.,
\cite{karimi2016linear,XY-LCOF}.
The key difference here is that Assumption~\ref{asm:growth} is imposed only on
the equality-constrained feasible manifold \(\Omega_{\ell}\), rather than on a convex space. Therefore, the objective function is less restricted in the sense that it is allowed to
be nonconvex away from the feasible manifold. 
\end{remark}

The next two assumptions impose local regularity of the equality constraints for
\(\mathcal P_{\mathrm d}\) and \(\mathcal P_{\mathrm c}\), respectively.

\begin{assumption}
\label{asm:reg-dist}
For problem \(\mathcal P_{\mathrm d}\), on the set  
\(\mathcal X_{\mathrm d}\cap(\mathbf 1_N\otimes\mathcal U_{\mathrm d})\), the stacked constraint Jacobian has full row rank, i.e.,
\[
    \operatorname{rank}
    \operatorname{col}
    \big(
        \mathbf J_1^{\mathrm d}(\mathbf x_1),\ldots,
        \mathbf J_N^{\mathrm d}(\mathbf x_N)
    \big)
    =
    r,
\]
where \(r:=\sum_{i=1}^{N}r_i\).
\end{assumption}

\begin{assumption}
\label{asm:reg-coupled}
For problem \(\mathcal P_{\mathrm c}\),  on the set  
\(\mathcal X_{\mathrm c}\cap(\mathbf 1_N\otimes\mathcal U_{\mathrm c})\), the aggregate constraint Jacobian has
full row rank, i.e.,
\[
    \operatorname{rank}
   \sum_{i=1}^{N}\mathbf J_i^{\mathrm c}(\mathbf x_i)=q.
\]
\end{assumption}

 
\begin{remark}
\label{rem:reg}
In parallel with the full-rank assumption in the centralized optimization
setting of~\cite{Pirrera2026Global}, Assumptions~\ref{asm:reg-dist} and \ref{asm:reg-coupled} are tailored to distributed scenarios in the sense that they only
require full rank of the equality constraints along the consensus direction,
i.e., no first-order redundancy appears when
\(\xb_1=\xb_2=\cdots=\xb_N\). When the constraint
functions \(h_i\) are affine and have linearly independent rows, the assumption is always satisfied. The assumption fails in singular cases where the constraint map loses first-order information
about the feasible manifold. A typical example is a constraint of the
form \(h(s)=s^2\) at \(s=0\), for which the feasible set is smooth but
the constraint gradient $\nabla h(0) =0$ and therefore cannot characterize the
normal direction.
\end{remark}

\section{Distributed Local Equality Constraints}
\label{sec:local-equality}

In this section, we study the distributed optimization problem with local
equality constraints, $\mathcal{P}_{\rm d}$. We first construct ideal feedback-linearization design
and then develop a singular-perturbation realization that admits a fully
distributed implementation.

\subsection{Ideal Feedback-Linearization Design}
\label{sec:fl-design-dist}

Inspired by \cite{Pirrera2026Global}, we develop an ideal feedback-linearization (FL) design for
problem \(\mathcal P_{\mathrm d}\). 
For simplicity, we 
let \(\mathbf L:=\mathbf L_{\mathcal G}\otimes \mathbf I_d\),
\(f(\mathbf x):=\sum_{i=1}^{N}f_i(\mathbf x_i)\) and
\(\mathbf h^{\mathrm d}(\mathbf x)
:=\operatorname{col}(\mathbf h_1^{\mathrm d}(\mathbf x_1),\ldots,
\mathbf h_N^{\mathrm d}(\mathbf x_N))\).
Then \(\mathcal P_{\mathrm d}\) can be written compactly as
\[
\begin{aligned}
    \min_{\mathbf x\in\mathbb R^{Nd}}\quad
        & f(\mathbf x) \\
    \mathrm{s.t.}\quad
        & \mathbf L\mathbf x=\mathbf 0,\\
        & \mathbf h^{\mathrm d}(\mathbf x)=\mathbf 0 .
\end{aligned}
\]

The Lagrangian associated with the above problem is
\[
    \mathcal L_{\mathrm d}
    (\mathbf x,\bm\lambda,\bm\mu)
    =
    f(\mathbf x)
    +
    \bm\lambda^{\top}\mathbf L\mathbf x
    +
    \bm\mu^{\top}\mathbf h^{\mathrm d}(\mathbf x),
\]
where \(\bm\lambda:=\operatorname{col}(\bm\lambda_1,\ldots,\bm\lambda_N)\in\mathbb R^{Nd}\) is the multiplier associated with the
consensus constraint and
\(\bm\mu:=\operatorname{col}(\bm\mu_1,\ldots,\bm\mu_N)\in\mathbb R^r\) is the multiplier associated with the local equality
constraints. Since \(\mathbf L=\mathbf L^{\top}\), the gradient descent
dynamics of \(\mathcal L_{\mathrm d}\) with respect to \(\mathbf x\) is
\[
    \dot{\mathbf x}
    =
    -\nabla f(\mathbf x)
    -
    \mathbf L\bm\lambda
    -
    \tilde{\mathbf J}^{\mathrm d}(\mathbf x)^{\top}\bm\mu,
\]
where 
\(\tilde{\mathbf J}^{\mathrm d}(\mathbf x)
:=\operatorname{blkdiag}(\mathbf J_1^{\mathrm d}(\mathbf x_1),\ldots,
\mathbf J_N^{\mathrm d}(\mathbf x_N))\).
We let 
\(\mathbf v:=\mathbf L\bm\lambda\in\operatorname{Im}\mathbf L\) to avoid any possible 
$\mathbf{L}^2$ terms later.
Then the primal dynamics becomes
\begin{equation}
\label{eq:fl-x-dynamics-dist}
    \dot{\mathbf x}
    =
    -\nabla f(\mathbf x)
    -
    \mathbf v
    -
    \tilde{\mathbf J}^{\mathrm d}(\mathbf x)^{\top}\bm\mu .
\end{equation}

From a control viewpoint, we regard
\(\mathbf v\in\operatorname{Im}\mathbf L\) and
\(\bm\mu\in\mathbb R^r\) as control inputs, and the output is chosen as the
constraint violation
\begin{equation}
\label{eq:fl-output-dist}
    \mathbf y_{\mathrm d}
    =
    \operatorname{col}(\mathbf y_g,\mathbf y_h)
    :=
    \operatorname{col}
    \big(
    \mathbf L\mathbf x,
    \mathbf h^{\mathrm d}(\mathbf x)
    \big)\in \operatorname{Im}\mathbf L\times\mathbb R^r.
\end{equation}
Here, \(\mathbf y_g\) measures the violation of the consensus constraint, and
\(\mathbf y_h\) measures the violation of the local equality constraints.

Differentiating \eqref{eq:fl-output-dist} along \eqref{eq:fl-x-dynamics-dist} gives
\begin{equation}
\label{eq:fl-output-compact-dist}
    \dot{\mathbf y}_{\mathrm d}
    =
    -
    \mathbf A_{\mathrm d}(\mathbf x)\nabla f(\mathbf x)
    -
    \mathbf M_{\mathrm d}(\mathbf x)\mathbf u_{\mathrm d},
\end{equation}
where \(\mathbf u_{\mathrm d}:=\operatorname{col}(\mathbf v,\bm\mu)\),
\(\mathbf A_{\mathrm d}(\mathbf x)
:=\operatorname{col}(\mathbf L,\tilde{\mathbf J}^{\mathrm d}(\mathbf x))\), $\mathbf B_{\mathrm d}(\mathbf x)=\begin{bmatrix}
    \mathbf{I}&\tilde{\mathbf J}^{\mathrm d}(\mathbf x)^\top
\end{bmatrix}$, and
\[
    \mathbf M_{\mathrm d}(\mathbf x)
    :=\mathbf A_{\mathrm d}(\mathbf x)\mathbf B_{\mathrm d}(\mathbf x)=
    \begin{bmatrix}
        \mathbf L
        &
        \mathbf L\tilde{\mathbf J}^{\mathrm d}(\mathbf x)^{\top}
        \\
        \tilde{\mathbf J}^{\mathrm d}(\mathbf x)
        &
        \tilde{\mathbf J}^{\mathrm d}(\mathbf x)
        \tilde{\mathbf J}^{\mathrm d}(\mathbf x)^{\top}
    \end{bmatrix}.
\]

Let \(\mathbf G_{\mathrm d}(\mathbf y_{\mathrm d})\) be a desired output vector
field, i.e., 
\(
    \dot{\mathbf y}_{\mathrm d}
    =
    \mathbf G_{\mathrm d}(\mathbf y_{\mathrm d}).
\) The output dynamics should be designed such that $\yb_{\rm d}$ converges to zero, meaning all constraints are satisfied. Accordingly, we select the assigned output vector field
\(\mathbf G_{\mathrm d}:
\operatorname{Im}\mathbf L\times\mathbb R^r
\to\operatorname{Im}\mathbf L\times\mathbb R^r\)
to be continuously differentiable and to satisfy
\(\mathbf G_{\mathrm d}(\mathbf 0)=\mathbf 0\). Moreover, the assigned output
dynamics \(\dot{\mathbf y}_{\mathrm d}
=\mathbf G_{\mathrm d}(\mathbf y_{\mathrm d})\) is designed to be globally
exponentially stable at \(\mathbf y_{\mathrm d}=\mathbf 0\) with convergence rate \(\alpha_G>0\).

\begin{remark}
\label{rem:G-choice}
A typical choice satisfying the above design properties is the linear
feedback law
\(
\mathbf G_{\mathrm d}=\operatorname{col}(\mathbf G_g,\mathbf G_h)
    =
    \operatorname{col}( -k_1\mathbf y_g,-k_2\yb_h)
\)
where \(k_1,k_2>0\). In this case, the
convergence rate of each output component can be adjusted. Besides linear feedback,
nonlinear output dynamics can also be adopted. For example,  one may choose \(
    [\mathbf G_{g}(\mathbf y_{g})]_i
    =
    -k_1[\mathbf y_{g}]_i,
\) and
\(
    [\mathbf G_{h}(\mathbf y_{h})]_i
    =
    -k_2[\mathbf y_{h}]_i
    -\gamma[\mathbf y_{h}]_i^3
\) for some $\gamma>0$,
which may provide faster
decay of the corresponding output residual, see numerical simulations in Section~\ref{sec.sim_compare}.
\end{remark}

Combining this desired output
dynamics with \eqref{eq:fl-output-compact-dist}, the ideal feedback-linearizing
input $\ub_{\rm d}$ is designed to satisfy
\begin{equation}
\label{eq:ideal-fl-relation-dist}
    \mathbf M_{\mathrm d}(\mathbf x)\mathbf u_{\mathrm d}
    +
    \mathbf A_{\mathrm d}(\mathbf x)\nabla f(\mathbf x)
    +
    \mathbf G_{\mathrm d}(\mathbf y_{\mathrm d})
    =
    \mathbf 0,
    \qquad
    \mathbf v\in\operatorname{Im}\mathbf L.
\end{equation}
The algebraic relation \eqref{eq:ideal-fl-relation-dist} admits a locally unique solution, as stated in the following lemma.
\begin{lemma}
\label{lem:fl-wp-dist}
Suppose Assumptions~\ref{asm:graph} and \ref{asm:reg-dist} hold. Then, for every \(\mathbf x\) sufficiently close to
\(\mathcal X_{\mathrm d}\cap(\mathbf 1_N\otimes\mathcal U_{\mathrm d})\), the
algebraic relation \eqref{eq:ideal-fl-relation-dist} admits a unique solution
\(\mathbf u_{\mathrm d}=\operatorname{col}(\mathbf v,\bm\mu)\) satisfying
\(\mathbf v\in\operatorname{Im}\mathbf L\).
\end{lemma}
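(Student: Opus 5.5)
The plan is to show that the linear map \(\mathbf u_{\mathrm d}\mapsto \mathbf M_{\mathrm d}(\mathbf x)\mathbf u_{\mathrm d}\), restricted to the domain \(\mathcal D:=\operatorname{Im}\mathbf L\times\mathbb R^r\), is a bijection onto the codomain \(\mathcal C:=\operatorname{Im}\mathbf L\times\mathbb R^r\) for \(\mathbf x\) near the feasible set. Once that holds, the right-hand side \(-\mathbf A_{\mathrm d}(\mathbf x)\nabla f(\mathbf x)-\mathbf G_{\mathrm d}(\mathbf y_{\mathrm d})\) lies in \(\mathcal C\), because \(\mathbf L\nabla f\in\operatorname{Im}\mathbf L\) and \(\mathbf G_{\mathrm d}\) maps into \(\mathcal C\) by design. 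Unique solvability of \eqref{eq:ideal-fl-relation-dist} then follows immediately.

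First, the map is well defined. For \(\mathbf u_{\mathrm d}=\operatorname{col}(\mathbf v,\bm\mu)\in\mathcal D\), the first block \(\mathbf L(\mathbf v+\tilde{\mathbf J}^{\mathrm d\top}\bm\mu)\) lies in \(\operatorname{Im}\mathbf L\). Moreover, \(\dim\mathcal D=\dim\mathcal C\). Hence it suffices to prove injectivity of \(\mathbf M_{\mathrm d}(\mathbf x)|_{\mathcal D}\).

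Suppose \(\mathbf M_{\mathrm d}\mathbf u_{\mathrm d}=\mathbf 0\) and set \(\mathbf w:=\mathbf B_{\mathrm d}\mathbf u_{\mathrm d}=\mathbf v+\tilde{\mathbf J}^{\mathrm d\top}\bm\mu\). Then \(\mathbf A_{\mathrm d}\mathbf w=\mathbf 0\), so \(\mathbf L\mathbf w=\mathbf 0\) and \(\tilde{\mathbf J}^{\mathrm d}\mathbf w=\mathbf 0\). Since \(\mathbf w\perp\mathbf v\) and \(\mathbf v\in\operatorname{Im}\mathbf L\), we get
\(\|\mathbf w\|^2=\mathbf w^\top\mathbf v+\mathbf w^\top\tilde{\mathbf J}^{\mathrm d\top}\bm\mu=0\).
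Therefore \(\mathbf v=-\tilde{\mathbf J}^{\mathrm d\top}\bm\mu\in\operatorname{Im}\mathbf L=\ker(\mathbf 1_N^\top\otimes\mathbf I_d)\), which uses Assumption~\ref{asm:graph} (connectedness gives \(\ker\mathbf L=\operatorname{span}(\mathbf 1_N\otimes\mathbf I_d)\)). This yields \(\sum_i\mathbf J_i^{\mathrm d}(\mathbf x_i)^\top\bm\mu_i=\mathbf 0\).

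At a consensus point \(\mathbf x=\mathbf 1_N\otimes\mathbf s\), this reads \(\operatorname{col}(\mathbf J_1^{\mathrm d}(\mathbf s),\ldots,\mathbf J_N^{\mathrm d}(\mathbf s))^\top\bm\mu=\mathbf 0\). By Assumption~\ref{asm:reg-dist}, the stacked matrix has full row rank, so \(\bm\mu=\mathbf 0\) and hence \(\mathbf v=\mathbf 0\). So injectivity holds exactly on \(\mathcal X_{\mathrm d}\cap(\mathbf 1_N\otimes\mathcal U_{\mathrm d})\).

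The main obstacle is extending this to \(\mathbf x\) merely \emph{near} that set, where the \(\mathbf x_i\) differ and the rank argument no longer applies directly. I would handle it by perturbation. Fix orthonormal bases so that \(\mathbf M_{\mathrm d}(\mathbf x)|_{\mathcal D}\) is represented by a square matrix \(\hat{\mathbf M}(\mathbf x)\) depending continuously on \(\mathbf x\); this uses continuity of \(\mathbf J_i^{\mathrm d}\) from Assumption~\ref{asm:smooth}, though that assumption is not listed in the lemma and is implicitly needed. Then \(\det\hat{\mathbf M}\) is continuous and nonzero on the feasible set, so it remains nonzero on an open neighborhood of each feasible point.

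Taking the union of these neighborhoods gives an open set containing \(\mathcal X_{\mathrm d}\cap(\mathbf 1_N\otimes\mathcal U_{\mathrm d})\) on which invertibility holds. This is the sense of ``sufficiently close''; a uniform tube would require compactness, e.g.\ after shrinking \(\mathcal U_{\mathrm d}\) to a bounded closure. Alternatively, for \(\mathbf x\) near \(\mathbf 1_N\otimes\mathbf s\), one can note that \(\sum_i\mathbf J_i^{\mathrm d}(\mathbf x_i)^\top\bm\mu_i\) is a small perturbation of a full-column-rank map in \(\bm\mu\), which gives \(\bm\mu=\mathbf 0\) directly. In either form, the solution is \(\mathbf u_{\mathrm d}=-\hat{\mathbf M}(\mathbf x)^{-1}\big(\mathbf A_{\mathrm d}\nabla f+\mathbf G_{\mathrm d}(\mathbf y_{\mathrm d})\big)\), unique in \(\mathcal D\).
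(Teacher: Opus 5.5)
Your proof is correct and follows essentially the same route as the paper. You restrict \(\mathbf M_{\mathrm d}(\mathbf x)\) to \(\operatorname{Im}\mathbf L\times\mathbb R^r\), show that \(\mathbf w=\mathbf B_{\mathrm d}\mathbf u_{\mathrm d}\) lies in \(\ker\mathbf A_{\mathrm d}(\mathbf x)\) and is orthogonal to it, then use \(\mathbf 1_N^\top\otimes\mathbf I_d\) and the full-row-rank condition to get \(\bm\mu=\mathbf 0\) and \(\mathbf v=\mathbf 0\), and conclude by a dimension count. Your explicit continuity argument for non-consensus \(\mathbf x\), together with your remark that Assumption~\ref{asm:smooth} is implicitly needed, makes precise a step the paper passes over: it applies Assumption~\ref{asm:reg-dist} directly at points off the feasible set.
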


\begin{proof}

We first show that
\(
\operatorname{Im}\tilde{\mathbf J}^{\mathrm d}(\mathbf x)^\top
\cap\operatorname{Im}\mathbf L
=
\{\mathbf 0\}
\).
Suppose that
\(
\tilde{\mathbf J}^{\mathrm d}(\mathbf x)^\top\bm\alpha
\in\operatorname{Im}\mathbf L
\)
for some
\(
\bm\alpha
=
\operatorname{col}(\bm\alpha_1,\ldots,\bm\alpha_N)
\).
Since
\(
\operatorname{Im}\mathbf L
=
\ker(\mathbf 1_N^\top\otimes\mathbf I_d)
\),
premultiplication by
\(
\mathbf 1_N^\top\otimes\mathbf I_d
\)
gives
\(
\sum_{i=1}^{N}
\mathbf J_i^{\mathrm d}(\mathbf x_i)^\top\bm\alpha_i
=
\mathbf 0
\).
By Assumption~\ref{asm:reg-dist}, the matrix
\(
\operatorname{col}
(
\mathbf J_1^{\mathrm d}(\mathbf x_1),\ldots,
\mathbf J_N^{\mathrm d}(\mathbf x_N)
)
\)
has full row rank. Hence its transpose has a trivial kernel, which implies
\(\bm\alpha=\mathbf 0\).

Let
\(
\mathcal A_{\mathrm d}
:=
\operatorname{Im}\mathbf L\times\mathbb R^r
\).
We next prove that the restriction of
\(\mathbf M_{\mathrm d}(\mathbf x)\) to
\(\mathcal A_{\mathrm d}\) is injective. Let
\(
\bar{\mathbf u}_{\mathrm d}
=
\operatorname{col}(\bar{\mathbf v},\bar{\bm\mu})
\in\mathcal A_{\mathrm d}
\)
satisfy
\(
\mathbf M_{\mathrm d}(\mathbf x)\bar{\mathbf u}_{\mathrm d}
=
\mathbf 0
\).
Define
\(
\bar{\mathbf w}
=
\mathbf B_{\mathrm d}(\mathbf x)\bar{\mathbf u}_{\mathrm d}
=
\bar{\mathbf v}
+
\tilde{\mathbf J}^{\mathrm d}(\mathbf x)^\top\bar{\bm\mu}
\).
Since
\(
\mathbf M_{\mathrm d}
=
\mathbf A_{\mathrm d}\mathbf B_{\mathrm d}
\),
we have
\(
\mathbf L\bar{\mathbf w}=\mathbf 0
\)
and
\(
\tilde{\mathbf J}^{\mathrm d}(\mathbf x)\bar{\mathbf w}
=
\mathbf 0
\).
Thus,
\(
\bar{\mathbf w}
\in
\ker\mathbf L
\cap
\ker\tilde{\mathbf J}^{\mathrm d}(\mathbf x)
\).
On the other hand, because
\(\bar{\mathbf v}\in\operatorname{Im}\mathbf L\), one has
\(
\bar{\mathbf w}
\in
\operatorname{Im}\mathbf L
+
\operatorname{Im}\tilde{\mathbf J}^{\mathrm d}(\mathbf x)^\top
\).
Since \(\mathbf L\) is symmetric,
\(
\operatorname{Im}\mathbf L
=
(\ker\mathbf L)^\perp
\),
and therefore
\(
\operatorname{Im}\mathbf L
+
\operatorname{Im}\tilde{\mathbf J}^{\mathrm d}(\mathbf x)^\top
=
\big(
\ker\mathbf L
\cap
\ker\tilde{\mathbf J}^{\mathrm d}(\mathbf x)
\big)^\perp .
\)
Consequently, \(\bar{\mathbf w}\) belongs to a subspace and its orthogonal
complement, and hence \(\bar{\mathbf w}=\mathbf 0\).
It follows that
\(
\bar{\mathbf v}
+
\tilde{\mathbf J}^{\mathrm d}(\mathbf x)^\top\bar{\bm\mu}
=
\mathbf 0
\).
Therefore,
\(
\tilde{\mathbf J}^{\mathrm d}(\mathbf x)^\top\bar{\bm\mu}
\in\operatorname{Im}\mathbf L
\).
By \(
\operatorname{Im}\tilde{\mathbf J}^{\mathrm d}(\mathbf x)^\top
\cap\operatorname{Im}\mathbf L
=
\{\mathbf 0\}
\) established above,
\(\bar{\bm\mu}=\mathbf 0\), and consequently
\(\bar{\mathbf v}=\mathbf 0\). Hence
\(\bar{\mathbf u}_{\mathrm d}=\mathbf 0\), proving injectivity.

Moreover,
\(\mathbf M_{\mathrm d}(\mathbf x)\) maps
\(\mathcal A_{\mathrm d}\) into itself, because the first block of
\(\mathbf M_{\mathrm d}(\mathbf x)\mathbf u_{\mathrm d}\) belongs to
\(\operatorname{Im}\mathbf L\). Since \(\mathcal A_{\mathrm d}\) is
finite-dimensional, the restriction of
\(\mathbf M_{\mathrm d}(\mathbf x)\) to \(\mathcal A_{\mathrm d}\) is
therefore bijective.

Finally,
\(
-\mathbf A_{\mathrm d}(\mathbf x)\nabla f(\mathbf x)
-\mathbf G_{\mathrm d}(\mathbf y_{\mathrm d})
\in\mathcal A_{\mathrm d}
\),
because its first block belongs to \(\operatorname{Im}\mathbf L\).
Thus, \eqref{eq:ideal-fl-relation-dist} admits a unique solution
\(
\mathbf u_{\mathrm d}
=
\operatorname{col}(\mathbf v,\bm\mu)
\in\mathcal A_{\mathrm d}
\),
which completes the proof.
\end{proof}

By Lemma~\ref{lem:fl-wp-dist}, the admissible solution of
\eqref{eq:ideal-fl-relation-dist} is locally unique. We denote this
solution by \(\mathbf u_{\mathrm d}^{\rm q}(\mathbf x)\).
The corresponding ideal FL algorithm is
\begin{equation}
\label{eq:ideal-fl-algorithm-dist}
\begin{aligned}
    \dot{\mathbf x}
    &=
    -\nabla f(\mathbf x)
    -
    \mathbf v
    -
    \tilde{\mathbf J}^{\mathrm d}(\mathbf x)^{\top}\bm\mu,
    \\
   \mathbf u_{\mathrm d}
&=
\mathbf u_{\mathrm d}^{\rm q}(\mathbf x),
\end{aligned}
\end{equation}
where $\mathbf u_{\mathrm d}=\operatorname{col}(\mathbf v,\bm\mu)$.  
The direct implementation of \eqref{eq:ideal-fl-algorithm-dist} may
face difficulty in distributed realization. This issue
will be discussed and resolved in the next section. 
In the remainder of this section, we proceed under the assumption that the
ideal control law \eqref{eq:ideal-fl-algorithm-dist} is implementable.

Under \eqref{eq:ideal-fl-algorithm-dist}, the closed-loop output dynamics is exactly
\(\dot{\mathbf y}_{\mathrm d}
=\mathbf G_{\mathrm d}(\mathbf y_{\mathrm d})\).
It remains to characterize the motion on the zero-output manifold of
\eqref{eq:ideal-fl-algorithm-dist}. In the
present setting, the zero-output manifold is $\mathcal{X}_{\rm d}$. On this manifold, one has \(\mathbf y_{\mathrm d}=\mathbf 0\). Since
\(\mathbf G_{\mathrm d}(\mathbf 0)=\mathbf 0\), the ideal FL law enforces
\(\dot{\mathbf y}_{\mathrm d}=\mathbf 0\). Thus, 
\(\mathcal X_{\mathrm d}\) is a locally invariant manifold of
\eqref{eq:ideal-fl-algorithm-dist}. 
On the zero-output manifold \(\mathcal X_{\mathrm d}\), every state can be
written as
\(
    \mathbf x=\mathbf 1_N\otimes \mathbf s\) for 
    \(\mathbf s\in\Omega_{\mathrm d}.
\)
Moreover, the corresponding tangent velocity satisfies
\(\dot{\mathbf x}=\mathbf 1_N\otimes \dot{\mathbf s}\), with
\(\dot{\mathbf s}\in T_{\mathbf s}\Omega_{\mathrm d}\). Since the ideal FL
input cancels the normal component of \(-\nabla f(\mathbf x)\) relative to
\(\mathcal X_{\mathrm d}\), the dynamics of $\xb(t)$ on $\mathcal{X}_{\rm d}$ can be written equivalently as the
projected gradient flow
\[
    \dot{\mathbf s}
    =
    -\frac{1}{N}
    \Pi_{T_{\mathbf s}\Omega_{\mathrm d}}
    \nabla F(\mathbf s).
\]
This dynamics is the projected gradient flow of the centralized
objective \(F\) on the feasible manifold \(\Omega_{\mathrm d}\). By Assumption~\ref{asm:growth},
\(F\) has quadratic growth on
\(\Omega_{\mathrm d}\cap\mathcal U_{\mathrm d}\) around
\(\mathbf s_{\mathrm d}^{\ast}\). Hence
\(\mathbf s(t)\) locally  converges to
\(\mathbf s_{\mathrm d}^{\ast}\), and consequently
\(\mathbf x(t)=\mathbf 1_N\otimes\mathbf s(t)\) locally 
converges to
\(\mathbf x_{\mathrm d}^{\ast}
=\mathbf 1_N\otimes\mathbf s_{\mathrm d}^{\ast}\)
along the zero-output manifold \(\mathcal X_{\mathrm d}\).

Overall, in dynamics~\eqref{eq:ideal-fl-algorithm-dist}, the control law $\ub_{\rm d}$ is designed such that the output dynamics is governed by
\(\dot{\mathbf y}_{\mathrm d}
=\mathbf G_{\mathrm d}(\mathbf y_{\mathrm d})\).
When the output is driven to zero, the state trajectory $\xb(t)$ remains on the zero-output manifold $\mathcal{X}_{\rm d}$, and its motion on this manifold converges to a local optimizer \(\mathbf x_{\mathrm d}^{\ast}\). Formally, the following proposition
describes the convergence property of the ideal feedback-linearized
dynamics \eqref{eq:ideal-fl-algorithm-dist}.

\begin{proposition}
\label{prop:fl-dist}
Consider problem \(\mathcal P_{\mathrm d}\). Suppose
Assumptions~\ref{asm:graph}--\ref{asm:reg-dist} hold. Then
\(\mathbf x_{\mathrm d}^{\ast}\) is a locally exponentially stable equilibrium
of the ideal FL dynamics \eqref{eq:ideal-fl-algorithm-dist}. Specifically, there
exist constants \(C_{\mathrm{FL}}>0\), \(\alpha_{\mathrm{FL}}>0\), and
\(\delta_{\mathrm{FL}}>0\) such that, for every initial condition satisfying
\(\|\mathbf x(0)-\mathbf x_{\mathrm d}^{\ast}\|<\delta_{\mathrm{FL}}\), the
corresponding solution satisfies
\[
    \|\mathbf x(t)-\mathbf x_{\mathrm d}^{\ast}\|
    +
    \|\mathbf L\mathbf x(t)\|
    +
    \|\mathbf h^{\mathrm d}(\mathbf x(t))\|
    \le
    C_{\mathrm{FL}}e^{-\alpha_{\mathrm{FL}}t}
    \|\mathbf x(0)-\mathbf x_{\mathrm d}^{\ast}\|,
    \qquad
    t\ge0 ,
\]
with admissible rate
\(\alpha_{\mathrm{FL}}\in
(0,\min\{\rho_{\mathrm d}/N,\alpha_G\})\).
\end{proposition}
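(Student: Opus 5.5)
The plan is to exploit the cascade structure created by the feedback-linearizing law: the output $\mathbf y_{\mathrm d}$ evolves autonomously, and the state motion near $\mathcal X_{\mathrm d}$ is a perturbation of the projected gradient flow.

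\emph{Step 1: output decay.} Lemma~\ref{lem:fl-wp-dist} shows that $\mathbf M_{\mathrm d}(\mathbf x)$ restricted to $\mathcal A_{\mathrm d}$ is invertible near $\mathcal X_{\mathrm d}\cap(\mathbf 1_N\otimes\mathcal U_{\mathrm d})$. Since the data are $C^1$, the solution $\mathbf u_{\mathrm d}^{\rm q}(\mathbf x)$ is locally Lipschitz, so the closed loop \eqref{eq:ideal-fl-algorithm-dist} is well posed in a neighborhood $\mathcal B$ of $\mathbf x_{\mathrm d}^\ast$. Along solutions that stay in $\mathcal B$ we have exactly $\dot{\mathbf y}_{\mathrm d}=\mathbf G_{\mathrm d}(\mathbf y_{\mathrm d})$. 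Global exponential stability of this system gives $\|\mathbf y_{\mathrm d}(t)\|\le c\,e^{-\alpha' t}\|\mathbf y_{\mathrm d}(0)\|$ for any $\alpha'<\alpha_G$, and $\|\mathbf y_{\mathrm d}(0)\|\le c'\|\mathbf x(0)-\mathbf x_{\mathrm d}^\ast\|$. This bounds the terms $\|\mathbf L\mathbf x\|+\|\mathbf h^{\mathrm d}(\mathbf x)\|$ in the estimate.

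\emph{Step 2: decomposition of the state.} By Assumption~\ref{asm:reg-dist} and the argument in Lemma~\ref{lem:fl-wp-dist}, the map $\mathbf x\mapsto\mathbf y_{\mathrm d}$ has a Jacobian $\mathbf A_{\mathrm d}$ of full rank on $\mathcal A_{\mathrm d}$ near $\mathbf x_{\mathrm d}^\ast$. Therefore $\mathcal X_{\mathrm d}$ is locally a smooth embedded manifold. Using a tubular-neighborhood or implicit-function chart, write $\mathbf x=\mathbf 1_N\otimes\pi(\mathbf x)+\mathbf n$, where $\pi(\mathbf x)\in\Omega_{\mathrm d}$ and $\|\mathbf n\|\asymp\|\mathbf y_{\mathrm d}\|$.

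For the zero-output system, the calculation preceding the proposition gives $\dot{\mathbf s}=-\frac1N\Pi_{T_{\mathbf s}\Omega_{\mathrm d}}\nabla F(\mathbf s)$. For a general state, the right-hand side of \eqref{eq:ideal-fl-algorithm-dist} is $C^1$. Hence the induced dynamics of $\mathbf s:=\pi(\mathbf x)$ has the form
\[
\dot{\mathbf s}=-\tfrac1N\Pi_{T_{\mathbf s}\Omega_{\mathrm d}}\nabla F(\mathbf s)+\mathbf e(t),\qquad \|\mathbf e(t)\|\le c''\|\mathbf y_{\mathrm d}(t)\|.
\]

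\emph{Step 3 (main obstacle): exponential rate on the manifold from quadratic growth.} I take $V(\mathbf s)=F(\mathbf s)-F(\mathbf s_{\mathrm d}^\ast)$ restricted to $\Omega_{\mathrm d}$. Its derivative along the nominal flow is $-\frac1N\|\Pi_T\nabla F\|^2$. I then need a local PL inequality on the manifold,
\[
\|\Pi_{T_{\mathbf s}\Omega_{\mathrm d}}\nabla F(\mathbf s)\|^2\ge 2\rho\,V(\mathbf s),
\]
with $\rho$ arbitrarily close to $\rho_{\mathrm d}$. The difficulty is that, for smooth functions, quadratic growth implies PL only with a degraded constant in general.

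The route I would try is as follows. Use the $C^2$ regularity: the Riemannian Hessian of $F|_{\Omega_{\mathrm d}}$ at $\mathbf s^\ast$ is $\succeq\rho_{\mathrm d}\mathbf I$ on the tangent space, since quadratic growth at a critical point of a $C^2$ function forces this. Continuity of the Hessian then gives restricted strong convexity $\succeq(\rho_{\mathrm d}-\epsilon)$ on a small geodesic ball. This yields both PL with constant $\rho_{\mathrm d}-\epsilon$ and $V\ge\frac{\rho_{\mathrm d}}2\|\mathbf s-\mathbf s^\ast\|^2$. Consequently $V$ decays at rate $2(\rho_{\mathrm d}-\epsilon)/N$, so $\|\mathbf s-\mathbf s^\ast\|$ decays at rate $(\rho_{\mathrm d}-\epsilon)/N$, matching the stated bound $\rho_{\mathrm d}/N$.

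\emph{Step 4: closing the cascade.} The perturbation enters as
\[
\dot V\le -\tfrac{2(\rho_{\mathrm d}-\epsilon)}N V+c\sqrt V\,\|\mathbf y_{\mathrm d}\|.
\]
Setting $W=\sqrt V$ gives a linear differential inequality, $\dot W\le -\frac{\rho_{\mathrm d}-\epsilon}{N}W+c\|\mathbf y_{\mathrm d}\|$, driven by an exponentially decaying input. A comparison lemma then gives decay at any rate below $\min\{(\rho_{\mathrm d}-\epsilon)/N,\alpha'\}$.

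Combining $\|\mathbf x-\mathbf x_{\mathrm d}^\ast\|\le\sqrt N\|\mathbf s-\mathbf s^\ast\|+\|\mathbf n\|$ with Step 1 produces the claimed estimate. Finally, choosing $\delta_{\mathrm{FL}}$ small makes the bound keep the trajectory inside $\mathcal B$ and the chart. A standard continuation argument then removes the a priori assumption that the solution stays in $\mathcal B$.
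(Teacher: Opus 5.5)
Your proof is correct, but it is organized differently from the paper's. Both arguments rest on the same second-order consequence of Assumption~\ref{asm:growth}. At $\mathbf s_{\mathrm d}^{\ast}$, the Hessian of $F$ restricted to $\Omega_{\mathrm d}$, taken in an isometrically normalized chart, is bounded below by $\rho_{\mathrm d}$. In the paper's chart on $\mathbb R^{Nd}$ this bound becomes $\rho_{\mathrm d}/N$, because $\|\mathbf 1_N\otimes\mathbf v\|^2=N\|\mathbf v\|^2$.

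\textbf{The paper's route.} It works in a submersion chart $(\mathbf y_{\mathrm r},\bm\eta)$ normalized so that $\mathbf P_{\mathrm d}(\mathbf 0)^{\top}\mathbf P_{\mathrm d}(\mathbf 0)=\mathbf I$. A Taylor expansion of the gradient turns the Hessian bound into the dissipation inequality $\bm\eta^{\top}\mathbf H_{\mathrm d}(\bm\eta)^{-1}\nabla_{\bm\eta}\widetilde F_{\mathrm d}(\bm\eta)\ge\rho_\eta\|\bm\eta\|^2$. The cascade is then closed with the quadratic composite Lyapunov function $W=\frac{\mu}{2}\|\bm\eta\|^2+V_2(\mathbf y_{\mathrm r})$. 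No PL inequality appears.

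\textbf{Your route.} You use the objective gap $V=F-F(\mathbf s_{\mathrm d}^{\ast})$ on the manifold and derive a local manifold PL inequality with constant $\rho_{\mathrm d}-\epsilon$. You then close the cascade by a comparison argument on $\sqrt V$. This is more intrinsic. It also makes explicit that Assumption~\ref{asm:growth} locally implies the manifold PL condition of Assumption~\ref{asm:manifold-pl-dist}, with constant arbitrarily close to $\rho_{\mathrm d}$. Proposition~\ref{prop:fl-dist} and Theorem~\ref{thm:sp-dist-enlarged-roa} then visibly run on the same mechanism.

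\textbf{What the paper's route buys.} It delivers exactly what is reused later. Its $W$ is a strict Lyapunov function with $c_1\varepsilon_{\mathrm d}^2\le W\le c_2\varepsilon_{\mathrm d}^2$ and $\|\nabla W\|\le c_3\varepsilon_{\mathrm d}$, and it is plugged directly into the singular-perturbation proof of Theorem~\ref{thm:sp-dist}. Your comparison lemma yields only a trajectory bound. To feed that proof, replace it by a composite function $V+\gamma V_y$ with Young's inequality, where $V_y$ is a quadratic-type Lyapunov function of $\dot{\mathbf y}_{\mathrm d}=\mathbf G_{\mathrm d}(\mathbf y_{\mathrm d})$. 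This is what the paper does in the proof of Theorem~\ref{thm:sp-dist-enlarged-roa}. It works here because $V\asymp\|\mathbf s-\mathbf s_{\mathrm d}^{\ast}\|^2$ locally: the lower bound comes from quadratic growth, and the upper bound from $C^2$ smoothness together with criticality of $\mathbf s_{\mathrm d}^{\ast}$.

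\textbf{A step to state explicitly.} Step~4 needs the bound $\|\Pi_{T_{\mathbf s}\Omega_{\mathrm d}}\nabla F(\mathbf s)\|\le c\|\mathbf s-\mathbf s_{\mathrm d}^{\ast}\|\le c'\sqrt V$. It holds because the projected gradient vanishes at $\mathbf s_{\mathrm d}^{\ast}$ and is locally Lipschitz, combined with quadratic growth. This is what produces the $c\sqrt V\,\|\mathbf y_{\mathrm d}\|$ term.
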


\begin{remark}
\label{rem:fl-rate}
As shown in Appendix~\ref{app:fl-dist}, the exponential convergence rate
\(\alpha_{\mathrm{FL}}\) in Proposition~\ref{prop:fl-dist} can be chosen as
\(
    \alpha_{\mathrm{FL}}
    \in
    \left(
        0,
        \min\left\{
            \frac{\rho_{\mathrm d}}{N},
            \alpha_G
        \right\}
    \right),
\)
where \(\alpha_G\) denotes the exponential convergence rate of the assigned
output dynamics \(\dot{\mathbf y}=\mathbf G_{\rm d}(\mathbf y)\). In particular, if the output feedback is chosen as the proportional law \(\mathbf G_{\rm d}(\mathbf y)=-k\mathbf y\), then \(\alpha_G=k\).  It is worth noting
that the value of
\(\alpha_{\mathrm{FL}}\) depends on how close \(\mathbf x(t)\) is to
\(\mathbf x_{\mathrm d}^{\ast}\). When \(\delta_{\mathrm{FL}}>0\) in Proposition \ref{prop:fl-dist} is sufficiently small, the trajectory remains in a
sufficiently small neighborhood of \(\mathbf x_{\mathrm d}^{\ast}\). In this
case, \(\alpha_{\mathrm{FL}}\) can be made arbitrarily close to
\(
    \min\left\{
        \frac{\rho_{\mathrm d}}{N},
        \alpha_G
    \right\}.
\) 
Therefore, overall, the convergence rate of \eqref{eq:ideal-fl-algorithm-dist} is
determined by both the restricted quadratic growth rate of \(F\) and the designed output dynamics.

\end{remark}

\subsection{Distributed Singular-Perturbation Realization}
\label{sec:sp-realization-dist}
 
The ideal FL law in Section~\ref{sec:fl-design-dist} prescribes the closed-loop
output dynamics
\(\dot{\mathbf y}_{\mathrm d}=\mathbf G_{\mathrm d}(\mathbf y_{\mathrm d})\)
through the algebraic relation~\eqref{eq:ideal-fl-relation-dist}. In this section, we first explain the obstruction that prevents it from being implemented in a distributed manner, and
then propose a singular-perturbation (SP) realization that approximately recovers the ideal
behavior while allowing for a fully distributed implementation.

Recall the ideal FL relation~\eqref{eq:ideal-fl-relation-dist}. In this dynamics, \eqref{eq:ideal-fl-relation-dist} must be solved for
\(\mathbf u_{\mathrm d}=\operatorname{col}(\mathbf v,\bm\mu)\) at every state
\(\mathbf x\), subject to \(\mathbf v\in\operatorname{Im}\mathbf L\). However, evaluating the unique admissible solution
\(\mathbf u_{\mathrm d}^{\rm q}(\mathbf x)\) requires global coupling
information and therefore cannot be performed in a simple distributed
manner.
 
To circumvent the explicit inversion, we replace the algebraic
solution of \eqref{eq:ideal-fl-relation-dist} by an auxiliary dynamical system that
drives the input \(\mathbf u_{\mathrm d}\) so that
\(\mathbf M_{\mathrm d}(\mathbf x)\mathbf u_{\mathrm d}\) rapidly tracks the
target \(-\mathbf A_{\mathrm d}(\mathbf x)\nabla f(\mathbf x)-\mathbf G_{\mathrm d}(\mathbf y_{\mathrm d})\). Define the FL {residual}
\[
    \mathbf e_{\rm d}
    :=
    \mathbf M_{\mathrm d}(\mathbf x)\mathbf u_{\mathrm d}
    +
    \mathbf A_{\mathrm d}(\mathbf x)\nabla f(\mathbf x)
    +
    \mathbf G_{\mathrm d}(\mathbf y_{\mathrm d}),
\]
so that \(\mathbf e_{\rm d}=\mathbf 0\) recovers exactly the ideal relation
\eqref{eq:ideal-fl-relation-dist}, and, in view of
\eqref{eq:fl-output-compact-dist}, \(\mathbf e_{\rm d}\) coincides with the
mismatch between the actual output rate \(\dot{\mathbf y}_{\mathrm d}\) and the
assigned one \(\mathbf G_{\mathrm d}(\mathbf y_{\mathrm d})\), i.e.
\(\mathbf e_{\rm d}
=-\dot{\mathbf y}_{\mathrm d}+\mathbf G_{\mathrm d}(\mathbf y_{\mathrm d})\).
We then assign to \(\mathbf u_{\mathrm d}\) the fast gradient-type dynamics
\begin{equation}
\label{eq:fast-u-dist}
    \tau\,\dot{\mathbf u}_{\mathrm d}
    =
    -\mathbf K_0\,\mathbf e_{\rm d}
    =
    -\mathbf K_0
    \big(
        \mathbf M_{\mathrm d}(\mathbf x)\mathbf u_{\mathrm d}
        +\mathbf A_{\mathrm d}(\mathbf x)\nabla f(\mathbf x)
        +\mathbf G_{\mathrm d}(\mathbf y_{\mathrm d})
    \big),
\end{equation}
where \(0<\tau\ll1\) is a singular-perturbation parameter that separates the
time scales, \(\mathbf K_0=\operatorname{blkdiag}(K_{0c}\mathbf I,\,
K_{0h}\mathbf I)\succ0\) is a constant gain, and the initial condition is $\mathbf v(0)\in\operatorname{Im}\mathbf L$. System
\eqref{eq:fast-u-dist} together with the primal flow
\eqref{eq:fl-x-dynamics-dist} forms a singularly perturbed
interconnection
\begin{equation}
\label{eq:fast-algorithm-dist}
\begin{aligned}
    \dot{\mathbf x}
    &=
    -\nabla f(\mathbf x)
    -
    \mathbf v
    -
    \tilde{\mathbf J}^{\mathrm d}(\mathbf x)^{\top}\bm\mu,
    \\
     \tau\,\dot{\mathbf u}_{\mathrm d}
   & =
    -\mathbf K_0
    \big(
        \mathbf M_{\mathrm d}(\mathbf x)\mathbf u_{\mathrm d}
        +\mathbf A_{\mathrm d}(\mathbf x)\nabla f(\mathbf x)
        +\mathbf G_{\mathrm d}(\mathbf y_{\mathrm d})
    \big),
\end{aligned}
\end{equation}
where the slow state is \(\mathbf x\) and the fast state is
\(\mathbf u_{\mathrm d}\), and the initial condition is $\mathbf v(0)\in\operatorname{Im}\mathbf L$.
With \(\mathbf x\) frozen, the boundary-layer system of
\eqref{eq:fast-u-dist} has the solution set of \eqref{eq:ideal-fl-relation-dist} as
its quasi-steady-state manifold. As \(\tau\to0\), the slow dynamics reduces to
the ideal FL algorithm~\eqref{eq:ideal-fl-algorithm-dist}. 

To ensure distributed implementability, we further select
\(\mathbf G_{\mathrm d}=\operatorname{col}(\mathbf G_g,\mathbf G_h)\)
to be block-separable, i.e.,
\(\mathbf G_g(\mathbf y_g)
=\operatorname{col}(\mathbf G_{g,1}(\mathbf y_{g,1}),\ldots,
\mathbf G_{g,N}(\mathbf y_{g,N}))\) and
\(\mathbf G_h(\mathbf y_h)
=\operatorname{col}(\mathbf G_{h,1}(\mathbf y_{h,1}),\ldots,
\mathbf G_{h,N}(\mathbf y_{h,N}))\).
With this structure, \eqref{eq:fast-algorithm-dist} only relies on local and
neighboring information and can therefore be implemented in a distributed
manner.

Furthermore, to avoid the term $\mathbf A_{\mathrm d}(\mathbf x)\nabla f(\mathbf x)$ which involves exchanging $\nabla f_i(\xb_i)$ between nodes, we introduce the
 change of variables
\[
    \mathbf z:=\mathbf u_{\mathrm d}-\mathbf K_p\mathbf y_{\mathrm d},
    \qquad
    \mathbf K_p:=\mathbf K_0/\tau,
\]
and differentiating along \eqref{eq:fast-algorithm-dist} cancels this term and yields
\(\dot{\mathbf z}=-\mathbf K_p\mathbf G_{\mathrm d}(\mathbf y_{\mathrm d})\).
Writing \(\mathbf z=\operatorname{col}(\mathbf z_c,\mathbf z_h)\),
\(K_{pc}:=K_{0c}/\tau\), \(K_{ph}:=K_{0h}/\tau\), and recalling
\(\mathbf y_{\mathrm d}=\operatorname{col}(\mathbf L\mathbf x,
\mathbf h^{\mathrm d}(\mathbf x))\), \(\mathbf u_{\mathrm d}
=\operatorname{col}(\mathbf v,\bm\mu)\), \eqref{eq:fast-algorithm-dist} is equivalent to
\begin{equation}
\label{eq:sp-compact-dist}
    \begin{aligned}
        \dot{\mathbf x}
            &=-\nabla f(\mathbf x)-\mathbf v
              -\tilde{\mathbf J}^{\mathrm d}(\mathbf x)^{\top}\bm\mu,\\
        \mathbf v
            &=K_{pc}\mathbf L\mathbf x+\mathbf z_c,\\
        \bm\mu
            &=K_{ph}\mathbf h^{\mathrm d}(\mathbf x)+\mathbf z_h,\\
        \dot{\mathbf z}_c
            &=-K_{pc}\mathbf G_g(\mathbf L\mathbf x),\\
        \dot{\mathbf z}_h
            &=-K_{ph}\mathbf G_h(\mathbf h^{\mathrm d}(\mathbf x)).
    \end{aligned}
\end{equation}
where the initial condition is $\zb_c(0)\in\operatorname{Im}\mathbf L$.  We refer to \eqref{eq:sp-compact-dist} as the SP realization of the ideal FL dynamics \eqref{eq:ideal-fl-algorithm-dist}.
 
By the block structure of \(\mathbf{G}_{\rm d}\),
\(\mathbf L\), \(\tilde{\mathbf J}^{\mathrm d}\), and
\(\mathbf h^{\mathrm d}\), realization~\eqref{eq:sp-compact-dist} is fully
distributed. Each agent \(i\in\mathcal V\) maintains
\(\mathbf x_i,\mathbf z_{c,i}\in\mathbb R^d\),
\(\mathbf z_{h,i}\in\mathbb R^{r_i}\), and runs
\begin{equation}
\label{eq:node-level-dist}
    \begin{aligned}
        \dot{\mathbf x}_i
            &=-\nabla f_i(\mathbf x_i)-\mathbf v_i
              -\mathbf J_i^{\mathrm d}(\mathbf x_i)^{\top}\bm\mu_i,\\
        \mathbf v_i
            &=K_{pc}\!\sum_{j\in\mathcal N_i}\!a_{ij}(\mathbf x_i-\mathbf x_j)
              +\mathbf z_{c,i},\\
        \bm\mu_i
            &=K_{ph}\mathbf h_i^{\mathrm d}(\mathbf x_i)+\mathbf z_{h,i},\\
        \dot{\mathbf z}_{c,i}
            &=-K_{pc}\mathbf G_{g,i}\!\Big(
               \textstyle\sum_{j\in\mathcal N_i}\!a_{ij}(\mathbf x_i-\mathbf x_j)\Big),\\
        \dot{\mathbf z}_{h,i}
            &=-K_{ph}\mathbf G_{h,i}\big(\mathbf h_i^{\mathrm d}(\mathbf x_i)\big),
    \end{aligned}
\end{equation}
using only its own data and the variables \(\mathbf x_j\) of its neighbors
\(j\in\mathcal N_i\).


The following theorem
states that for sufficiently strong time-scale separation the SP realization
\eqref{eq:sp-compact-dist} preserves the local exponential convergence of the
ideal FL dynamics \eqref{eq:ideal-fl-algorithm-dist}.

\begin{theorem}
\label{thm:sp-dist}
Consider problem \(\mathcal P_{\mathrm d}\). Suppose
Assumptions~\ref{asm:graph}--\ref{asm:reg-dist} hold. Then \eqref{eq:sp-compact-dist} admits a locally unique equilibrium \((\mathbf x_{\mathrm d}^{\ast},\zb^\ast)\), which is locally exponentially stable. Specifically, let
\(\alpha_{\mathrm{FL}}\) be any convergence rate admissible in
Proposition~\ref{prop:fl-dist}. For any
\(\alpha_{\mathrm{SP}}\in(0,\alpha_{\mathrm{FL}})\), there exists
\(\tau^\ast>0\) such that, for every fixed
\(\tau\in(0,\tau^\ast)\), there exist constants
\(C_{\mathrm{SP}}>0\) and \(\delta_{\mathrm{SP}}>0\), such that whenever
\(
    \|\mathbf x(0)-\mathbf x_{\mathrm d}^{\ast}\|
    +
    \|\mathbf z(0)-\mathbf z^\ast\|
    <
    \delta_{\mathrm{SP}},
\)
the corresponding
solution satisfies
\begin{equation}
\label{eq:sp-bound-dist}
\begin{aligned}
    &\|\mathbf x(t)-\mathbf x_{\mathrm d}^{\ast}\|
    +\|\mathbf z(t)-\mathbf z^\ast\|
    +\|\mathbf L\mathbf x(t)\|
    +\|\mathbf h^{\mathrm d}(\mathbf x(t))\|  \\
    &\qquad\le
    C_{\mathrm{SP}}e^{-\alpha_{\mathrm{SP}}t}
    \left(
        \|\mathbf x(0)-\mathbf x_{\mathrm d}^{\ast}\|
        +
        \|\mathbf z(0)-\mathbf z^\ast\|
    \right),
    \qquad t\ge0 .
\end{aligned}
\end{equation}
\end{theorem}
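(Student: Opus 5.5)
The plan is to work in the original coordinates \((\mathbf x,\mathbf u_{\mathrm d})\) of \eqref{eq:fast-algorithm-dist}, which are equivalent to \eqref{eq:sp-compact-dist} through the invertible affine map \(\mathbf z=\mathbf u_{\mathrm d}-\mathbf K_p\mathbf y_{\mathrm d}\). I will then apply a standard composite-Lyapunov singular-perturbation argument, using the ideal FL dynamics of Proposition~\ref{prop:fl-dist} as the reduced (slow) system.

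\emph{Invariance and equilibrium.}
\begin{enumerate}[label=(\roman*)]
\item Since \(\mathbf z_c(0)\in\operatorname{Im}\mathbf L\) and \(\mathbf G_g\) maps into \(\operatorname{Im}\mathbf L\), the subspace \(\mathcal A_{\mathrm d}=\operatorname{Im}\mathbf L\times\mathbb R^r\) is invariant for \(\mathbf u_{\mathrm d}\). All analysis can therefore be restricted to \(\mathcal A_{\mathrm d}\).
\item At an equilibrium, \(\dot{\mathbf z}=\mathbf 0\) forces \(\mathbf G_{\mathrm d}(\mathbf y_{\mathrm d})=\mathbf 0\). Global exponential stability of \(\dot{\mathbf y}=\mathbf G_{\mathrm d}(\mathbf y)\) implies that \(\mathbf 0\) is its only zero, so \(\mathbf y_{\mathrm d}=\mathbf 0\).
\item Stationarity of \(\mathbf u_{\mathrm d}\) then gives \(\mathbf e_{\rm d}=\mathbf 0\), i.e., \(\mathbf u_{\mathrm d}=\mathbf u^{\rm q}_{\mathrm d}(\mathbf x)\). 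Hence \(\mathbf x\) is an equilibrium of the ideal FL dynamics \eqref{eq:ideal-fl-algorithm-dist}.
\item By the local exponential stability in Proposition~\ref{prop:fl-dist}, the only such equilibrium near \(\mathbf x_{\mathrm d}^\ast\) is \(\mathbf x_{\mathrm d}^\ast\) itself.
\item Lemma~\ref{lem:fl-wp-dist} then gives a unique \(\mathbf u^\ast=\mathbf u_{\mathrm d}^{\rm q}(\mathbf x^\ast_{\mathrm d})\), and \(\mathbf z^\ast=\mathbf u^\ast\) because \(\mathbf y_{\mathrm d}=\mathbf 0\) at \(\mathbf x^\ast_{\mathrm d}\).
\end{enumerate}

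\emph{Fast subsystem.} Set \(\mathbf w:=\mathbf u_{\mathrm d}-\mathbf u_{\mathrm d}^{\rm q}(\mathbf x)\in\mathcal A_{\mathrm d}\). Then \(\mathbf e_{\rm d}=\mathbf M_{\mathrm d}(\mathbf x)\mathbf w\), so
\[
\tau\dot{\mathbf w}=-\mathbf K_0\mathbf M_{\mathrm d}(\mathbf x)\mathbf w-\tau\,\tfrac{\partial \mathbf u^{\rm q}_{\mathrm d}}{\partial\mathbf x}\dot{\mathbf x},
\]
and the slow equation reads \(\dot{\mathbf x}=f_{\mathrm{FL}}(\mathbf x)-\mathbf B_{\mathrm d}(\mathbf x)\mathbf w\). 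The key algebraic fact is that \(-\mathbf K_0\mathbf M_{\mathrm d}(\mathbf x)\) restricted to \(\mathcal A_{\mathrm d}\) is Hurwitz. This follows because \(\mathbf K_0\mathbf M_{\mathrm d}=(\mathbf K_0\mathbf A_{\mathrm d})\mathbf B_{\mathrm d}\) has the same nonzero spectrum as
\[
\mathbf B_{\mathrm d}\mathbf K_0\mathbf A_{\mathrm d}=K_{0c}\mathbf L+K_{0h}\tilde{\mathbf J}^{\mathrm d\top}\tilde{\mathbf J}^{\mathrm d}\succeq0 .
\]
So its eigenvalues are real and nonnegative, and Lemma~\ref{lem:fl-wp-dist} (injectivity on \(\mathcal A_{\mathrm d}\)) excludes zero. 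By continuity of \(\mathbf M_{\mathrm d}\) near \(\mathbf x^\ast_{\mathrm d}\), the eigenvalues are uniformly bounded away from zero. I would then take \(\mathbf P\) solving the Lyapunov equation at \(\mathbf x^\ast_{\mathrm d}\), which remains a valid uniform Lyapunov matrix in a small ball. Together with local Lipschitzness (Assumption~\ref{asm:smooth}) and the \(C^1\) dependence of \(\mathbf u^{\rm q}_{\mathrm d}\) (implicit function theorem applied to the bijective restricted \(\mathbf M_{\mathrm d}\)), this yields exponential stability of the boundary layer and the usual interconnection bounds.

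\emph{Composite Lyapunov function and rate.} I would take a converse Lyapunov function \(V_{\mathrm{FL}}\) for the reduced system. This is obtainable from the exponential estimate in Proposition~\ref{prop:fl-dist} with some rate \(\alpha'\in(\alpha_{\mathrm{SP}},\alpha_{\mathrm{FL}})\), in the form \(\dot V_{\mathrm{FL}}\le-2\alpha' V_{\mathrm{FL}}\) with quadratic bounds. The composite function is
\[
\mathcal V=V_{\mathrm{FL}}(\mathbf x)+\mathbf w^\top\mathbf P\mathbf w .
\]
Its derivative is bounded by a quadratic form in \((\|\mathbf x-\mathbf x^\ast_{\mathrm d}\|,\|\mathbf w\|)\) with diagonal terms \(-2\alpha'\) and \(-c/\tau\), and off-diagonal terms \(O(1)\). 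For \(\tau<\tau^\ast\) this gives \(\dot{\mathcal V}\le-2\alpha_{\mathrm{SP}}\mathcal V\) (Khalil-type Theorem~11.4 argument), where the fast mode contributes a rate of order \(1/\tau\) that dominates.

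\emph{Final estimate.} Converting back, I use \(\|\mathbf z-\mathbf z^\ast\|\lesssim\|\mathbf w\|+\|\mathbf x-\mathbf x^\ast_{\mathrm d}\|\) (since \(\mathbf K_p\) is fixed once \(\tau\) is fixed). The outputs satisfy \(\|\mathbf L\mathbf x\|+\|\mathbf h^{\mathrm d}(\mathbf x)\|\lesssim\|\mathbf x-\mathbf x^\ast_{\mathrm d}\|\) by Lipschitzness, which gives \eqref{eq:sp-bound-dist}. Smallness of \(\delta_{\mathrm{SP}}\) keeps trajectories in the ball where all local constants are valid.

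The main obstacle is obtaining a uniform Lyapunov certificate for the nonsymmetric, state-dependent fast matrix on the subspace \(\mathcal A_{\mathrm d}\), while tracking constants carefully enough to show that the slow rate \(\alpha'\) is degraded only by \(O(\tau)\).
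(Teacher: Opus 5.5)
Your proposal is correct and follows essentially the same route as the paper. Both arguments reduce equilibria to those of the ideal FL dynamics via Lemma~\ref{lem:fl-wp-dist} and isolatedness, use the same fast error \(\mathbf u_{\mathrm d}-\mathbf u_{\mathrm d}^{\rm q}(\mathbf x)\), and close with a composite Lyapunov function, Young's inequality and an explicit \(\tau^\ast\).

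The differences are only technical. The paper certifies the boundary layer through \(\hat{\mathbf w}=\mathbf B_{\mathrm d}(\mathbf x)\hat{\mathbf u}\), whose frozen dynamics is directly \(d\hat{\mathbf w}/ds=-\mathbf N_{\mathrm d}(\mathbf x)\hat{\mathbf w}\). Here \(\mathbf N_{\mathrm d}=K_{0c}\mathbf L+K_{0h}\tilde{\mathbf J}^{\mathrm d}(\mathbf x)^{\top}\tilde{\mathbf J}^{\mathrm d}(\mathbf x)\) is the same symmetric matrix you use only spectrally, so \(\tfrac12\|\hat{\mathbf w}\|^2\) replaces your frozen Lyapunov matrix \(\mathbf P\). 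The paper also reuses the explicit \(W\) from the proof of Proposition~\ref{prop:fl-dist} rather than a converse theorem. That matters: the textbook converse Lyapunov construction does not by itself give \(\dot V\le-2\alpha'V\) with \(\alpha'\) close to \(\alpha_{\mathrm{FL}}\). You would need a weighted version instead, such as \(V(\mathbf x)=\int_0^{T}e^{2\alpha' s}\|\varphi(s,\mathbf x)-\mathbf x_{\mathrm d}^{\ast}\|^2\,ds\) with \(T\) large.
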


\section{Coupled Equality Constraints}
\label{sec:coupled-equality}
In this section, we extend the preceding design to distributed optimization
with coupled equality constraints,  $\mathcal{P}_{\rm c}$. By introducing auxiliary variables, the
coupled constraint is lifted into a distributed form, to which the
feedback-linearization and singular-perturbation designs can be applied.

\subsection{Lifted Feedback-Linearization Design}
\label{sec:fl-design-coupled}

Let 
\(\mathbf L_q:=\mathbf L_{\mathcal G}\otimes\mathbf I_q\), and define the
stacked constraint map
\(
    \mathbf h^{\mathrm c}(\mathbf x)
    :=\operatorname{col}
    (\mathbf h_1^{\mathrm c}(\mathbf x_1),\ldots,
    \mathbf h_N^{\mathrm c}(\mathbf x_N))\in\mathbb R^{Nq}
\).
Introduce an auxiliary variable
\(\bm\zeta:=\operatorname{col}(\bm\zeta_1,\ldots,\bm\zeta_N)\in\mathbb R^{Nq}\),
one block \(\bm\zeta_i\in\mathbb R^{q}\) per agent, and consider the lifted
problem
\begin{equation}
\label{eq:lifted-problem-coupled}
    \begin{aligned}
    \min_{\mathbf x,\bm\zeta}\quad
        & f(\mathbf x):=\sum_{i=1}^N f_i(\mathbf x_i),\\
    \mathrm{s.t.}\quad
        & \mathbf L\mathbf x=\mathbf 0,\\
        & \mathbf h^{\mathrm c}(\mathbf x)+\mathbf L_q\bm\zeta=\mathbf 0.
    \end{aligned}
\end{equation}
The problem \(\mathcal P_{\mathrm c}\) can be solved if we can solve \eqref{eq:lifted-problem-coupled}, as stated in the following lemma. Similar results can be found in the literature, e.g., in
Lemma~1 of \citet{CCDC}.
\begin{lemma}
\label{lem:lift-equiv}
A pair \((\mathbf x,\bm\zeta)\) is a local minimizer of
\eqref{eq:lifted-problem-coupled} if and only if \(\mathbf x\) is a local
minimizer of \(\mathcal P_{\mathrm c}\) and
\(\mathbf h^{\mathrm c}(\mathbf x)+\mathbf L_q\bm\zeta=\mathbf 0\).
\end{lemma}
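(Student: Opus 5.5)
The plan is to reduce everything to a characterization of the lifted feasible set. The key identity is that, since \(\mathcal G\) is connected (Assumption~\ref{asm:graph}) and \(\mathbf L_q\) is symmetric,
\[
\operatorname{Im}\mathbf L_q=(\ker\mathbf L_q)^\perp=\ker(\mathbf 1_N^\top\otimes\mathbf I_q).
\]
Hence, for a given \(\mathbf x\), there exists \(\bm\zeta\) with \(\mathbf h^{\mathrm c}(\mathbf x)+\mathbf L_q\bm\zeta=\mathbf 0\) if and only if \(\sum_{i=1}^N\mathbf h_i^{\mathrm c}(\mathbf x_i)=\mathbf 0\). Together with \(\mathbf L\mathbf x=\mathbf 0\iff\mathbf x_1=\cdots=\mathbf x_N\), this gives the projection identity
\[
\{\mathbf x:\exists\bm\zeta,\ (\mathbf x,\bm\zeta)\ \text{feasible for \eqref{eq:lifted-problem-coupled}}\}=\mathcal X_{\mathrm c}.
\]
The lifted objective depends only on \(\mathbf x\). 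The fiber of feasible \(\bm\zeta\) over a feasible \(\mathbf x\) is the affine set \(\bm\zeta_0(\mathbf x)+\ker\mathbf L_q\), where \(\bm\zeta_0(\mathbf x):=-\mathbf L_q^{\dagger}\mathbf h^{\mathrm c}(\mathbf x)\) is the minimum-norm solution.

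\emph{Direction (\(\Leftarrow\)).} Let \(\mathbf x\) be a local minimizer of \(\mathcal P_{\mathrm c}\) with radius \(r\), and let \(\bm\zeta\) be feasible. For any lifted-feasible \((\mathbf x',\bm\zeta')\) within distance \(r\) of \((\mathbf x,\bm\zeta)\), we have \(\|\mathbf x'-\mathbf x\|<r\), and \(\mathbf x'\) is feasible for \(\mathcal P_{\mathrm c}\) by the projection identity. Therefore \(f(\mathbf x')\ge f(\mathbf x)\). This direction is immediate.

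\emph{Direction (\(\Rightarrow\)).} Let \((\mathbf x,\bm\zeta)\) be a local minimizer with radius \(r\); feasibility of \(\mathbf x\) and the equation \(\mathbf h^{\mathrm c}(\mathbf x)+\mathbf L_q\bm\zeta=\mathbf 0\) are immediate. The task is to show that for every \(\mathcal P_{\mathrm c}\)-feasible \(\mathbf x'\) close to \(\mathbf x\) there is a feasible \(\bm\zeta'\) close to \(\bm\zeta\), so that \(f(\mathbf x')\ge f(\mathbf x)\). I will choose
\[
\bm\zeta':=\bm\zeta-\mathbf L_q^{\dagger}\big(\mathbf h^{\mathrm c}(\mathbf x')-\mathbf h^{\mathrm c}(\mathbf x)\big).
\]
This choice is feasible because \(\mathbf h^{\mathrm c}(\mathbf x')-\mathbf h^{\mathrm c}(\mathbf x)\in\operatorname{Im}\mathbf L_q\) (both sum to zero) and \(\mathbf L_q\mathbf L_q^{\dagger}\) is the projector onto \(\operatorname{Im}\mathbf L_q\). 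Hence
\[
\mathbf h^{\mathrm c}(\mathbf x')+\mathbf L_q\bm\zeta'=\mathbf h^{\mathrm c}(\mathbf x')-\mathbf h^{\mathrm c}(\mathbf x')+\mathbf h^{\mathrm c}(\mathbf x)+\mathbf L_q\bm\zeta=\mathbf 0.
\]
For the distance bound, \(\|\bm\zeta'-\bm\zeta\|\le\|\mathbf L_q^{\dagger}\|\,\|\mathbf h^{\mathrm c}(\mathbf x')-\mathbf h^{\mathrm c}(\mathbf x)\|\), which tends to zero as \(\mathbf x'\to\mathbf x\) by continuity of \(\mathbf h^{\mathrm c}\) (Assumption~\ref{asm:smooth}). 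Choosing \(r'\le r/2\) so that \(\|\mathbf x'-\mathbf x\|<r'\) forces \(\|\bm\zeta'-\bm\zeta\|<r/2\) places \((\mathbf x',\bm\zeta')\) in the \(r\)-ball. This establishes local optimality of \(\mathbf x\) for \(\mathcal P_{\mathrm c}\).

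The main point needing care is exactly this continuous lifting of nearby feasible \(\mathbf x'\) to nearby \(\bm\zeta'\), which requires a bounded right-inverse of \(\mathbf L_q\) on its image. The pseudoinverse formula above supplies it, so I expect no genuine obstruction. Continuity of \(\mathbf h^{\mathrm c}\) is the only regularity used, and Assumption~\ref{asm:reg-coupled} is not needed.
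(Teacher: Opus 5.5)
Your proof is correct and follows the same route as the paper. The identity \(\operatorname{Im}\mathbf L_q=\ker(\mathbf 1_N^\top\otimes\mathbf I_q)\) identifies the projection of the lifted feasible set with \(\mathcal X_{\mathrm c}\), and the \(\bm\zeta\)-independence of the objective does the rest. Your explicit lifting \(\bm\zeta'=\bm\zeta-\mathbf L_q^{\dagger}(\mathbf h^{\mathrm c}(\mathbf x')-\mathbf h^{\mathrm c}(\mathbf x))\) in the (\(\Rightarrow\)) direction supplies a detail the paper's one-line remark leaves implicit: nearby feasible \(\mathbf x'\) admit nearby feasible \(\bm\zeta'\).
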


\begin{proof}
Under Assumption~\ref{asm:graph}, one has
\(\operatorname{Im}\mathbf L_q
=\ker(\mathbf 1_N^\top\otimes\mathbf I_q)\).
Therefore,
\(\sum_{i=1}^N\mathbf h_i^{\mathrm c}(\mathbf x_i)=\mathbf 0\)
if and only if
\((\mathbf 1_N^\top\otimes\mathbf I_q)
\mathbf h^{\mathrm c}(\mathbf x)=\mathbf 0\),
which is equivalent to
\(\mathbf h^{\mathrm c}(\mathbf x)\in\operatorname{Im}\mathbf L_q\).
Hence, the coupled constraint holds if and only if there exists
\(\bm\zeta\in\mathbb R^{Nq}\) such that
\(\mathbf h^{\mathrm c}(\mathbf x)+\mathbf L_q\bm\zeta=\mathbf 0\).
Moreover, since the objective function is independent
of \(\bm\zeta\), the local optimality of \(\mathbf x\) is therefore equivalent
to that of \((\mathbf x,\bm\zeta)\).
\end{proof}

The Lagrangian associated with~\eqref{eq:lifted-problem-coupled} is
\[
    \mathcal L_{\mathrm c}
    (\mathbf x,\bm\zeta,\bm\lambda,\bm\mu)
    =
    f(\mathbf x)
    +
    \bm\lambda^{\top}\mathbf L\mathbf x
    +
    \bm\mu^{\top}
    \big(\mathbf h^{\mathrm c}(\mathbf x)+\mathbf L_q\bm\zeta\big),
\]
where \(\bm\lambda\in\mathbb R^{Nd}\) is the multiplier associated with the
consensus constraint and \(\bm\mu\in\mathbb R^{Nq}\) is the multiplier
associated with the lifted coupling constraint. As in
Section~\ref{sec:fl-design-dist}, we let \(\mathbf v:=\mathbf L\bm\lambda
\in\operatorname{Im}\mathbf L\) to avoid \(\mathbf L^2\) terms. The gradient
descent dynamics of \(\mathcal L_{\mathrm c}\) with respect to the primal
variables \((\mathbf x,\bm\zeta)\) then reads
\begin{equation}
\label{eq:primal-flow-coupled}
    \begin{aligned}
    \dot{\mathbf x}
        &=-\nabla f(\mathbf x)-\mathbf v-\tilde{\mathbf J}^{\mathrm c}(\mathbf x)^\top\bm\mu,\\
    \dot{\bm\zeta}
        &=-\mathbf L_q\bm\mu.
    \end{aligned}
\end{equation}
where $\tilde{\mathbf J}^{\mathrm c}(\mathbf x):=\operatorname{blkdiag}(\mathbf J_1^{\mathrm c}(\mathbf x_1),\ldots,\mathbf J_N^{\mathrm c}(\mathbf x_N))$ the initial condition is taken as
\(\bm\zeta(0)\in\operatorname{Im}\mathbf L_q\), which is imposed only for
convenience, as the component of \(\bm\zeta\) along
\(\ker\mathbf L_q=\mathbf 1_N\otimes\mathbb R^q\) is uncontrollable and does not
affect the remaining dynamics.
Defining
\(\bm\chi:=\operatorname{col}(\mathbf x,\bm\zeta)\in\mathbb R^{N(d+q)}\) and the
inputs \(\mathbf u_{\mathrm c}:=\operatorname{col}(\mathbf v,\bm\mu)\). Moreover,
we take the output to be the constraint violation
\begin{equation}
\label{eq:fl-output-coupled}
    \mathbf y_{\mathrm c}
    =
    \operatorname{col}(\mathbf y_g,\mathbf y_h)
    :=
    \operatorname{col}
    \big(
        \mathbf L\mathbf x,\,
        \mathbf h^{\mathrm c}(\mathbf x)+\mathbf L_q\bm\zeta
    \big)
    \in\operatorname{Im}\mathbf L\times\mathbb R^{Nq},
\end{equation}
where \(\mathbf y_g\) measures the consensus violation and \(\mathbf y_h\)
measures the violation of the lifted coupling constraint. Differentiating
\eqref{eq:fl-output-coupled} along \eqref{eq:primal-flow-coupled} gives
\begin{equation}
\label{eq:fl-output-compact-coupled}
    \dot{\mathbf y}_{\mathrm c}
    =
    -\mathbf A_{\mathrm c}(\bm\chi)\nabla\bar f(\bm\chi)
    -\mathbf M_{\mathrm c}(\bm\chi)\mathbf u_{\mathrm c},
\end{equation}
where \(\bar f(\bm\chi):=f(\mathbf x)\), so that
\(\nabla\bar f(\bm\chi)=\operatorname{col}(\nabla f(\mathbf x),\mathbf 0)\),
\(
    \mathbf A_{\mathrm c}(\bm\chi)
    :=
    \begin{bmatrix}
        \mathbf L & \mathbf 0\\
        \tilde{\mathbf J}^{\mathrm c}(\mathbf x) & \mathbf L_q
    \end{bmatrix}
\), $ \mathbf B_{\mathrm c}(\bm\chi)
    :=
    \begin{bmatrix}
        \mathbf I_{Nd} & \tilde{\mathbf J}^{\mathrm c}(\mathbf x)^\top\\
        \mathbf 0 & \mathbf L_q
    \end{bmatrix}$,
 and
\[
    \mathbf M_{\mathrm c}(\bm\chi)
    :=
    \mathbf A_{\mathrm c}(\bm\chi)\mathbf B_{\mathrm c}(\bm\chi)
    =
    \begin{bmatrix}
        \mathbf L
        &
        \mathbf L\tilde{\mathbf J}^{\mathrm c}(\mathbf x)^\top\\
        \tilde{\mathbf J}^{\mathrm c}(\mathbf x)
        &
        \tilde{\mathbf J}^{\mathrm c}(\mathbf x)\tilde{\mathbf J}^{\mathrm c}(\mathbf x)^\top+\mathbf L_q^2
    \end{bmatrix}.
\]

Let \(\mathbf G_{\mathrm c}\) be the desired output vector field, i.e.,
\(\dot{\mathbf y}_{\mathrm c}
=\mathbf G_{\mathrm c}(\mathbf y_{\mathrm c})\).
Similar to \(\mathbf G_{\mathrm d}\), we select
\(\mathbf G_{\mathrm c}:
\operatorname{Im}\mathbf L\times\mathbb R^{Nq}
\to\operatorname{Im}\mathbf L\times\mathbb R^{Nq}\)
to be continuously differentiable and to satisfy
\(\mathbf G_{\mathrm c}(\mathbf 0)=\mathbf 0\). Moreover, the assigned output
dynamics is designed to be globally exponentially stable at
\(\mathbf y_{\mathrm c}=\mathbf 0\) with
convergence rate \(\alpha_G>0\).

The ideal
feedback-linearizing input is then required to satisfy
\begin{equation}
\label{eq:ideal-fl-relation-coupled}
    \mathbf M_{\mathrm c}(\bm\chi)\mathbf u_{\mathrm c}
    +
    \mathbf A_{\mathrm c}(\bm\chi)\nabla\bar f(\bm\chi)
    +
    \mathbf G_{\mathrm c}(\mathbf y_{\mathrm c})
    =
    \mathbf 0,
    \qquad
    \mathbf v\in\operatorname{Im}\mathbf L.
\end{equation}
Defining the zero-output manifold of
\eqref{eq:ideal-fl-algorithm-coupled} as
\[
    \mathcal Z_{\mathrm c}
    :=
    \{(\mathbf x,\bm\zeta):\mathbf L\mathbf x=\mathbf 0,\ 
    \mathbf h^{\mathrm c}(\mathbf x)+\mathbf L_q\bm\zeta=\mathbf 0\},
\] 
\eqref{eq:ideal-fl-relation-coupled} always admits a locally unique solution close to the zero-output manifold, given by the following lemma. 
\begin{lemma}
\label{lem:fl-wp-coupled}
Suppose Assumptions~\ref{asm:graph} and \ref{asm:reg-coupled} hold. Then, for every \(\bm\chi\) sufficiently close to
\(\mathcal Z_{\mathrm c}\cap(\mathbf 1_N\otimes\mathcal U_{\mathrm c}
\times\mathbb R^{Nq})\), the algebraic relation
\eqref{eq:ideal-fl-relation-coupled} admits a unique solution
\(\mathbf u_{\mathrm c}=\operatorname{col}(\mathbf v,\bm\mu)\) satisfying
\(\mathbf v\in\operatorname{Im}\mathbf L\).
\end{lemma}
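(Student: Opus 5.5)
The plan mirrors the proof of Lemma~\ref{lem:fl-wp-dist}. We show that \(\mathbf M_{\mathrm c}(\bm\chi)\), restricted to \(\mathcal A_{\mathrm c}:=\operatorname{Im}\mathbf L\times\mathbb R^{Nq}\), is an injective linear map of \(\mathcal A_{\mathrm c}\) into itself. We then check that the right-hand side \(-\mathbf A_{\mathrm c}(\bm\chi)\nabla\bar f(\bm\chi)-\mathbf G_{\mathrm c}(\mathbf y_{\mathrm c})\) lies in \(\mathcal A_{\mathrm c}\).

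\textbf{Invariance and right-hand side.} The first block of \(\mathbf M_{\mathrm c}\mathbf u_{\mathrm c}\) is \(\mathbf L(\mathbf v+\tilde{\mathbf J}^{\mathrm c}(\mathbf x)^\top\bm\mu)\in\operatorname{Im}\mathbf L\), so \(\mathbf M_{\mathrm c}\) maps \(\mathcal A_{\mathrm c}\) into itself. The first block of the right-hand side is \(-\mathbf L\nabla f(\mathbf x)-\mathbf G_g\in\operatorname{Im}\mathbf L\), since \(\mathbf G_{\mathrm c}\) takes values in \(\mathcal A_{\mathrm c}\).

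\textbf{Injectivity.} Take \(\bar{\mathbf u}=\operatorname{col}(\bar{\mathbf v},\bar{\bm\mu})\in\mathcal A_{\mathrm c}\) with \(\mathbf M_{\mathrm c}\bar{\mathbf u}=\mathbf 0\), and set \(\bar{\mathbf w}:=\mathbf B_{\mathrm c}\bar{\mathbf u}=\operatorname{col}(\bar{\mathbf v}+\tilde{\mathbf J}^{\mathrm c\top}\bar{\bm\mu},\,\mathbf L_q\bar{\bm\mu})\). Since \(\mathbf A_{\mathrm c}\bar{\mathbf w}=\mathbf 0\), we have \(\bar{\mathbf w}\in\ker\mathbf A_{\mathrm c}\). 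Because \(\bar{\mathbf v}\in\operatorname{Im}\mathbf L\), we also have \(\bar{\mathbf w}\in\operatorname{Im}\mathbf A_{\mathrm c}^\top\); note that \(\mathbf B_{\mathrm c}\) has the form of \(\mathbf A_{\mathrm c}^\top\) with the \(\mathbf L\) block replaced by \(\mathbf I\), restricted to inputs with \(\bar{\mathbf v}\in\operatorname{Im}\mathbf L\). Concretely, writing \(\bar{\mathbf v}=\mathbf L\bm\lambda\) gives \(\bar{\mathbf w}=\mathbf A_{\mathrm c}^\top\operatorname{col}(\bm\lambda,\bar{\bm\mu})\). Since \(\ker\mathbf A_{\mathrm c}\perp\operatorname{Im}\mathbf A_{\mathrm c}^\top\), it follows that \(\bar{\mathbf w}=\mathbf 0\). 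Hence \(\mathbf L_q\bar{\bm\mu}=\mathbf 0\), i.e.\ \(\bar{\bm\mu}=\mathbf 1_N\otimes\bm\eta\) for some \(\bm\eta\in\mathbb R^q\), and \(\bar{\mathbf v}=-\tilde{\mathbf J}^{\mathrm c}(\mathbf x)^\top(\mathbf 1_N\otimes\bm\eta)=-\operatorname{col}(\mathbf J_i^{\mathrm c}(\mathbf x_i)^\top\bm\eta)\). Premultiplying by \(\mathbf 1_N^\top\otimes\mathbf I_d\) annihilates \(\bar{\mathbf v}\in\operatorname{Im}\mathbf L=\ker(\mathbf 1_N^\top\otimes\mathbf I_d)\). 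This yields \(\big(\sum_i\mathbf J_i^{\mathrm c}(\mathbf x_i)\big)^\top\bm\eta=\mathbf 0\).

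\textbf{Main obstacle: the rank condition off the manifold.} Assumption~\ref{asm:reg-coupled} gives \(\operatorname{rank}\sum_i\mathbf J_i^{\mathrm c}(\mathbf x_i)=q\) only on \(\mathcal X_{\mathrm c}\cap(\mathbf 1_N\otimes\mathcal U_{\mathrm c})\). We must transfer it to nearby \(\mathbf x\), which need not be in consensus. The \(\bm\zeta\)-component is irrelevant here, since \(\mathbf M_{\mathrm c}\) does not depend on \(\bm\zeta\). Full row rank is equivalent to \(\det\big(\bar{\mathbf J}\bar{\mathbf J}^\top\big)>0\) with \(\bar{\mathbf J}(\mathbf x):=\sum_i\mathbf J_i^{\mathrm c}(\mathbf x_i)\). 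This determinant is continuous in \(\mathbf x\) by Assumption~\ref{asm:smooth}, and it is positive at points of the manifold. Hence it stays positive on an open neighborhood of \(\mathcal X_{\mathrm c}\cap(\mathbf 1_N\otimes\mathcal U_{\mathrm c})\), which is what ``sufficiently close'' means. On this neighborhood the identity above gives \(\bm\eta=\mathbf 0\), so \(\bar{\bm\mu}=\mathbf 0\) and \(\bar{\mathbf v}=\mathbf 0\).

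\textbf{Conclusion.} An injective linear map of the finite-dimensional space \(\mathcal A_{\mathrm c}\) into itself is bijective. Combined with the membership of the right-hand side in \(\mathcal A_{\mathrm c}\), this gives existence and uniqueness of the solution \(\mathbf u_{\mathrm c}\) of \eqref{eq:ideal-fl-relation-coupled} with \(\mathbf v\in\operatorname{Im}\mathbf L\).
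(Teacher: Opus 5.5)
Your proof is correct and takes essentially the same approach as the paper. The paper also shows that $\bar{\mathbf w}=\mathbf B_{\mathrm c}\bar{\mathbf u}$ lies in both $\ker\mathbf A_{\mathrm c}$ and $\operatorname{Im}\mathbf A_{\mathrm c}^\top$, and then uses $\ker\mathbf L_q=\mathbf 1_N\otimes\mathbb R^q$ and the full row rank of $\sum_i\mathbf J_i^{\mathrm c}$ to get injectivity, hence bijectivity, on $\mathcal A_{\mathrm c}$; it simply isolates that last step as a preliminary trivial-intersection claim. Your continuity argument for $\det(\bar{\mathbf J}\bar{\mathbf J}^\top)$ spells out how Assumption~\ref{asm:reg-coupled} extends to nearby non-consensus points, a step the paper uses without comment.
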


\begin{proof}
We first show that
\(
\operatorname{Im}
\begin{bmatrix}
\tilde{\mathbf J}^{\mathrm c}(\mathbf x)^\top\\
\mathbf L_q
\end{bmatrix}
\cap
\operatorname{Im}
\begin{bmatrix}
\mathbf L\\
\mathbf 0
\end{bmatrix}
=
\{\mathbf 0\}
\).
Suppose that
\(
\operatorname{col}
\big(
\tilde{\mathbf J}^{\mathrm c}(\mathbf x)^\top\bm\alpha,
\mathbf L_q\bm\alpha
\big)
\in
\operatorname{Im}\operatorname{col}(\mathbf L,\mathbf 0)
\)
for some
\(
\bm\alpha
=
\operatorname{col}(\bm\alpha_1,\ldots,\bm\alpha_N)
\).
Then
\(
\mathbf L_q\bm\alpha=\mathbf 0
\),
which implies
\(
\bm\alpha=\mathbf 1_N\otimes\bm\beta
\)
for some
\(
\bm\beta\in\mathbb R^q
\).
Moreover,
\(
\tilde{\mathbf J}^{\mathrm c}(\mathbf x)^\top\bm\alpha
\in\operatorname{Im}\mathbf L
\).
Since
\(
\operatorname{Im}\mathbf L
=
\ker(\mathbf 1_N^\top\otimes\mathbf I_d)
\),
premultiplication by
\(
\mathbf 1_N^\top\otimes\mathbf I_d
\)
gives
\(
\sum_{i=1}^{N}
\mathbf J_i^{\mathrm c}(\mathbf x_i)^\top\bm\beta
=
\mathbf 0
\).
By Assumption~\ref{asm:reg-coupled}, the matrix
\(
\sum_{i=1}^{N}\mathbf J_i^{\mathrm c}(\mathbf x_i)
\)
has full row rank. Hence its transpose has a trivial kernel, which implies
\(
\bm\beta=\mathbf 0
\),
and consequently
\(
\bm\alpha=\mathbf 0
\).

Let
\(
\mathcal A_{\mathrm c}
:=
\operatorname{Im}\mathbf L\times\mathbb R^{Nq}
\).
We next prove that the restriction of
\(
\mathbf M_{\mathrm c}(\bm\chi)
\)
to
\(
\mathcal A_{\mathrm c}
\)
is injective. Let
\(
\bar{\mathbf u}_{\mathrm c}
=
\operatorname{col}(\bar{\mathbf v},\bar{\bm\mu})
\in\mathcal A_{\mathrm c}
\)
satisfy
\(
\mathbf M_{\mathrm c}(\bm\chi)\bar{\mathbf u}_{\mathrm c}
=
\mathbf 0
\).
Define
\(
\bar{\mathbf w}
=
\mathbf B_{\mathrm c}(\bm\chi)\bar{\mathbf u}_{\mathrm c}
=
\operatorname{col}
\big(
\bar{\mathbf v}
+
\tilde{\mathbf J}^{\mathrm c}(\mathbf x)^\top\bar{\bm\mu},
\mathbf L_q\bar{\bm\mu}
\big)
\).
Since
\(
\mathbf M_{\mathrm c}
=
\mathbf A_{\mathrm c}\mathbf B_{\mathrm c}
\),
we have
\(
\mathbf A_{\mathrm c}(\bm\chi)\bar{\mathbf w}
=
\mathbf 0
\).
Thus,
\(
\bar{\mathbf w}
\in
\ker\mathbf A_{\mathrm c}(\bm\chi)
\).
On the other hand, because
\(
\bar{\mathbf v}\in\operatorname{Im}\mathbf L
\),
there exists
\(
\bm\xi
\)
such that
\(
\bar{\mathbf v}=\mathbf L\bm\xi
\).
Hence
\(
\bar{\mathbf w}
=
\mathbf A_{\mathrm c}(\bm\chi)^\top
\operatorname{col}(\bm\xi,\bar{\bm\mu})
\),
and therefore
\(
\bar{\mathbf w}
\in
\operatorname{Im}\mathbf A_{\mathrm c}(\bm\chi)^\top
=
(\ker\mathbf A_{\mathrm c}(\bm\chi))^\perp
\).
Consequently, \(\bar{\mathbf w}\) belongs to a subspace and its orthogonal
complement, and hence
\(
\bar{\mathbf w}=\mathbf 0
\).
It follows that
\(
\bar{\mathbf v}
+
\tilde{\mathbf J}^{\mathrm c}(\mathbf x)^\top\bar{\bm\mu}
=
\mathbf 0
\)
and
\(
\mathbf L_q\bar{\bm\mu}
=
\mathbf 0
\).
Therefore,
\(
\operatorname{col}
\big(
\tilde{\mathbf J}^{\mathrm c}(\mathbf x)^\top\bar{\bm\mu},
\mathbf L_q\bar{\bm\mu}
\big)
=
\operatorname{col}(-\bar{\mathbf v},\mathbf 0)
\in
\operatorname{Im}\operatorname{col}(\mathbf L,\mathbf 0)
\).
By
\(
\operatorname{Im}
\begin{bmatrix}
\tilde{\mathbf J}^{\mathrm c}(\mathbf x)^\top\\
\mathbf L_q
\end{bmatrix}
\cap
\operatorname{Im}
\begin{bmatrix}
\mathbf L\\
\mathbf 0
\end{bmatrix}
=
\{\mathbf 0\}
\)
established above,
\(
\bar{\bm\mu}=\mathbf 0
\),
and consequently
\(
\bar{\mathbf v}=\mathbf 0
\).
Hence
\(
\bar{\mathbf u}_{\mathrm c}=\mathbf 0
\),
proving injectivity.

Moreover,
\(
\mathbf M_{\mathrm c}(\bm\chi)
\)
maps
\(
\mathcal A_{\mathrm c}
\)
into itself, because the first block of
\(
\mathbf M_{\mathrm c}(\bm\chi)\mathbf u_{\mathrm c}
\)
belongs to
\(
\operatorname{Im}\mathbf L
\).
Since
\(
\mathcal A_{\mathrm c}
\)
is finite-dimensional, the restriction of
\(
\mathbf M_{\mathrm c}(\bm\chi)
\)
to
\(
\mathcal A_{\mathrm c}
\)
is therefore bijective.

Finally,
\(
-\mathbf A_{\mathrm c}(\bm\chi)\nabla\bar f(\bm\chi)
-\mathbf G_{\mathrm c}(\mathbf y_{\mathrm c})
\in\mathcal A_{\mathrm c}
\),
because its first block belongs to
\(
\operatorname{Im}\mathbf L
\).
Thus, \eqref{eq:ideal-fl-relation-coupled} admits a unique solution
\(
\mathbf u_{\mathrm c}
=
\operatorname{col}(\mathbf v,\bm\mu)
\in\mathcal A_{\mathrm c}
\),
which completes the proof.
\end{proof}

By Lemma~\ref{lem:fl-wp-coupled}, the admissible solution of
\eqref{eq:ideal-fl-relation-coupled} is locally unique. We denote this
solution by \(\mathbf u_{\mathrm c}^{\rm q}(\bm\chi)\).
The corresponding ideal FL algorithm is
\begin{equation}
\label{eq:ideal-fl-algorithm-coupled}
    \begin{aligned}
    \dot{\bm\chi}
        &=
        -\nabla\bar f(\bm\chi)
        -\mathbf B_{\mathrm c}(\bm\chi)\mathbf u_{\mathrm c},\\
 \mathbf u_{\mathrm c}
&=
\mathbf u_{\mathrm c}^{\rm q}(\bm\chi),
    \end{aligned}
\end{equation}
with \(\mathbf u_{\mathrm c}=\operatorname{col}(\mathbf v,\bm\mu)\),
\(\mathbf y_{\mathrm c}=\operatorname{col}(\mathbf L\mathbf x,
\mathbf h^{\mathrm c}(\mathbf x)+\mathbf L_q\bm\zeta)\), and initial condition
\(\bm\zeta(0)\in\operatorname{Im}\mathbf L_q\).

Similar to the analysis for \eqref{eq:ideal-fl-algorithm-dist} in Section~\ref{sec:fl-design-dist}, under \eqref{eq:ideal-fl-relation-coupled}, the closed-loop output dynamics is exactly
\(\dot{\mathbf y}_{\mathrm c}=\mathbf G_{\mathrm c}(\mathbf y_{\mathrm c})\). Moreover, the zero-output manifold is locally invariant, and
the state \(\mathbf x(t)\) locally converges
to \(
\mathbf x_{\mathrm c}^{\ast}\).
The following proposition summarizes the convergence of the ideal FL dynamics \eqref{eq:ideal-fl-algorithm-coupled} for
\(\mathcal P_{\mathrm c}\).

\begin{proposition}
\label{prop:fl-coupled}
Consider problem \(\mathcal P_{\mathrm c}\). Suppose
Assumptions~\ref{asm:graph}--\ref{asm:growth} and
\ref{asm:reg-coupled} hold. Then
\eqref{eq:ideal-fl-algorithm-coupled} admits a locally unique equilibrium
\((\mathbf x_{\mathrm c}^{\ast},\bm\zeta^\ast)\), which is locally
exponentially stable. Specifically, there exist constants
\(C_{\mathrm{FL}}>0\), \(\alpha_{\mathrm{FL}}>0\), and
\(\delta_{\mathrm{FL}}>0\) such that, for every initial condition satisfying
\(
    \|\mathbf x(0)-\mathbf x_{\mathrm c}^{\ast}\|
    +\|\mathbf h^{\mathrm c}(\mathbf x(0))+\mathbf L_q\bm\zeta(0)\|
    <\delta_{\mathrm{FL}},
\)
the corresponding solution satisfies
\[
\begin{aligned}
    &\|\mathbf x(t)-\mathbf x_{\mathrm c}^{\ast}\|
    +\|\bm\zeta(t)-\bm\zeta^\ast\|
    +\|\mathbf L\mathbf x(t)\|
    +\|\mathbf h^{\mathrm c}(\mathbf x(t))+\mathbf L_q\bm\zeta(t)\|  \\
    &\qquad\le
    C_{\mathrm{FL}}e^{-\alpha_{\mathrm{FL}}t}
    \left(
        \|\mathbf x(0)-\mathbf x_{\mathrm c}^{\ast}\|
        +
        \|\bm\zeta(0)-\bm\zeta^\ast\|
    \right),
    \qquad t\ge0 ,
\end{aligned}
\]
with admissible rate
\(\alpha_{\mathrm{FL}}\in
(0,\min\{\rho_{\mathrm c}/N,\alpha_G\})\).
\end{proposition}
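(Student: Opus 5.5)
Under the ideal law the output obeys $\dot{\mathbf y}_{\mathrm c}=\mathbf G_{\mathrm c}(\mathbf y_{\mathrm c})$ exactly, so the plan is to split the motion into a transverse part, the output, and a tangential part, the projection of $\mathbf x$ onto the feasible manifold, and then glue the two estimates. This mirrors the proof of Proposition~\ref{prop:fl-dist}, with the lifted state $\bm\chi$ in place of $\mathbf x$.

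\emph{Step 1 (equilibrium and uniqueness).} Since $\bm\zeta(0)\in\operatorname{Im}\mathbf L_q$ and $\dot{\bm\zeta}=-\mathbf L_q\bm\mu\in\operatorname{Im}\mathbf L_q$, we can restrict to $\bm\zeta\in\operatorname{Im}\mathbf L_q$. Because $\mathbf h^{\mathrm c}(\mathbf x_{\mathrm c}^\ast)\in\operatorname{Im}\mathbf L_q$, there is a unique $\bm\zeta^\ast\in\operatorname{Im}\mathbf L_q$ with $\mathbf h^{\mathrm c}(\mathbf x_{\mathrm c}^\ast)+\mathbf L_q\bm\zeta^\ast=\mathbf 0$, namely $\bm\zeta^\ast=-\mathbf L_q^{\dagger}\mathbf h^{\mathrm c}(\mathbf x_{\mathrm c}^\ast)$. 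At an equilibrium we have $\mathbf y_{\mathrm c}=\mathbf 0$. The relations $\dot{\mathbf x}=0$ and $\mathbf L_q\bm\mu=0$ then give $\bm\mu=\mathbf 1_N\otimes\bm\beta$ and $\nabla f(\mathbf x)+\mathbf v+\tilde{\mathbf J}^{\mathrm c\top}\bm\mu=0$. Premultiplying by $\mathbf 1_N^\top\otimes\mathbf I_d$ yields the KKT condition $\nabla F(\mathbf s)+(\sum_i\mathbf J_i^{\mathrm c}(\mathbf s))^\top\bm\beta=0$ on $\Omega_{\mathrm c}$. By quadratic growth (Assumption~\ref{asm:growth}), the only such stationary point near $\mathbf s^\ast_{\mathrm c}$ is $\mathbf s_{\mathrm c}^\ast$. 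The multipliers are then unique by Lemma~\ref{lem:fl-wp-coupled}.

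\emph{Step 2 (output decay).} Since $\mathbf u^{\rm q}_{\mathrm c}$ is $C^1$ near $\mathcal Z_{\mathrm c}$ (implicit function theorem applied to the bijective restriction of $\mathbf M_{\mathrm c}$ from Lemma~\ref{lem:fl-wp-coupled}), the closed loop is well-posed. Moreover $\|\mathbf y_{\mathrm c}(t)\|\le c\,e^{-\alpha_G t}\|\mathbf y_{\mathrm c}(0)\|$, and $\|\mathbf y_{\mathrm c}(0)\|\lesssim \|\mathbf x(0)-\mathbf x^\ast_{\mathrm c}\|+\|\bm\zeta(0)-\bm\zeta^\ast\|$.

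\emph{Step 3 (tangential motion).} Consider the lifted feasible set $\mathcal Z_{\mathrm c}\cap\{\bm\zeta\in\operatorname{Im}\mathbf L_q\}$. Near $(\mathbf x^\ast_{\mathrm c},\bm\zeta^\ast)$ it is a smooth manifold diffeomorphic to $\Omega_{\mathrm c}$ via $\mathbf s\mapsto(\mathbf 1_N\otimes\mathbf s,-\mathbf L_q^\dagger\mathbf h^{\mathrm c}(\mathbf 1_N\otimes\mathbf s))$; this uses Assumption~\ref{asm:reg-coupled}. On it the dynamics reduces to $\dot{\mathbf s}=-\frac1N\Pi_{T_{\mathbf s}\Omega_{\mathrm c}}\nabla F(\mathbf s)$. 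Take the Lyapunov function $V=F(\mathbf s)-F(\mathbf s^\ast_{\mathrm c})$. To get a rate, we need a local gradient-domination inequality $\|\Pi_{T}\nabla F(\mathbf s)\|^2\ge 2\rho'(F(\mathbf s)-F^\ast)$ with $\rho'$ arbitrarily close to $\rho_{\mathrm c}$. We derive it from quadratic growth together with a second-order expansion of the Lagrangian restricted to $\Omega_{\mathrm c}$ in local coordinates. Quadratic growth forces the Riemannian Hessian at $\mathbf s^\ast_{\mathrm c}$ to be $\succeq\rho_{\mathrm c}$ (local Lipschitzness of $\nabla^2 f_i$ and $\mathbf J_i$ makes the error terms $O(\|\mathbf s-\mathbf s^\ast\|^3)$). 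This yields decay $e^{-\rho' t/N}$ on the manifold.

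\emph{Step 4 (off-manifold gluing).} This is the main obstacle. For a general trajectory, write $\mathbf x=\mathbf 1_N\otimes\mathbf s+\mathbf x_\perp$ with $\mathbf s$ the local nearest-point projection onto $\Omega_{\mathrm c}$. This is well-defined by the tubular neighbourhood theorem, and $\|\mathbf x_\perp\|+\|\bm\zeta-\bm\zeta(\mathbf s)\|\lesssim\|\mathbf y_{\mathrm c}\|$. The $\mathbf s$-dynamics then equals the projected flow plus a perturbation of size $O(\|\mathbf y_{\mathrm c}\|)$, because $\mathbf u^{\rm q}_{\mathrm c}$ is $C^1$. We get $\dot V\le-\frac{2\rho'}{N}V+c_1\sqrt V\,\|\mathbf y_{\mathrm c}\|$ while the trajectory stays in the neighbourhood. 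Comparison on $W=\sqrt V$ gives $\dot W\le -\frac{\rho'}{N}W+c\,e^{-\alpha_G t}\|\mathbf y_{\mathrm c}(0)\|$. This produces the rate $\min\{\rho'/N,\alpha_G\}$, minus an arbitrarily small loss to absorb the case of equal rates. Using $\|\mathbf s-\mathbf s^\ast\|\asymp\sqrt V$ from quadratic growth and smoothness, and a standard bootstrap/invariance argument with $\delta_{\mathrm{FL}}$ small, the trajectory never leaves the neighbourhood. Finally, $\|\bm\zeta-\bm\zeta^\ast\|$ is controlled by $\|\mathbf y_{\mathrm c}\|+\|\mathbf x-\mathbf x^\ast_{\mathrm c}\|$ through $\mathbf L_q^\dagger$ on $\operatorname{Im}\mathbf L_q$, which gives the stated bound.
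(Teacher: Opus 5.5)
Apart from technical choices, your route is the paper's. You use $F-F^\ast$ with a local PL inequality and a tubular projection, where the paper uses normalized chart coordinates $\bm\eta$, $V_1=\frac12\|\bm\eta\|^2$ and $W=\mu V_1+V_2$. The shared structure is: exactly assigned output dynamics, zero dynamics controlled through the Hessian bound implied by quadratic growth, cascade gluing, and $\bm\zeta-\bm\zeta^\ast$ recovered through $\mathbf L_q^{\dagger}$.

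The step that fails is Step~3's identification of the zero dynamics with $\dot{\mathbf s}=-\frac1N\Pi_{T_{\mathbf s}\Omega_{\mathrm c}}\nabla F(\mathbf s)$. On $\mathcal Z_{\mathrm c}$ the ideal law enforces $\mathbf A_{\mathrm c}\dot{\bm\chi}=\mathbf 0$. Also $\mathbf B_{\mathrm c}\mathbf u_{\mathrm c}=\mathbf A_{\mathrm c}^\top\operatorname{col}(\bm\xi,\bm\mu)$ whenever $\mathbf v=\mathbf L\bm\xi$. Hence $\dot{\bm\chi}=-\Pi_{\ker\mathbf A_{\mathrm c}}\nabla\bar f(\bm\chi)$, an orthogonal projection in the full $(\mathbf x,\bm\zeta)$-space, so $\bm\zeta$ moves along the manifold as well. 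Pulled back through your chart $\mathbf s\mapsto(\mathbf 1_N\otimes\mathbf s,-\mathbf L_q^{\dagger}\mathbf h^{\mathrm c}(\mathbf 1_N\otimes\mathbf s))$, this is the gradient flow of $F$ on $\Omega_{\mathrm c}$ for the metric $g_{\mathbf s}(\mathbf w,\mathbf w)=N\|\mathbf w\|^2+\|\mathbf D_{\mathbf s}\mathbf w\|^2$. Here $\mathbf D_{\mathbf s}\mathbf w:=\mathbf L_q^{\dagger}\operatorname{col}(\mathbf J_1^{\mathrm c}(\mathbf s)\mathbf w,\ldots,\mathbf J_N^{\mathrm c}(\mathbf s)\mathbf w)$, and the flow reads
\[
\dot{\mathbf s}\in T_{\mathbf s}\Omega_{\mathrm c},\qquad
\Pi_{T_{\mathbf s}\Omega_{\mathrm c}}\big(N\mathbf I+\mathbf D_{\mathbf s}^\top\mathbf D_{\mathbf s}\big)\dot{\mathbf s}
=-\Pi_{T_{\mathbf s}\Omega_{\mathrm c}}\nabla F(\mathbf s).
\]
This coincides with your flow only if $\mathbf J_i^{\mathrm c}(\mathbf s)\mathbf w=\mathbf 0$ for every $i$ and every $\mathbf w\in T_{\mathbf s}\Omega_{\mathrm c}$. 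That holds for $\mathbf h_i^{\mathrm c}=\omega_i\mathbf g$, as in all of the paper's examples, but not in general.

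Concretely, take $N=d=2$, $q=1$, $a_{12}=1$, $f_i(\mathbf x_i)=\frac{\rho}{4}\|\mathbf x_i\|^2$, $h_1^{\mathrm c}(\mathbf x_1)=x_{1,1}$, $h_2^{\mathrm c}(\mathbf x_2)=x_{2,2}$. All standing assumptions hold, $\Omega_{\mathrm c}=\{s_1+s_2=0\}$, $\mathbf s_{\mathrm c}^\ast=\mathbf 0$, and $\rho_{\mathrm c}=\rho$ is the sharp growth constant. On $\mathcal Z_{\mathrm c}$ write $\mathbf x_1=\mathbf x_2=(a,-a)$ and $\bm\zeta=(-a/2,a/2)$. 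Summing the two $\dot{\mathbf x}_i$-equations gives $\mu_1=-\rho a-2\dot a$ and $\mu_2=\rho a+2\dot a$. Then $\dot\zeta_1=-(\mu_1-\mu_2)$ must equal $-\dot a/2$, so $\dot a=-\frac{4\rho}{9}a$ instead of $-\frac{\rho}{2}a$. Thus your conclusion that every rate below $\min\{\rho_{\mathrm c}/N,\alpha_G\}$ is achieved is false as soon as $\alpha_G>4\rho/9$.

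Here is what Steps~3--4 actually give once $\frac1N\Pi_{T}\nabla F$ is replaced by $\operatorname{grad}_gF$ and the PL inequality is taken relative to $g$; the gluing in Step~4 goes through unchanged. You get local exponential stability with any rate below $\min\{\lambda_g,\alpha_G\}$. Here $\lambda_g$ is the smallest eigenvalue of the Hessian of $F|_{\Omega_{\mathrm c}}$ at $\mathbf s_{\mathrm c}^\ast$ relative to $g_{\mathbf s_{\mathrm c}^\ast}$. Quadratic growth only guarantees $\lambda_g\ge\rho_{\mathrm c}/(N+\kappa^2)$, with $\kappa:=\|\mathbf D_{\mathbf s_{\mathrm c}^\ast}|_{T_{\mathbf s_{\mathrm c}^\ast}\Omega_{\mathrm c}}\|$. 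In the example $\kappa^2=\frac14$ and the bound $4\rho/9$ is attained.

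That proves the proposition only in its existential reading, i.e., some $\alpha_{\mathrm{FL}}$ in the stated interval works. It does not prove the reading in which every $\alpha_{\mathrm{FL}}<\min\{\rho_{\mathrm c}/N,\alpha_G\}$ is admissible, which is how the result is used later for Theorem~\ref{thm:sp-coupled}. Note that the paper's Appendix~\ref{app:fl-coupled} asserts the same reduction: ``the reduced dynamics coincides with the projected gradient flow of $F$ on $\Omega_{\mathrm c}$'' with ``reduced convergence rate $\rho_{\mathrm c}/N$''. So this is a gap you inherit from the route rather than one you introduced.
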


Proposition~\ref{prop:fl-coupled} establishes that, for the lifted problem
\eqref{eq:lifted-problem-coupled}, \(\mathbf x(t)\) converges exponentially to
\(\mathbf x_{\mathrm c}^{\ast}\) and the constraint residuals vanish
exponentially. By Lemma~\ref{lem:lift-equiv}, the same conclusion holds
for the coupled problem \(\mathcal P_{\mathrm c}\).

\subsection{Distributed Singular-Perturbation Realization}
\label{sec:sp-realization-coupled}

As in Section~\ref{sec:sp-realization-dist}, the ideal FL relation
\eqref{eq:ideal-fl-relation-coupled} cannot be implemented directly,
since evaluating the unique admissible solution
\(\mathbf u_{\mathrm c}^{\rm q}(\bm\chi)\) requires global coupling
information. We therefore proceed to replace the algebraic
solution by a fast auxiliary dynamics that drives the residual to zero, and then
remove the term \(\mathbf A_{\mathrm c}(\bm\chi)\nabla\bar f(\bm\chi)\) by a
change of variables.

Define the FL residual
\[
    \mathbf e_{\rm c}
    :=
    \mathbf M_{\mathrm c}(\bm\chi)\mathbf u_{\mathrm c}
    +
    \mathbf A_{\mathrm c}(\bm\chi)\nabla\bar f(\bm\chi)
    +
    \mathbf G_{\mathrm c}(\mathbf y_{\mathrm c}),
\]
so that \(\mathbf e_{\rm c}=\mathbf 0\) recovers
\eqref{eq:ideal-fl-relation-coupled}, and, in view of
\eqref{eq:fl-output-compact-coupled}, \(\mathbf e_{\rm c}
=-\dot{\mathbf y}_{\mathrm c}+\mathbf G_{\mathrm c}(\mathbf y_{\mathrm c})\). Together
with the primal flow \eqref{eq:primal-flow-coupled}, this forms the singularly
perturbed interconnection
\begin{equation}
\label{eq:fast-algorithm-coupled}
    \begin{aligned}
        \dot{\bm\chi}
        &=
        -\nabla\bar f(\bm\chi)
        -\mathbf B_{\mathrm c}(\bm\chi)\mathbf u_{\mathrm c},\\
        \tau\,\dot{\mathbf u}_{\mathrm c}
        &=
        -\mathbf K_0
        \big(
            \mathbf M_{\mathrm c}(\bm\chi)\mathbf u_{\mathrm c}
            +\mathbf A_{\mathrm c}(\bm\chi)\nabla\bar f(\bm\chi)
            +\mathbf G_{\mathrm c}(\mathbf y_{\mathrm c})
        \big),
    \end{aligned}
\end{equation}
with slow state \(\bm\chi=\operatorname{col}(\mathbf x,\bm\zeta)\) and fast state
\(\mathbf u_{\mathrm c}\). The initial condition is $\mathbf v(0)\in\operatorname{Im}\mathbf L$ and \(\bm\zeta(0)\in\operatorname{Im}\mathbf L_q\).

For distributed implementation, we further select
\(\mathbf G_{\mathrm c}=\operatorname{col}(\mathbf G_g,\mathbf G_h)\)
to be block-separable, i.e.,
\(\mathbf G_g(\mathbf y_g)
=\operatorname{col}(\mathbf G_{g,1}(\mathbf y_{g,1}),\ldots,
\mathbf G_{g,N}(\mathbf y_{g,N}))\) and
\(\mathbf G_h(\mathbf y_h)
=\operatorname{col}(\mathbf G_{h,1}(\mathbf y_{h,1}),\ldots,
\mathbf G_{h,N}(\mathbf y_{h,N}))\).

Furthermore, to avoid the term
\(\mathbf A_{\mathrm c}(\bm\chi)\nabla\bar f(\bm\chi)\), which involves
exchanging \(\nabla f_i(\mathbf x_i)\) between nodes, we introduce the change of
variables
\[
    \mathbf z:=\mathbf u_{\mathrm c}-\mathbf K_p\mathbf y_{\mathrm c},
    \qquad
    \mathbf K_p:=\mathbf K_0/\tau .
\]
Differentiating along \eqref{eq:fast-algorithm-coupled} cancels this term and yields
\(\dot{\mathbf z}=-\mathbf K_p\mathbf G_{\mathrm c}(\mathbf y_{\mathrm c})\).
Writing \(\mathbf z=\operatorname{col}(\mathbf z_c,\mathbf z_h)\),
\(K_{pc}:=K_{0c}/\tau\), \(K_{ph}:=K_{0h}/\tau\), and recalling
\(\mathbf y_{\mathrm c}=\operatorname{col}(\mathbf L\mathbf x,
\mathbf h^{\mathrm c}(\mathbf x)+\mathbf L_q\bm\zeta)\),
\(\mathbf u_{\mathrm c}=\operatorname{col}(\mathbf v,\bm\mu)\), the
interconnection \eqref{eq:fast-algorithm-coupled} is equivalent to
\begin{equation}
\label{eq:sp-compact-coupled}
    \begin{aligned}
        \dot{\mathbf x}
            &=-\nabla f(\mathbf x)-\mathbf v
              -\tilde{\mathbf J}^{\mathrm c}(\mathbf x)^{\top}\bm\mu,\\
        \dot{\bm\zeta}
            &=-\mathbf L_q\bm\mu,\\
        \mathbf v
            &=K_{pc}\mathbf L\mathbf x+\mathbf z_c,\\
        \bm\mu
            &=K_{ph}\big(\mathbf h^{\mathrm c}(\mathbf x)+\mathbf L_q\bm\zeta\big)+\mathbf z_h,\\
       \dot{\mathbf z}_c
    &=-K_{pc}\mathbf G_g(\mathbf L\mathbf x),\\
\dot{\mathbf z}_h
    &=-K_{ph}\mathbf G_h\big(\mathbf h^{\mathrm c}(\mathbf x)
      +\mathbf L_q\bm\zeta\big),
    \end{aligned}
\end{equation}
with initial conditions \(\mathbf z_c(0)\in\operatorname{Im}\mathbf L\) and
\(\bm\zeta(0)\in\operatorname{Im}\mathbf L_q\). We refer to
\eqref{eq:sp-compact-coupled} as the SP realization of the ideal FL dynamics
\eqref{eq:ideal-fl-algorithm-coupled}.

Under the block structure of \(\mathbf{G}_{\rm c}\), \(\mathbf L\),
\(\mathbf L_q\), \(\tilde{\mathbf J}^{\mathrm c}\), and \(\mathbf h^{\mathrm c}\), realization
\eqref{eq:sp-compact-coupled} is fully distributed. Each agent
\(i\in\mathcal V\) maintains \(\mathbf x_i,\mathbf z_{c,i}\in\mathbb R^d\),
\(\bm\zeta_i,\mathbf z_{h,i}\in\mathbb R^{q}\), and runs
\begin{equation}
\label{eq:node-level-coupled}
    \begin{aligned}
        \dot{\mathbf x}_i
            &=-\nabla f_i(\mathbf x_i)-\mathbf v_i
              -\mathbf J_i^{\mathrm c}(\mathbf x_i)^{\top}\bm\mu_i,\\
        \dot{\bm\zeta}_i
            &=-\textstyle\sum_{j\in\mathcal N_i}a_{ij}(\bm\mu_i-\bm\mu_j),\\
        \mathbf v_i
            &=K_{pc}\!\sum_{j\in\mathcal N_i}\!a_{ij}(\mathbf x_i-\mathbf x_j)
              +\mathbf z_{c,i},\\
        \bm\mu_i
            &=K_{ph}\Big(\mathbf h_i^{\mathrm c}(\mathbf x_i)
              +\textstyle\sum_{j\in\mathcal N_i}a_{ij}(\bm\zeta_i-\bm\zeta_j)\Big)
              +\mathbf z_{h,i},\\
        \dot{\mathbf z}_{c,i}
            &=-K_{pc}\mathbf G_{g,i}\!\Big(
               \textstyle\sum_{j\in\mathcal N_i}\!a_{ij}(\mathbf x_i-\mathbf x_j)\Big),\\
        \dot{\mathbf z}_{h,i}
            &=-K_{ph}\mathbf G_{h,i}\Big(\mathbf h_i^{\mathrm c}(\mathbf x_i)
              +\textstyle\sum_{j\in\mathcal N_i}a_{ij}(\bm\zeta_i-\bm\zeta_j)\Big),
    \end{aligned}
\end{equation}
using only its own data and the variables \(\mathbf x_j,\bm\zeta_j,\bm\mu_j\) of
its neighbors \(j\in\mathcal N_i\).


The following theorem
states that for sufficiently strong time-scale separation the SP realization
\eqref{eq:sp-compact-coupled} preserves the local exponential convergence of the
ideal FL dynamics \eqref{eq:ideal-fl-algorithm-coupled}.

\begin{theorem}
\label{thm:sp-coupled}
Consider problem \(\mathcal P_{\mathrm c}\). Suppose
Assumptions~\ref{asm:graph}--\ref{asm:growth} and 
\ref{asm:reg-coupled} hold. Then \eqref{eq:sp-compact-coupled} admits a
locally unique equilibrium
\((\mathbf x_{\mathrm c}^{\ast},\bm\zeta^\ast,\mathbf z^\ast)\), which is
locally exponentially stable. Specifically, let \(\alpha_{\mathrm{FL}}\) be any
convergence rate admissible in Proposition~\ref{prop:fl-coupled}. For any
\(\alpha_{\mathrm{SP}}\in(0,\alpha_{\mathrm{FL}})\), there exists
\(\tau^\ast>0\) such that, for every fixed \(\tau\in(0,\tau^\ast)\), there
exist constants \(C_{\mathrm{SP}}>0\) and \(\delta_{\mathrm{SP}}>0\) such that
whenever
\(
    \|\mathbf x(0)-\mathbf x_{\mathrm c}^{\ast}\|
    +\|\mathbf h^{\mathrm c}(\mathbf x(0))+\mathbf L_q\bm\zeta(0)\|
    +\|\mathbf z(0)-\mathbf z^\ast\|<\delta_{\mathrm{SP}},
\)
the corresponding solution satisfies
\begin{equation}
\label{eq:sp-bound-coupled}
\begin{aligned}
    &\|\mathbf x(t)-\mathbf x_{\mathrm c}^{\ast}\|
    +\|\bm\zeta(t)-\bm\zeta^\ast\|
    +\|\mathbf z(t)-\mathbf z^\ast\|
    +\|\mathbf L\mathbf x(t)\|
    +\|\mathbf h^{\mathrm c}(\mathbf x(t))+\mathbf L_q\bm\zeta(t)\|  \\
    &\qquad\le
    C_{\mathrm{SP}}e^{-\alpha_{\mathrm{SP}}t}
    \left(
        \|\mathbf x(0)-\mathbf x_{\mathrm c}^{\ast}\|
        +
        \|\bm\zeta(0)-\bm\zeta^\ast\|
        +
        \|\mathbf z(0)-\mathbf z^\ast\|
    \right),
    \qquad t\ge0 .
\end{aligned}
\end{equation}

\end{theorem}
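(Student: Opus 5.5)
We prove Theorem~\ref{thm:sp-coupled} by undoing the change of variables and treating \eqref{eq:sp-compact-coupled} as the singularly perturbed system \eqref{eq:fast-algorithm-coupled}. The slow state is \(\bm\chi\) and the fast state is \(\mathbf u_{\mathrm c}\). The map \((\mathbf x,\bm\zeta,\mathbf z)\mapsto(\mathbf x,\bm\zeta,\mathbf u_{\mathrm c})\), with \(\mathbf u_{\mathrm c}=\mathbf z+\mathbf K_p\mathbf y_{\mathrm c}(\bm\chi)\), is a smooth diffeomorphism. Its inverse is explicit, and for fixed \(\tau\) both it and its inverse are Lipschitz near the equilibrium. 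Hence exponential bounds transfer between the two coordinate systems, at the cost of \(\tau\)-dependent constants, which the statement allows.

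\textbf{Step 1 (equilibrium).} At an equilibrium, \(\dot{\mathbf z}=\mathbf 0\) gives \(\mathbf G_{\mathrm c}(\mathbf y_{\mathrm c})=\mathbf 0\). Since the assigned output dynamics is globally exponentially stable, its only zero is \(\mathbf y_{\mathrm c}=\mathbf 0\). Thus \(\bm\chi\in\mathcal Z_{\mathrm c}\), and \(\mathbf u_{\mathrm c}\) solves \eqref{eq:ideal-fl-relation-coupled}, so \(\mathbf u_{\mathrm c}=\mathbf u_{\mathrm c}^{\rm q}(\bm\chi)\) by Lemma~\ref{lem:fl-wp-coupled}. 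Because \(\dot{\bm\chi}=\mathbf 0\), \(\bm\chi\) is also an equilibrium of \eqref{eq:ideal-fl-algorithm-coupled}. By Proposition~\ref{prop:fl-coupled} this forces \(\bm\chi=(\mathbf x_{\mathrm c}^\ast,\bm\zeta^\ast)\) locally, with \(\bm\zeta^\ast\in\operatorname{Im}\mathbf L_q\) determined uniquely by \(\mathbf L_q\bm\zeta^\ast=-\mathbf h^{\mathrm c}(\mathbf x^\ast_{\mathrm c})\). Setting \(\mathbf z^\ast:=\mathbf u_{\mathrm c}^{\rm q}(\bm\chi^\ast)\), since \(\mathbf y_{\mathrm c}=\mathbf 0\) there, gives local uniqueness. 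Throughout, we restrict to the invariant subspaces \(\mathbf v\in\operatorname{Im}\mathbf L\) and \(\bm\zeta\in\operatorname{Im}\mathbf L_q\). Invariance holds because \(\dot{\mathbf z}_c\) and \(\dot{\bm\zeta}\) lie in these images, and this restriction removes the uncontrollable kernel directions.

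\textbf{Step 2 (boundary layer and Lyapunov composition).} Let \(\tilde{\mathbf u}:=\mathbf u_{\mathrm c}-\mathbf u_{\mathrm c}^{\rm q}(\bm\chi)\). With \(\bm\chi\) frozen, the boundary layer is \(\tau\dot{\tilde{\mathbf u}}=-\mathbf K_0\mathbf M_{\mathrm c}(\bm\chi)\tilde{\mathbf u}\) on \(\mathcal A_{\mathrm c}\). We must show that \(-\mathbf K_0\mathbf M_{\mathrm c}\) restricted to \(\mathcal A_{\mathrm c}\) is Hurwitz, uniformly near \(\bm\chi^\ast\). We would use \(\mathbf M_{\mathrm c}=\mathbf A_{\mathrm c}\mathbf B_{\mathrm c}\) and \(\mathbf B_{\mathrm c}\mathbf u=\mathbf A_{\mathrm c}^\top\operatorname{col}(\bm\xi,\bm\mu)\) for \(\mathbf v=\mathbf L\bm\xi\). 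Then \(\mathbf M_{\mathrm c}\mathbf u=\mathbf A_{\mathrm c}\mathbf A_{\mathrm c}^\top\mathbf w\) with \(\mathbf u=\mathbf P\mathbf w\), where \(\mathbf P:=\operatorname{blkdiag}(\mathbf L,\mathbf I)\). The eigenvalues of \(\mathbf K_0\mathbf M_{\mathrm c}|_{\mathcal A_{\mathrm c}}\) therefore coincide with the nonzero spectrum of \(\mathbf K_0\mathbf P\mathbf A_{\mathrm c}\mathbf A_{\mathrm c}^\top\), which is similar to a product of positive semidefinite matrices and hence real and nonnegative. Injectivity from Lemma~\ref{lem:fl-wp-coupled} rules out the eigenvalue zero. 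A Lyapunov equation \(W(\bm\chi,\tilde{\mathbf u})=\tilde{\mathbf u}^\top\mathbf P_b(\bm\chi)\tilde{\mathbf u}\), solved near \(\bm\chi^\ast\) with bounded derivative in \(\bm\chi\), then gives exponential decay at rate \(c/\tau\). For the slow part, we use the converse Lyapunov function \(V\) furnished by the exponential stability in Proposition~\ref{prop:fl-coupled} at rate \(\alpha_{\mathrm{FL}}\), which satisfies \(\dot V\le-2\alpha_{\mathrm{FL}}V\) along \eqref{eq:ideal-fl-algorithm-coupled}. The full slow equation is \(\dot{\bm\chi}=f_{\rm FL}(\bm\chi)-\mathbf B_{\mathrm c}\tilde{\mathbf u}\). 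The interconnection term \(\tilde{\mathbf u}\) enters \(\dot V\) as \(O(\sqrt V\|\tilde{\mathbf u}\|)\). Conversely, \(\dot W\) picks up \(-\partial_{\bm\chi}\mathbf u^{\rm q}\dot{\bm\chi}=O(\|\bm\chi-\bm\chi^\ast\|+\|\tilde{\mathbf u}\|)\) and \(O(\|\tilde{\mathbf u}\|^2\|\dot{\bm\chi}\|)\). The composite function \(V+W\), in the style of Khalil's Theorem 11.4, yields \(\dot{(V+W)}\le-2(\alpha_{\mathrm{FL}}-O(\tau))V-(c/\tau-O(1))W\). Choosing \(\tau^\ast\) small enough gives the rate \(\alpha_{\mathrm{SP}}\).

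\textbf{Step 3 (conclusion) and main obstacle.} The residual terms are bounded by the state near the equilibrium, since \(\|\mathbf L\mathbf x\|\) and \(\|\mathbf y_h\|\) are Lipschitz in \(\bm\chi-\bm\chi^\ast\). The initial-condition measure in the statement is equivalent to \(\|\bm\chi(0)-\bm\chi^\ast\|+\|\mathbf z(0)-\mathbf z^\ast\|\) on \(\operatorname{Im}\mathbf L_q\). This follows because \(\mathbf L_q\) is invertible there, so \(\|\bm\zeta-\bm\zeta^\ast\|\lesssim\|\mathbf y_h\|+\|\mathbf x-\mathbf x^\ast\|\). The main obstacle is making the rate statement sharp, namely obtaining \(\alpha_{\mathrm{SP}}\) arbitrarily close to \(\alpha_{\mathrm{FL}}\) rather than just some rate. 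This requires that the cross-term constants in Step 2 be independent of \(\tau\) once the analysis is done in \((\bm\chi,\tilde{\mathbf u})\) coordinates, and that a weighted function \(V+\kappa W\) with suitably chosen \(\kappa\) absorb the cross terms. If that weighting fails, the fallback is to work with the linearization at the equilibrium. There, a block-triangularization (Chang transformation) exhibits slow eigenvalues within \(O(\tau)\) of those of the linearized FL dynamics, all of which have real part at most \(-\alpha_{\mathrm{FL}}\) by Proposition~\ref{prop:fl-coupled}, and fast eigenvalues of order \(1/\tau\). Local exponential stability at rate \(\alpha_{\mathrm{SP}}\) then follows by Lyapunov's indirect method.
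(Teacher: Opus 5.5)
Your argument is correct and follows the same skeleton as the paper's proof:
\begin{itemize}
\item the equilibrium is pinned down by Lemma~\ref{lem:fl-wp-coupled} together with the isolatedness that comes from Proposition~\ref{prop:fl-coupled};
\item the fast error is $\mathbf u_{\mathrm c}-\mathbf u_{\mathrm c}^{\rm q}(\bm\chi)$, and the reduced system is the ideal FL dynamics;
\item a Khalil-type composite Lyapunov function gives the rate;
\item the estimate is transferred back to $\mathbf z$ with $\tau$-dependent constants.
\end{itemize}

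The only genuine difference is the boundary-layer certificate. The paper passes to $\hat{\mathbf w}:=\mathbf B_{\mathrm c}(\bm\chi)\tilde{\mathbf u}$. With $\bm\chi$ frozen, the fast dynamics become $d\hat{\mathbf w}/ds=-\mathbf N_{\mathrm c}(\bm\chi)\hat{\mathbf w}$, where $\mathbf N_{\mathrm c}:=\mathbf B_{\mathrm c}\mathbf K_0\mathbf A_{\mathrm c}$ is symmetric and positive definite on the $\bm\chi$-dependent subspace $\mathbf B_{\mathrm c}(\bm\chi)\mathcal A_{\mathrm c}$. Then $\tfrac12\|\hat{\mathbf w}\|^2$ works directly: no Lyapunov equation and no $\partial_{\bm\chi}\mathbf P_b$ term are needed (only a $\partial\mathbf B_{\mathrm c}/\partial\bm\chi$ term enters the perturbation), and $\tau^\ast$ comes out in closed form. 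You instead stay on the fixed subspace $\mathcal A_{\mathrm c}$ and solve for $\mathbf P_b(\bm\chi)$, which is more generic and equally valid.

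For the reduced system you use a converse Lyapunov function, whereas the paper reuses the explicit $W=\mu V_1+V_2$. A smooth converse construction only gives rates strictly below the one in the exponential estimate. This is harmless, since the admissible rates form an open interval.

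One step is misstated, although its conclusion is true. With your $\mathbf P=\operatorname{blkdiag}(\mathbf L,\mathbf I)$, the identity $\mathbf M_{\mathrm c}\mathbf P=\mathbf A_{\mathrm c}\mathbf A_{\mathrm c}^\top$ does \emph{not} imply that the spectrum of $\mathbf K_0\mathbf M_{\mathrm c}|_{\mathcal A_{\mathrm c}}$ equals the nonzero spectrum of $\mathbf K_0\mathbf P\mathbf A_{\mathrm c}\mathbf A_{\mathrm c}^\top$. Since $\mathbf K_0$ commutes with $\mathbf P$, that matrix equals $\mathbf P(\mathbf K_0\mathbf M_{\mathrm c})\mathbf P$. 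Its nonzero spectrum is therefore that of $(\mathbf K_0\mathbf M_{\mathrm c})|_{\mathcal A_{\mathrm c}}(\mathbf P|_{\mathcal A_{\mathrm c}})^2$, and the two already differ for $N=2$, $d=q=1$.

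The correct similarity uses the inverse of $\mathbf P$ on $\mathcal A_{\mathrm c}$: $\mathbf K_0\mathbf M_{\mathrm c}|_{\mathcal A_{\mathrm c}}$ is similar to $(\mathbf P|_{\mathcal A_{\mathrm c}})^{-1}\mathbf K_0(\mathbf A_{\mathrm c}\mathbf A_{\mathrm c}^\top)|_{\mathcal A_{\mathrm c}}$, a product of a positive definite and a positive semidefinite matrix. More simply, $\mathbf K_0\mathbf A_{\mathrm c}\mathbf B_{\mathrm c}$ and $\mathbf B_{\mathrm c}\mathbf K_0\mathbf A_{\mathrm c}=\mathbf N_{\mathrm c}$ share their nonzero eigenvalues. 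Either way, the invariance of $\mathcal A_{\mathrm c}$ and the injectivity from Lemma~\ref{lem:fl-wp-coupled} place the restricted spectrum in $(0,\infty)$, so your boundary-layer claim stands.

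The ``main obstacle'' you flag is not an obstacle. In $(\bm\chi,\tilde{\mathbf u})$ coordinates every coefficient is $\tau$-independent, and $\tau$ enters only as the factor $1/\tau$ on the boundary-layer term. So the plain sum of the two Lyapunov functions closes exactly as in the paper, using Young's inequality weighted by $\gamma=\alpha_{\mathrm{FL}}-\alpha_{\mathrm{SP}}$, once $\tau$ is below an explicit threshold. Neither extra weighting nor the Chang-transformation fallback is needed, although the latter would also work.

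One point in your favour: you explicitly convert the statement's initial-condition measure, which involves $\|\mathbf h^{\mathrm c}(\mathbf x(0))+\mathbf L_q\bm\zeta(0)\|$, into $\|\bm\zeta(0)-\bm\zeta^\ast\|$ via invertibility of $\mathbf L_q$ on its image. The paper's proof of this theorem leaves that step implicit, relying on the inequality from Appendix~\ref{app:fl-coupled}.
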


\section{Discussion}
\label{sec:discuss}
\subsection{Connection between SP Realization and Primal--Dual Dynamics}
\label{sec:pd-connection-dist}

We further discuss the connection between the proposed SP realization and
classical primal--dual dynamics. Consider \(\mathcal P_{\mathrm d}\) as an example, whose augmented Lagrangian is given by
\[
    \mathcal L_{\mathrm d}^{\mathrm a}
    (\mathbf x,\mathbf z_c,\mathbf z_h)
    =
    f(\mathbf x)
    +
    \mathbf z_c^\top\mathbf x
    +
    \mathbf z_h^\top\mathbf h^{\mathrm d}(\mathbf x)
    +
    \frac{K_{pc}}{2}\mathbf x^\top\mathbf L\mathbf x
    +
    \frac{K_{ph}}{2}
    \|\mathbf h^{\mathrm d}(\mathbf x)\|^2,
\]
where \(\mathbf z_c\in\operatorname{Im}\mathbf L\) and
\(\mathbf z_h\in\mathbb R^r\) are multiplier-related variables. The associated primal--dual-type dynamics, which is also considered in
\cite{matei2013distributed}, is
\[
\begin{aligned}
    \dot{\mathbf x}
    &=
    -\nabla f(\mathbf x)
    -K_{pc}\mathbf L\mathbf x
    -\mathbf z_c
    -\tilde{\mathbf J}^{\mathrm d}(\mathbf x)^\top
    \big(
        K_{ph}\mathbf h^{\mathrm d}(\mathbf x)+\mathbf z_h
    \big),\\
    \dot{\mathbf z}_c
    &=
    \mathbf L\mathbf x,\\
    \dot{\mathbf z}_h
    &=
    \mathbf h^{\mathrm d}(\mathbf x).
\end{aligned}
\]
Moreover, when the assigned
output dynamics in \eqref{eq:sp-compact-dist} is chosen as
\(
    \mathbf G_g(\mathbf y_g)
    =
    -\frac{1}{K_{pc}}\mathbf y_g\) and \(
    \mathbf G_h(\mathbf y_h)
    =
    -\frac{1}{K_{ph}}\mathbf y_h
\), the resulting SP realization coincides with the primal--dual dynamics above. Therefore, the proposed
SP realization recovers a primal--dual-type dynamics in this special case.
This shows that classical primal--dual algorithms can be interpreted as
singular-perturbation implementations of the ideal feedback-linearizing law.

On the other hand, for the SP realization, a more typical choice is the
constant linear feedback law
\(
    \mathbf G_{\mathrm d}(\mathbf y_{\mathrm d})
    =
    -k\mathbf y_{\mathrm d},
\)
where \(k>0\) is independent of the singular-perturbation parameter \(\tau\).
In this case,
\(
    \dot{\mathbf z}_c
    =
    kK_{pc}\mathbf L\mathbf x,\) and 
\(    \dot{\mathbf z}_h
    =
    kK_{ph}\mathbf h^{\mathrm d}(\mathbf x).
\)
Since \(K_{pc}=K_{0c}/\tau\) and \(K_{ph}=K_{0h}/\tau\), the resulting
dual-variable dynamics has \(\tau\)-dependent high gains. Thus, compared with the fixed-gain primal--dual dynamics, the SP realization
uses \(\tau\)-dependent high-gain multiplier dynamics to rapidly enforce the
ideal feedback-linearizing relation, thereby recovering the assigned output
dynamics \(\dot{\mathbf y}_{\mathrm d}=-k\mathbf y_{\mathrm d}\) in the
singular-perturbation limit.

\subsection{Enlarged Regions of Attraction}
\label{sec:enlarged-roa}

The convergence results established in the previous sections are local in the
standard sense that the initial condition is required to be sufficiently close to
the optimizer. This is natural under the standing local assumptions imposed
around \(\xb_\ell^\ast\). Nevertheless, the feedback-linearization viewpoint
also suggests a possible enlargement of the region of attraction. Indeed, the
output part of the dynamics is responsible for driving the consensus and
constraint residuals to zero, while the zero-output dynamics is a projected
gradient flow on the feasible manifold. Therefore, under stronger regularity
conditions in a neighborhood of the feasible manifold, one may enlarge the
admissible initial set from a neighborhood of the optimizer to a tubular
neighborhood of the constraint manifold.

In this section, we discuss this extension for the FS-NDE realization of
\(\mathcal P_{\mathrm d}\). The same idea applies to the coupled problem
\(\mathcal P_{\mathrm c}\) after using the augmented formulation introduced in
Section~\ref{sec:coupled-equality}.
Let \(\Omega_{\mathrm d}^{\ast}\) denote the connected component of
\(\Omega_{\mathrm d}\) that contains \(\sbf_{\mathrm d}^{\ast}\), and define
\[
    \mathcal X_{\mathrm d}^{\ast}
    :=
    \left\{
        \mathbf 1_N\otimes\sbf:
        \sbf\in\Omega_{\mathrm d}^{\ast}
    \right\}.
\]
For \(\varepsilon>0\), let
\[
    \mathcal N_{\mathrm d}(\varepsilon)
    :=
    \left\{
        \xb\in\mathbb R^{Nd}:
        \operatorname{dist}
        \left(
            \xb,\mathcal X_{\mathrm d}^{\ast}
        \right)
        <
        \varepsilon
    \right\}.
\]
The following assumptions are stronger versions of
Assumptions~\ref{asm:smooth}, \ref{asm:growth}, and
\ref{asm:reg-dist}. They are imposed in a neighborhood of the feasible
manifold rather than only around the optimizer.

\begin{assumption}
\label{asm:tube-smooth-dist}
For problem \(\mathcal P_{\mathrm d}\), there exists
\(\varepsilon>0\) such that, for each \(i\in\mathcal V\),
\(\nabla^2 f_i\) and \(\mathbf J_i^{\mathrm d}\) are Lipschitz continuous on
the set $\mathcal N_{\mathrm d}(\varepsilon)$.
\end{assumption}

\begin{assumption}
\label{asm:tube-reg-dist}
For problem \(\mathcal P_{\mathrm d}\), on the set
\(\mathcal N_{\mathrm d}(\varepsilon)\), the stacked constraint Jacobian is
uniformly full row rank, i.e., there exists
\(\underline\sigma_{\mathrm d}>0\) such that
\[
    \sigma_{\min}
    \left(
    \operatorname{col}
    \big(
        \mathbf J_1^{\mathrm d}(\xb_1),\ldots,
        \mathbf J_N^{\mathrm d}(\xb_N)
    \big)
    \right)
    \ge
    \underline\sigma_{\mathrm d},
    \qquad
    \forall \xb\in\mathcal N_{\mathrm d}(\varepsilon),
\]
where \(r:=\sum_{i=1}^{N}r_i\).
\end{assumption}

\begin{assumption}
\label{asm:manifold-pl-dist}
For problem \(\mathcal P_{\mathrm d}\), there exists
\(\mu_{\mathrm d}>0\) such that \(F\) satisfies the manifold
Polyak--{\L}ojasiewicz condition on \(\Omega_{\mathrm d}^{\ast}\), i.e.,
\[
    \frac{1}{2}
    \left\|
        \Pi_{T_{\sbf}\Omega_{\mathrm d}}
        \nabla F(\sbf)
    \right\|^2
    \ge
    \mu_{\mathrm d}
    \left(
        F(\sbf)-F(\sbf_{\mathrm d}^{\ast})
    \right),
    \qquad
    \forall \sbf\in\Omega_{\mathrm d}^{\ast}.
\]
\end{assumption}

\begin{remark}
Assumption~\ref{asm:tube-smooth-dist} strengthens the local Lipschitz
requirement in Assumption~\ref{asm:smooth} from a neighborhood of
\(\xb_{\mathrm d}^{\ast}\) to a tubular neighborhood of the feasible manifold
component \(\mathcal X_{\mathrm d}^{\ast}\). Assumption~\ref{asm:tube-reg-dist}
is the corresponding version of Assumption~\ref{asm:reg-dist} that requires uniform full rank on the feasible manifold. For Assumption \ref{asm:manifold-pl-dist}, the Polyak--{\L}ojasiewicz condition is a standard regularity condition used
to establish exponential convergence for nonconvex optimization problems. In the
present setting, we only require this condition to hold for the restricted
objective on the feasible manifold component \(\Omega_{\mathrm d}^{\ast}\),
rather than in the whole Euclidean space.

\end{remark}

\begin{theorem}
\label{thm:sp-dist-enlarged-roa}
Consider problem \(\mathcal P_{\mathrm d}\). Suppose Assumptions~\ref{asm:graph}, \ref{asm:growth},
\ref{asm:tube-smooth-dist}, 
\ref{asm:tube-reg-dist} and \ref{asm:manifold-pl-dist} hold. Then \eqref{eq:sp-compact-dist} admits a locally unique equilibrium \((\mathbf x_{\mathrm d}^{\ast},\zb^\ast)\), which is locally exponentially stable. For any
\(
    \alpha_{\mathrm{SP}}
    \in
    \left(
        0,
        \min
        \left\{
            \frac{2\mu_{\mathrm d}}{N},
            \alpha_G
        \right\}
    \right),
\)
there exist constants \(\tau^\ast>0\), \(C_{\mathrm{SP}}>0\), and
\(\delta_z>0\) such that, for every
\(\tau\in(0,\tau^\ast)\), if
\[
    \xb(0)\in \mathcal N_{\mathrm d}(\varepsilon),
    \qquad
    \left\|
        \zb(0)-\zb_{\mathrm d}^{\mathrm q}(\xb(0))
    \right\|
    <
    \delta_z,
\]
where $\zb_{\mathrm d}^{\mathrm q}(\xb(0)):=\ub_{\mathrm d}^{\mathrm q}(\xb(0)) - \mathbf K_p\yb_{\mathrm d}(\xb(0))$ with $\ub_{\mathrm d}^{\mathrm q}(\xb(0))$ being the unique solution of \eqref{eq:ideal-fl-relation-dist},
and \(\zb_c(0)\in\operatorname{Im}\mathbf L\), then the solution of
\eqref{eq:sp-compact-dist}  satisfies
\[
\begin{aligned}
    &\|\xb(t)-\xb_{\mathrm d}^{\ast}\|
    +
    \|\mathbf L\xb(t)\|
    +
    \|\mathbf h^{\mathrm d}(\xb(t))\|
    +
    \left\|
        \zb(t)-\zb^\ast
    \right\|                                      \\
    &\qquad\le
    C_{\mathrm{SP}}e^{-\alpha_{\mathrm{SP}}t}
    \Big(
        \|\xb(0)-\xb_{\mathrm d}^{\ast}\|
        +
        \|\mathbf L\xb(0)\|
        +
        \|\mathbf h^{\mathrm d}(\xb(0))\|
        +
        \left\|
            \zb(0)-\zb^\ast
        \right\|
    \Big),
    \qquad t\ge0 .
\end{aligned}
\]
\end{theorem}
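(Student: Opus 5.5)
Inside the tube \(\mathcal N_{\mathrm d}(\varepsilon)\) I would use the same two-time-scale Lyapunov construction as for Theorem~\ref{thm:sp-dist}. The difference is that every constant must now be made uniform using Assumptions~\ref{asm:tube-smooth-dist}--\ref{asm:tube-reg-dist}, and the local quadratic-growth argument on the zero dynamics is replaced by the manifold PL inequality of Assumption~\ref{asm:manifold-pl-dist}.

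First, with uniform full rank \(\underline\sigma_{\mathrm d}\), the argument of Lemma~\ref{lem:fl-wp-dist} gives a uniform lower bound on the smallest singular value of \(\mathbf M_{\mathrm d}(\xb)\) restricted to \(\mathcal A_{\mathrm d}\). Hence \(\ub_{\mathrm d}^{\rm q}(\xb)\) is well defined and Lipschitz on \(\mathcal N_{\mathrm d}(\varepsilon)\), with bounded derivative on any bounded part of the tube.

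Next I would write the fast error in the original coordinates, \(\tilde\ub:=\ub_{\mathrm d}-\ub_{\mathrm d}^{\rm q}(\xb)=\zb-\zb_{\mathrm d}^{\rm q}(\xb)\), which is the quantity controlled by \(\delta_z\). Its dynamics is
\[
\tau\dot{\tilde\ub}=-\mathbf K_0\mathbf M_{\mathrm d}(\xb)\tilde\ub-\tau\,\tfrac{\partial \ub_{\mathrm d}^{\rm q}}{\partial\xb}\dot\xb .
\]
The symmetric part of \(\mathbf K_0\mathbf M_{\mathrm d}\) need not be positive, so I would use a quadratic Lyapunov function \(W=\tilde\ub^\top\mathbf P(\xb)\tilde\ub\) on \(\mathcal A_{\mathrm d}\). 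Here \(\mathbf P(\xb)\) is obtained from a Lyapunov equation for \(-\mathbf K_0\mathbf M_{\mathrm d}(\xb)\), which is Hurwitz on \(\mathcal A_{\mathrm d}\) by the factorization \(\mathbf M_{\mathrm d}=\mathbf A_{\mathrm d}\mathbf B_{\mathrm d}\), since its nonzero spectrum matches that of \(\mathbf B_{\mathrm d}\mathbf K_0\mathbf A_{\mathrm d}\). This yields \(\dot W\le -(c/\tau)W+c'\|\tilde\ub\|\,\|\dot\xb\|\).

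For the slow part, the output satisfies \(\dot\yb_{\mathrm d}=\mathbf G_{\mathrm d}(\yb_{\mathrm d})-\mathbf M_{\mathrm d}\tilde\ub\). A converse Lyapunov function \(V_y\) for the assigned output dynamics, which is globally exponentially stable with rate \(\alpha_G\), then gives exponential decay of \(\|\mathbf L\xb\|+\|\mathbf h^{\mathrm d}(\xb)\|\) up to an \(O(\|\tilde\ub\|)\) perturbation. Since \(\delta_z\) and \(\tau\) are small, the residual can never grow enough to push \(\xb\) out of the tube. For the tangential part I would use \(V_F(\xb):=F(\pi(\xb))-F(\sbf^\ast_{\mathrm d})\), where \(\pi\) is the nearest-point projection onto \(\mathcal X^\ast_{\mathrm d}\) (equivalently onto \(\Omega_{\mathrm d}^\ast\)), which is smooth in the tube after possibly shrinking \(\varepsilon\). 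Along the flow,
\[
\dot V_F\le -\tfrac1N\|\Pi_{T\Omega}\nabla F\|^2+c(\|\yb_{\mathrm d}\|+\|\tilde\ub\|)\sqrt{V_F}\,(\cdot),
\]
and PL gives \(-\tfrac{2\mu_{\mathrm d}}N V_F\) plus cross terms. Combining \(V=V_F+\kappa_1V_y+\kappa_2W\), absorbing the cross terms by Young's inequality and by choosing \(\tau^\ast\) small, produces a decay rate arbitrarily close to \(\min\{2\mu_{\mathrm d}/N,\alpha_G\}\). Finally I would convert \(V_F\) decay into \(\|\xb-\xb^\ast_{\mathrm d}\|\) decay: the PL condition implies quadratic growth (Karimi et al.), and together with Assumption~\ref{asm:growth} and the fact that \(\Omega^\ast_{\mathrm d}\) is the connected component of \(\sbf^\ast_{\mathrm d}\), the minimizer is unique on it. Recovering \(\zb-\zb^\ast\) from \(\tilde\ub\), \(\yb_{\mathrm d}\) and \(\xb\) then follows by Lipschitz continuity. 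Local exponential stability and uniqueness of the equilibrium are inherited from Theorem~\ref{thm:sp-dist}.

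I expect the main obstacle to be the lack of compactness. The tube around a possibly unbounded manifold gives no a priori bound on \(\nabla F\) or \(\partial\ub^{\rm q}_{\mathrm d}/\partial\xb\), and these bounds are needed to make the \(\tau\)-dependent coupling constants uniform. I would first try to show that the sublevel set \(\{V\le V(0)\}\) intersected with the tube is forward invariant and bounded, using PL together with the Lipschitz gradients in Assumption~\ref{asm:tube-smooth-dist}. The constants would then be taken over that set, with \(\tau^\ast\) and \(\delta_z\) chosen accordingly.
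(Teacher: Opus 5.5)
Your proposal follows essentially the same route as the paper's outline. The shared ingredients are: a converse Lyapunov function $V_y$ for the assigned output dynamics; $V_F=F(\sbf)-F(\sbf_{\mathrm d}^{\ast})$ evaluated at the tangential (projected) point, with the manifold PL inequality and Young's inequality absorbing the $\yb_{\mathrm d}$-coupling; the fast error $\tilde{\ub}=\zb-\zb_{\mathrm d}^{\mathrm q}(\xb)$ handled by the singular-perturbation argument of Theorem~\ref{thm:sp-dist}; and quadratic growth plus uniqueness of the minimizer on $\Omega_{\mathrm d}^{\ast}$ to recover $\|\xb-\xb_{\mathrm d}^{\ast}\|$. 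The differences are cosmetic. You build one composite function directly, whereas the paper first closes the ideal-FL estimate with $V_F+\gamma V_y$. You also use a state-dependent $\mathbf P(\xb)$, whereas the paper's coordinate $\hat{\mathbf w}=\mathbf B_{\mathrm d}(\xb)\hat{\mathbf u}$ already gives the symmetric generator $K_{0c}\mathbf L+K_{0h}\tilde{\mathbf J}^{\mathrm d}(\xb)^{\top}\tilde{\mathbf J}^{\mathrm d}(\xb)$ and avoids a $\dot{\mathbf P}$ term.

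The non-compactness obstacle you flag is real, and the paper does not resolve it either: it simply asserts uniformly bounded derivatives, which Lipschitz Hessians and Jacobians do not provide on an unbounded tube. Note, however, that your sublevel-set fix makes $\tau^{\ast}$, $\delta_z$ and $C_{\mathrm{SP}}$ depend on $V(0)$. Unless $\Omega_{\mathrm d}^{\ast}$ is bounded, that yields a semiglobal result rather than the tube-uniform constants the statement asserts.
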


Theorem~\ref{thm:sp-dist-enlarged-roa} shows that the admissible initial
condition for the primal state can be enlarged from a small neighborhood of
\(\xb_{\mathrm d}^{\ast}\) to a tubular neighborhood of the feasible manifold
patch \(\mathcal X_{\mathrm d}^\ast\). 
This observation is useful when an initial primal point satisfying the equality
constraints is available. For example, suppose
\(\xb(0)=\mathbf 1_N\otimes\sbf(0)\in\mathcal X_{\mathrm d}^\ast\),
where \(\sbf(0)\in\Omega_{\mathrm d}^{\ast}\). Then initialize
\(
    \zb(0)
    =
    \ub_{\mathrm d}^{\mathrm q}(\xb(0))-\mathbf K_p\yb_{\mathrm d}(\xb(0))= \ub_{\mathrm d}^{\mathrm q}(\xb(0))
\), and the initial condition is satisfied. Therefore, the FS-NDE realization
converges to \(\xb_{\mathrm d}^{\ast}\) even when \(\xb(0)\) is not
sufficiently close to \(\xb_{\mathrm d}^{\ast}\). 

In particular, we consider the case where the local equality constraints
are affine and Assumption~\ref{asm:tube-smooth-dist} is strengthened to
hold globally. Then we have the following corollary.

\begin{corollary}
\label{cor:sp-dist-global-affine}
Consider problem \(\mathcal P_{\mathrm d}\), and suppose all the conditions
of Theorem~\ref{thm:sp-dist-enlarged-roa} hold. In addition, suppose that
\(\mathbf h_i^{\mathrm d}(\mathbf x_i)
=\mathbf A_i^{\mathrm d}\mathbf x_i-\mathbf b_i^{\mathrm d}\) for all
\(i\in\mathcal V\), and that
Assumption~\ref{asm:tube-smooth-dist} holds globally on
\(\mathbb R^{Nd}\). Then, for any
\(\alpha_{\mathrm{SP}}\in
(0,\min\{2\mu_{\mathrm d}/N,\alpha_G\})\), there exists
\(\tau^\ast>0\) such that, for every fixed
\(\tau\in(0,\tau^\ast)\), the equilibrium
\((\mathbf x_{\mathrm d}^{\ast},\mathbf z^\ast)\) of
\eqref{eq:sp-compact-dist} is globally exponentially stable with rate
\(\alpha_{\mathrm{SP}}\). 
\end{corollary}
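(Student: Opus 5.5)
The plan is to reduce Corollary~\ref{cor:sp-dist-global-affine} to Theorem~\ref{thm:sp-dist-enlarged-roa}, using the fact that affine constraints make every quantity in that theorem uniform over all of \(\mathbb R^{Nd}\). For \(\mathbf h_i^{\mathrm d}(\mathbf x_i)=\mathbf A_i^{\mathrm d}\mathbf x_i-\mathbf b_i^{\mathrm d}\), the Jacobian \(\tilde{\mathbf J}^{\mathrm d}=\operatorname{blkdiag}(\mathbf A_i^{\mathrm d})\) is constant. The feasible set \(\Omega_{\mathrm d}\) is an affine subspace, so it is connected and \(\Omega_{\mathrm d}^{\ast}=\Omega_{\mathrm d}\). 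Its tangent space \(T_{\sbf}\Omega_{\mathrm d}=\ker\operatorname{col}(\mathbf A_1^{\mathrm d},\ldots,\mathbf A_N^{\mathrm d})\) is independent of \(\sbf\). The uniform full-rank bound of Assumption~\ref{asm:tube-reg-dist} therefore holds on all of \(\mathbb R^{Nd}\) with the same \(\underline\sigma_{\mathrm d}\), so in effect \(\varepsilon=\infty\). Consequently \(\mathbf M_{\mathrm d}\) and \(\mathbf A_{\mathrm d}\) are constant matrices. By the argument of Lemma~\ref{lem:fl-wp-dist}, \(\mathbf M_{\mathrm d}\) restricted to \(\operatorname{Im}\mathbf L\times\mathbb R^r\) is invertible with a single constant bound on its inverse. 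Hence the FL solution \(\mathbf u_{\mathrm d}^{\mathrm q}(\xb)\) is globally defined and affine in \(\nabla f(\xb)\) and \(\mathbf G_{\mathrm d}(\yb_{\mathrm d})\), and by the global Lipschitz hypothesis it is globally Lipschitz.

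Next I would rewrite the SP realization in the error coordinate \(\mathbf e:=\zb-\zb_{\mathrm d}^{\mathrm q}(\xb)\), equivalently \(\mathbf u_{\mathrm d}-\mathbf u_{\mathrm d}^{\mathrm q}(\xb)\). Because \(\mathbf M_{\mathrm d}\) is constant, the fast subsystem reads
\[
\tau\dot{\mathbf e}=-\mathbf K_0\mathbf M_{\mathrm d}\mathbf e-\tau\tfrac{\partial \mathbf u_{\mathrm d}^{\mathrm q}}{\partial\xb}\dot\xb .
\]
The matrix \(\mathbf K_0\mathbf M_{\mathrm d}\) restricted to the invariant subspace \(\operatorname{Im}\mathbf L\times\mathbb R^r\) is Hurwitz: \(\mathbf M_{\mathrm d}=\mathbf A_{\mathrm d}\mathbf B_{\mathrm d}\), and the injectivity argument gives a positive-definite quadratic form after a suitable similarity, which needs checking for general \(\mathbf K_0\). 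I would then build a composite Lyapunov function \(V=W(\xb)+c\,\mathbf e^\top\mathbf P\mathbf e\) with \(\mathbf P\) the Lyapunov matrix of the fast part. The slow part is \(W=\|\yb_{\mathrm d}\|^2\text{-type term}+\big(F(\bar\sbf)-F(\sbf_{\mathrm d}^{\ast})\big)\), where \(\bar\sbf\) is the orthogonal projection of \(\xb\) onto the affine set \(\mathcal X_{\mathrm d}\). This projection is now a linear map, which removes all tubular-neighborhood restrictions.

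The remaining steps are the following. (i) Under the ideal law, \(\yb_{\mathrm d}\) decays at rate \(\alpha_G\) globally. The projected motion of \(\bar\sbf\) is the projected gradient flow perturbed by a term of order \(\|\yb_{\mathrm d}\|\), with a global Lipschitz constant. The manifold PL condition of Assumption~\ref{asm:manifold-pl-dist}, which now holds on the whole affine set, gives \(\dot W\le-\min\{2\mu_{\mathrm d}/N,\alpha_G\}W\) up to cross terms absorbed by weighting. (ii) The mismatch \(\mathbf B_{\mathrm d}\mathbf e\) enters the slow dynamics linearly with a constant gain. The derivative \(\tfrac{\partial\mathbf u^{\mathrm q}}{\partial\xb}\dot\xb\) is bounded by a global constant times \(\|\mathbf e\|+\sqrt W\), which uses global Lipschitzness of \(\nabla f\), \(\nabla^2 f\) and \(\mathbf G_{\mathrm d}\). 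Choosing \(\tau<\tau^\ast\) makes the cross terms dominated, yielding \(\dot V\le-2\alpha_{\mathrm{SP}}V\) on all of \(\mathbb R^{Nd}\times(\operatorname{Im}\mathbf L\times\mathbb R^r)\). (iii) The quadratic growth of Assumption~\ref{asm:growth}, combined with PL and smoothness, sandwiches \(W\) between multiples of \(\|\xb-\xb_{\mathrm d}^{\ast}\|^2\). This gives the stated global exponential estimate, and uniqueness of \(\xb_{\mathrm d}^{\ast}\) follows.

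The main obstacle is step (iii) together with the slow estimate in (i), because they must hold globally rather than locally. Quadratic growth is only assumed on \(\mathcal U_{\mathrm d}\). Globally, PL on the affine set gives \(F-F^\ast\ge\tfrac{\mu_{\mathrm d}}{2}\operatorname{dist}^2(\sbf,\arg\min)\), and I would use this quadratic-growth consequence of PL to replace the local growth bound. Combined with local uniqueness, it forces the minimizer set on the connected affine set to be \(\{\sbf_{\mathrm d}^{\ast}\}\), which requires a short argument that the PL minimizer set is connected or isolated. The upper bound \(W\lesssim\|\xb-\xb^\ast\|^2\) uses the global Lipschitz gradient.
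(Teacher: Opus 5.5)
Your route is the paper's. Its proof of Corollary~\ref{cor:sp-dist-global-affine} only notes that affine constraints make $\tilde{\mathbf J}^{\mathrm d}$, $\mathbf M_{\mathrm d}$ and $\mathbf N_{\mathrm d}=K_{0c}\mathbf L+K_{0h}(\tilde{\mathbf J}^{\mathrm d})^{\top}\tilde{\mathbf J}^{\mathrm d}$ constant, so uniform rank and the boundary-layer bound hold globally, and then reruns the proof of Theorem~\ref{thm:sp-dist-enlarged-roa} on the whole space. The Hurwitz check you left open is settled as in Appendix~\ref{app:sp-dist}: because $\mathbf K_0$ is block-scalar, in fast time $\frac{d}{ds}(\mathbf B_{\mathrm d}\mathbf e)=-\mathbf N_{\mathrm d}\mathbf B_{\mathrm d}\mathbf e$, and $\mathbf N_{\mathrm d}$ is symmetric and positive definite on $\operatorname{Im}\mathbf L+\operatorname{Im}(\tilde{\mathbf J}^{\mathrm d})^{\top}$. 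Written out in full, though, your argument has two steps that do not close.

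\emph{First gap: global Lipschitz bounds you assume but are not given.} Several of your steps rest on global Lipschitz continuity of $\nabla f$ and of $\mathbf G_{\mathrm d}$: step (ii), the $O(\|\mathbf y_{\mathrm d}\|)$ tangential perturbation in (i), the Lipschitz bound on $\mathbf z_{\mathrm d}^{\mathrm q}$ needed to pass between $(\mathbf x,\mathbf z)$ and $(\mathbf x,\mathbf e)$, and the bound $W\lesssim\|\mathbf x-\mathbf x_{\mathrm d}^{\ast}\|^2$. Neither property is a hypothesis. Assumption~\ref{asm:tube-smooth-dist} on $\mathbb R^{Nd}$ only makes $\nabla^2 f_i$ Lipschitz, which still lets $\|\nabla^2 f_i\|$ grow linearly (a term $\|\mathbf s\|^3$ is allowed). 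Likewise, $\mathbf G_{\mathrm d}$ is only $C^1$ with GES output dynamics, and the cubic law of Remark~\ref{rem:G-choice} qualifies. Since $\partial\mathbf u_{\mathrm d}^{\mathrm q}/\partial\mathbf x=-(\mathbf M_{\mathrm d}|_{\mathcal A_{\mathrm d}})^{-1}(\mathbf A_{\mathrm d}\nabla^2 f+D\mathbf G_{\mathrm d}\,\mathbf A_{\mathrm d})$, your constants $d_x,d_w$ then grow with the state, and the estimates give no $\tau^\ast$ independent of the initial condition. You also cannot fix a compact set first and then shrink $\tau$. Your $\mathbf e(0)$ contains $\mathbf K_p\mathbf y_{\mathrm d}(\mathbf x(0))=O(1/\tau)$, so the sublevel set of your $V$ through a given initial state grows as $\tau\to0$. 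You must add $\sup\|\nabla^2 f_i\|<\infty$ and $\sup\|D\mathbf G_{\mathrm d}\|<\infty$ as hypotheses; the paper's short proof relies on the same bounds tacitly.

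\emph{Second gap: the global lower bound in (iii).} Using a global quadratic-growth estimate is the right idea, since the finite-time-entry argument in the proof of Theorem~\ref{thm:sp-dist-enlarged-roa} does not by itself give a uniform constant. But Assumption~\ref{asm:manifold-pl-dist} is anchored at $F^\ast:=F(\mathbf s_{\mathrm d}^{\ast})$, not at $\inf_{\Omega_{\mathrm d}}F$, so it does not even give $V_F\ge0$. The path-length proof of PL$\,\Rightarrow\,$QG integrates $\frac{d}{dt}\sqrt{F-F^\ast}\le-\sqrt{\mu_{\mathrm d}/2}\,\|\dot{\mathbf s}\|$ along $\dot{\mathbf s}=-\Pi_{\mathcal T}\nabla F$, with $\mathcal T:=T_{\mathbf s}\Omega_{\mathrm d}$. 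It only yields $F(\mathbf s)-F^\ast\ge\frac{\mu_{\mathrm d}}{2}\operatorname{dist}^2(\mathbf s,S)$, where $S:=\{\mathbf s\in\Omega_{\mathrm d}:F(\mathbf s)\le F^\ast\}$.

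The missing fact $S=\{\mathbf s_{\mathrm d}^{\ast}\}$ needs a deformation (mountain-pass) argument, not a connectedness remark. One way to do it:
\begin{itemize}
\item By Assumption~\ref{asm:growth}, $F\ge F^\ast+\rho_{\mathrm d}r^2/2$ on a small sphere $\Sigma_r\subset\Omega_{\mathrm d}$ around $\mathbf s_{\mathrm d}^{\ast}$, and $S$ meets the closed ball only at $\mathbf s_{\mathrm d}^{\ast}$.
\item Suppose $\mathbf s_1\in S\setminus\{\mathbf s_{\mathrm d}^{\ast}\}$. Take a path from $\mathbf s_{\mathrm d}^{\ast}$ to $\mathbf s_1$ and deform it along the flow of $-\Pi_{\mathcal T}\nabla F/(1+\|\Pi_{\mathcal T}\nabla F\|)$.
\item PL gives $\|\Pi_{\mathcal T}\nabla F\|^2\ge\mu_{\mathrm d}\rho_{\mathrm d}r^2$ wherever $F\ge F^\ast+\rho_{\mathrm d}r^2/2$. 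Hence after a finite time the whole deformed path lies strictly below that level.
\item Meanwhile $\mathbf s_{\mathrm d}^{\ast}$ stays fixed, and the orbit of $\mathbf s_1$ stays in $S$ outside the ball. The deformed path must therefore still cross $\Sigma_r$, which is a contradiction.
\end{itemize}
With $S=\{\mathbf s_{\mathrm d}^{\ast}\}$ you get $F(\mathbf s)-F^\ast\ge\frac{\mu_{\mathrm d}}{2}\|\mathbf s-\mathbf s_{\mathrm d}^{\ast}\|^2$ on all of $\Omega_{\mathrm d}$. This gives both $V_F\ge0$ and the lower half of your sandwich.
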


\begin{proof}
Under affine constraints, the constraint Jacobians and the matrices
defining the boundary-layer dynamics are constant. Hence,
Assumption~\ref{asm:tube-reg-dist} and the boundary-layer stability hold
globally. Given that Assumption~\ref{asm:tube-smooth-dist} holds globally, the result then follows by applying the proof of
Theorem~\ref{thm:sp-dist-enlarged-roa} to the whole admissible state space.
\end{proof}

Furthermore, for general nonlinear equality constraints, if
Assumptions~\ref{asm:tube-smooth-dist} and
\ref{asm:tube-reg-dist} are imposed globally, then the same argument may
also yield global exponential convergence. However, for nonconvex objectives
with nonlinear equality constraints, global Lipschitz conditions on
\(\mathbb R^{Nd}\) and global uniform full-rank conditions are very
strong. Such assumptions exclude many nonlinear constraint structures
with singularities or unbounded curvature. For this reason, we do not
state global convergence under general nonlinear equality constraints as
a formal result.

\section{Discrete-Time Implementation of the Proposed SP Realizations}
\label{sec:discrete-implementation}

The proposed SP realizations in Sections~\ref{sec:sp-realization-dist} and
\ref{sec:sp-realization-coupled} are continuous-time distributed dynamics.
For implementation on digital platforms, we present their explicit Euler
discretizations in this section. Let \(T>0\) denote the sampling period. For any continuous-time
signal \(\xi(t)\), we write \(\xi^k\) for its discrete-time approximation at
\(t=kT\). 

We first consider the SP realization for \(\mathcal P_{\mathrm d}\). 
The explicit discrete-time implementation of \eqref{eq:node-level-dist} is given in
Algorithm~\ref{alg:euler-sp-dist}: Feedback linearization and Singular perturbation-based distributed nonconvex optimization with Nonlinear
Distributed Equality constraints (FS-NDE).

\begin{algorithm}[t]
\caption{Feedback linearization and Singular perturbation-based distributed nonconvex optimization with Nonlinear Distributed Equality constraints (FS-NDE)}
\label{alg:euler-sp-dist}
\begin{algorithmic}[1]
\State \textbf{Initialization:} Each agent \(i\) chooses
\(\mathbf x_i^0\in\mathbb R^d\), \(\mathbf z_{c,i}^0\in\mathbb R^d\), and
\(\mathbf z_{h,i}^0\in\mathbb R^{r_i}\), with
\(\mathbf z_c^0\in\operatorname{Im}\mathbf L\).
\For{\(k=0,1,2,\ldots\)}
    \State Each agent \(i\) receives \(\mathbf x_j^k\) from all
    \(j\in\mathcal N_i\).
    \State Compute
    \[
    \ba
        &\mathbf y_{g,i}^k
        =
        \sum_{j\in\mathcal N_i}a_{ij}(\mathbf x_i^k-\mathbf x_j^k),
        \qquad
        &&\mathbf y_{h,i}^k
        =
        \mathbf h_i^{\mathrm d}(\mathbf x_i^k).\\
        &\mathbf v_i^k
        =
        K_{pc}\mathbf y_{g,i}^k+\mathbf z_{c,i}^k,
        \qquad
        &&\bm\mu_i^k
        =
        K_{ph}\mathbf y_{h,i}^k+\mathbf z_{h,i}^k .
        \ea
    \]
    \State Update
    \[
    \begin{aligned}
        \mathbf x_i^{k+1}
        &=
        \mathbf x_i^k
        -
        T\Big(
            \nabla f_i(\mathbf x_i^k)
            +
            \mathbf v_i^k
            +
            \mathbf J_i^{\mathrm d}(\mathbf x_i^k)^{\top}\bm\mu_i^k
        \Big),\\
        \mathbf z_{c,i}^{k+1}
        &=
        \mathbf z_{c,i}^k
        -
        T K_{pc}\mathbf G_{g,i}(\mathbf y_{g,i}^k),\\
        \mathbf z_{h,i}^{k+1}
        &=
        \mathbf z_{h,i}^k
        -
        T K_{ph}\mathbf G_{h,i}(\mathbf y_{h,i}^k).
    \end{aligned}
    \]
\EndFor
\end{algorithmic}
\end{algorithm}

The following theorem states that the discrete-time implementation, FS-NDE, preserves the local
exponential convergence of the continuous-time SP dynamics.

\begin{theorem}
\label{thm:euler-sp-dist}
Consider problem \(\mathcal P_{\mathrm d}\). Suppose the conditions of
Theorem~\ref{thm:sp-dist} hold. Fix any
\(\tau\in(0,\tau^\ast)\), where \(\tau^\ast\) is given in
Theorem~\ref{thm:sp-dist}, and let \(\alpha_{\mathrm{SP}}\) be an admissible
continuous-time convergence rate in Theorem~\ref{thm:sp-dist}. Then, for any
\(\alpha_{\mathrm E}\in(0,\alpha_{\mathrm{SP}})\), there exists
\(T_{\mathrm d}^\ast>0\) such that, for every
\(T\in(0,T_{\mathrm d}^\ast)\), FS-NDE admits a
locally unique equilibrium
\((\mathbf x_{\mathrm d}^{\ast},\mathbf z^\ast)\), which is locally
exponentially stable. Specifically, there exist constants
\(C_{\mathrm E}>0\) and \(\delta_{\mathrm E}>0\) such that, if
\[
    \|\mathbf x^0-\mathbf x_{\mathrm d}^{\ast}\|
    +
    \|\mathbf z^0-\mathbf z^\ast\|
    <
    \delta_{\mathrm E},
\]
then the discrete-time trajectory satisfies
\[
\begin{aligned}
    &\|\mathbf x^k-\mathbf x_{\mathrm d}^{\ast}\|
    +
    \|\mathbf z^k-\mathbf z^\ast\|
    +
    \|\mathbf L\mathbf x^k\|
    +
    \|\mathbf h^{\mathrm d}(\mathbf x^k)\|  \\
    &\qquad\le
    C_{\mathrm E}e^{-\alpha_{\mathrm E}kT}
    \left(
        \|\mathbf x^0-\mathbf x_{\mathrm d}^{\ast}\|
        +
        \|\mathbf z^0-\mathbf z^\ast\|
    \right),
    \qquad k=0,1,2,\ldots .
\end{aligned}
\]
\end{theorem}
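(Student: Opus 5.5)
The plan is to treat FS-NDE as the explicit Euler map $\Phi_T(\bm\xi)=\bm\xi+T\,\mathbf F(\bm\xi)$, where $\bm\xi:=\col(\mathbf x,\mathbf z)$ and $\mathbf F$ is the vector field of \eqref{eq:sp-compact-dist} for the fixed $\tau$, and to derive discrete exponential stability from the exponential estimate of Theorem~\ref{thm:sp-dist} through a converse-Lyapunov argument.

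\emph{Equilibria and invariance.} A point $\bm\xi$ is a fixed point of $\Phi_T$ if and only if $\mathbf F(\bm\xi)=\mathbf 0$. Hence the equilibrium $(\mathbf x_{\mathrm d}^\ast,\mathbf z^\ast)$ and its local uniqueness carry over directly from Theorem~\ref{thm:sp-dist}, and this holds for every $T>0$. Since $\mathbf G_g$ maps into $\operatorname{Im}\mathbf L$, the update $\mathbf z_c^{k+1}=\mathbf z_c^k-TK_{pc}\mathbf G_g(\mathbf L\mathbf x^k)$ keeps $\mathbf z_c^k\in\operatorname{Im}\mathbf L$. We therefore restrict the state to the invariant affine set $\mathcal S:=\mathbb R^{Nd}\times\operatorname{Im}\mathbf L\times\mathbb R^r$. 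This restriction is needed because along $\ker\mathbf L$ the $\mathbf z_c$-direction is neutral, so all estimates below are taken relative to $\mathcal S$.

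\emph{Converse Lyapunov function.} Theorem~\ref{thm:sp-dist} gives, on $\mathcal S$ near the equilibrium, $\|\bm\phi_t(\bm\xi)-\bm\xi^\ast\|\le C_{\mathrm{SP}}e^{-\alpha_{\mathrm{SP}}t}\|\bm\xi-\bm\xi^\ast\|$, where $\bm\phi_t$ is the flow of $\mathbf F$. By Assumption~\ref{asm:smooth}, $\mathbf F$ is $C^1$ with locally Lipschitz derivative, so a standard construction applies:
\[
V(\bm\xi):=\int_0^{\Theta}\|\bm\phi_s(\bm\xi)-\bm\xi^\ast\|^2\,ds .
\]
Choose $\Theta$ with $C_{\mathrm{SP}}^2e^{-2\alpha_{\mathrm{SP}}\Theta}$ small. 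This $V$ satisfies $c_1\|\bm\xi-\bm\xi^\ast\|^2\le V\le c_2\|\bm\xi-\bm\xi^\ast\|^2$. Its derivative along the flow is $\nabla V\cdot\mathbf F\le-2\alpha'V$ for any $\alpha'<\alpha_{\mathrm{SP}}$, provided $\Theta$ is taken large. Finally, $\nabla V$ is locally Lipschitz with $\|\nabla V\|\le c_3\|\bm\xi-\bm\xi^\ast\|$. An alternative route is to linearize, since the linearization of $\mathbf F$ at $\bm\xi^\ast$ restricted to $\mathcal S$ is Hurwitz with spectral abscissa at most $-\alpha_{\mathrm{SP}}$, and then take a quadratic Lyapunov function for $\mathbf F$. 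I prefer the converse construction because it keeps the rate bookkeeping explicit.

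\emph{One-step decrease.} By Taylor's theorem,
\[
V(\Phi_T(\bm\xi))\le V(\bm\xi)+T\nabla V(\bm\xi)\cdot\mathbf F(\bm\xi)+\tfrac{L_V}{2}T^2\|\mathbf F(\bm\xi)\|^2 .
\]
Using $\|\mathbf F(\bm\xi)\|\le L_F\|\bm\xi-\bm\xi^\ast\|$ on a ball $\mathcal B_r$, this yields
\[
V(\Phi_T(\bm\xi))\le\big(1-2\alpha'T+c_4T^2\big)V(\bm\xi).
\]
Given $\alpha_{\mathrm E}<\alpha'<\alpha_{\mathrm{SP}}$, pick $T_{\mathrm d}^\ast$ so that $1-2\alpha'T+c_4T^2\le e^{-2\alpha_{\mathrm E}T}$ for all $T<T_{\mathrm d}^\ast$. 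This is possible because $e^{-2\alpha_{\mathrm E}T}=1-2\alpha_{\mathrm E}T+O(T^2)$. Next choose $\delta_{\mathrm E}$ small enough that the sublevel set $\{V\le c_2\delta_{\mathrm E}^2\}\cap\mathcal S$ lies inside $\mathcal B_r$; then the iterates never leave it. Iterating gives $\|\bm\xi^k-\bm\xi^\ast\|\le\sqrt{c_2/c_1}\,e^{-\alpha_{\mathrm E}kT}\|\bm\xi^0-\bm\xi^\ast\|$. The residual terms $\|\mathbf L\mathbf x^k\|$ and $\|\mathbf h^{\mathrm d}(\mathbf x^k)\|$ are Lipschitz in $\mathbf x^k-\mathbf x_{\mathrm d}^\ast$ and vanish at the optimizer, so they obey the same bound after enlarging $C_{\mathrm E}$.

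\emph{Main obstacle.} The hardest step is making the Lyapunov construction on the restricted space $\mathcal S$ rigorous with the exact rate $\alpha'$ arbitrarily close to $\alpha_{\mathrm{SP}}$. This requires verifying that $V$ is $C^1$ with Lipschitz gradient, which means differentiating the flow with respect to the initial state and bounding the sensitivity $\partial\bm\phi_s/\partial\bm\xi$ over $[0,\Theta]$ using the local Lipschitz property of $\nabla^2 f_i$ and $\mathbf J_i^{\mathrm d}$. The constants $L_F$ and $c_4$ scale like $1/\tau$ through $K_{pc}$ and $K_{ph}$. This causes no difficulty because $\tau$ is fixed before $T_{\mathrm d}^\ast$ is chosen, but it does mean $T_{\mathrm d}^\ast=O(\tau)$.
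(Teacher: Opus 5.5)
\textbf{Gap in the Lyapunov construction.} Your overall plan is sound: view FS-NDE as the Euler map on the invariant subspace $\mathbb R^{Nd}\times\operatorname{Im}\mathbf L\times\mathbb R^r$, inherit a Lyapunov function from the continuous-time estimate, and prove a one-step decrease for small $T$. However, the Lyapunov function you write down does not have the decay property you attribute to it. Write the estimate of Theorem~\ref{thm:sp-dist} as $\|\bm\phi_t(\bm\xi)-\bm\xi^\ast\|\le Ce^{-\alpha_{\mathrm{SP}}t}\|\bm\xi-\bm\xi^\ast\|$. For the unweighted $V=\int_0^\Theta\|\bm\phi_s(\bm\xi)-\bm\xi^\ast\|^2ds$, the derivative along the flow is $\|\bm\phi_\Theta(\bm\xi)-\bm\xi^\ast\|^2-\|\bm\xi-\bm\xi^\ast\|^2$. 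The exponential estimate only gives $V\le\frac{C^2}{2\alpha_{\mathrm{SP}}}\|\bm\xi-\bm\xi^\ast\|^2$, so you get $\dot V\le-\frac{2\alpha_{\mathrm{SP}}}{C^2}(1-C^2e^{-2\alpha_{\mathrm{SP}}\Theta})V$. Enlarging $\Theta$ does not remove the factor $C^{-2}$, and this is not slack in the estimate. Take a non-normal linear flow with decay rate $\alpha_{\mathrm{SP}}$, e.g.\ an upper-triangular $2\times 2$ matrix with diagonal $-\alpha_{\mathrm{SP}},-\alpha_{\mathrm{SP}}-1$ and a large off-diagonal entry. Then, for suitable $\bm\xi$, $\int_0^\infty\|e^{\mathbf As}\bm\xi\|^2ds$ exceeds $\|\bm\xi\|^2/(2\alpha')$ by an arbitrary factor, so $\dot V\le-2\alpha'V$ genuinely fails for $\alpha'$ near $\alpha_{\mathrm{SP}}$. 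The theorem asserts every $\alpha_{\mathrm E}<\alpha_{\mathrm{SP}}$, so this is exactly where your argument breaks: as written it certifies only a rate of order $\alpha_{\mathrm{SP}}/C^2$.

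\textbf{Repair.} Add an exponential weight. Take $\alpha'\in(\alpha_{\mathrm E},\alpha_{\mathrm{SP}})$ and $V(\bm\xi):=\int_0^\Theta e^{2\alpha's}\|\bm\phi_s(\bm\xi)-\bm\xi^\ast\|^2ds$. Its derivative along the flow equals $-2\alpha'V+e^{2\alpha'\Theta}\|\bm\phi_\Theta(\bm\xi)-\bm\xi^\ast\|^2-\|\bm\xi-\bm\xi^\ast\|^2$, which is $\le-2\alpha'V$ once $C^2e^{-2(\alpha_{\mathrm{SP}}-\alpha')\Theta}\le 1$. The bound $V\le\frac{C^2}{2(\alpha_{\mathrm{SP}}-\alpha')}\|\bm\xi-\bm\xi^\ast\|^2$ still holds.

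\textbf{Secondary point: smoothness.} Assumption~\ref{asm:smooth} does not make $\partial\mathbf F/\partial\bm\xi$ locally Lipschitz. It contains $\sum_j[\bm\mu]_j\nabla^2[\mathbf h^{\mathrm d}]_j$ and $\mathbf G_g'$, $\mathbf G_h'$, which are merely continuous: $\mathbf h_i^{\mathrm d}\in C^2$ and $\mathbf G_{\mathrm d}\in C^1$, and only $\mathbf J_i^{\mathrm d}$ itself is assumed Lipschitz. So your quadratic Taylor bound with a constant $L_V$ is not justified. You do not need it. Local Lipschitzness of $\mathbf F$ gives $\|\bm\xi+T\mathbf F(\bm\xi)-\bm\phi_T(\bm\xi)\|\le cT^2\|\bm\xi-\bm\xi^\ast\|$ and $|V(\mathbf a)-V(\mathbf b)|\le c(\|\mathbf a-\bm\xi^\ast\|+\|\mathbf b-\bm\xi^\ast\|)\|\mathbf a-\mathbf b\|$. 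Hence $V(\Phi_T(\bm\xi))\le(e^{-2\alpha'T}+c'T^2)V(\bm\xi)$, and your sublevel-set and residual arguments then go through unchanged.

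\textbf{Comparison with the paper.} The paper follows the route you list as an alternative, in discrete form. The Jacobian $\mathbf R_{\mathrm d}$ of the SP vector field at $\bm\xi^\ast$, restricted to the invariant subspace, is Hurwitz, with spectral abscissa at most $-\alpha_{\mathrm{SP}}$ (the quantitative fact the rate bookkeeping actually uses). Hence $\rho(\mathbf I+T\mathbf R_{\mathrm d})<e^{-\bar\alpha_{\mathrm E}T}$ for some $\bar\alpha_{\mathrm E}\in(\alpha_{\mathrm E},\alpha_{\mathrm{SP}})$ and all small $T$. A Stein equation then gives a quadratic norm in which the Euler map contracts at rate $e^{-\alpha_{\mathrm E}T}$, with the $o(\|\bm\xi-\bm\xi^\ast\|)$ remainder absorbed in a small ball. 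That route needs only $C^1$ regularity and no flow-sensitivity bounds, so it is shorter. Your repaired argument needs only a Lipschitz $\mathbf F$ and transfers the nonlinear estimate directly. It would therefore also apply where exponential estimates hold on larger regions and linearization at $\bm\xi^\ast$ says nothing.
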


We next discretize the SP realization for the coupled problem
\(\mathcal P_{\mathrm c}\). 
The explicit discrete-time implementation of \eqref{eq:node-level-coupled} is given in
Algorithm~\ref{alg:euler-sp-coupled}: Feedback linearization and Singular perturbation-based distributed nonconvex
optimization with Nonlinear Coupled Equality constraints (FS-NCE).

\begin{algorithm}[t]
\caption{Feedback linearization and Singular perturbation-based distributed nonconvex optimization with Nonlinear Coupled Equality constraints (FS-NCE)}
\label{alg:euler-sp-coupled}
\begin{algorithmic}[1]
\State \textbf{Initialization:} Each agent \(i\) chooses
\(\mathbf x_i^0\in\mathbb R^d\), \(\bm\zeta_i^0\in\mathbb R^q\),
\(\mathbf z_{c,i}^0\in\mathbb R^d\), and
\(\mathbf z_{h,i}^0\in\mathbb R^q\), with
\(\bm\zeta^0\in\operatorname{Im}\mathbf L_q\) and
\(\mathbf z_c^0\in\operatorname{Im}\mathbf L\).
\For{\(k=0,1,2,\ldots\)}
    \State Each agent \(i\) receives \(\mathbf x_j^k\) and \(\bm\zeta_j^k\) from all \(j\in\mathcal N_i\).
    \State Compute
    \[
    \ba
        &\mathbf y_{g,i}^k
        =
        \sum_{j\in\mathcal N_i}a_{ij}(\mathbf x_i^k-\mathbf x_j^k),\qquad
        &&\mathbf y_{h,i}^k
        =
        \mathbf h_i^{\mathrm c}(\mathbf x_i^k)
        +
        \sum_{j\in\mathcal N_i}a_{ij}(\bm\zeta_i^k-\bm\zeta_j^k).\\
        &\mathbf v_i^k
        =
        K_{pc}\mathbf y_{g,i}^k+\mathbf z_{c,i}^k,
        \qquad
        &&\bm\mu_i^k
        =
        K_{ph}\mathbf y_{h,i}^k+\mathbf z_{h,i}^k .
        \ea
    \]
    \State Each agent \(i\) receives 
    \(\bm\mu_j^k\) from all \(j\in\mathcal N_i\).
    \State Update
    \[
    \begin{aligned}
        \mathbf x_i^{k+1}
        &=
        \mathbf x_i^k
        -
        T\Big(
            \nabla f_i(\mathbf x_i^k)
            +
            \mathbf v_i^k
            +
            \mathbf J_i^{\mathrm c}(\mathbf x_i^k)^{\top}\bm\mu_i^k
        \Big),\\
        \bm\zeta_i^{k+1}
        &=
        \bm\zeta_i^k
        -
        T\sum_{j\in\mathcal N_i}a_{ij}(\bm\mu_i^k-\bm\mu_j^k),\\
        \mathbf z_{c,i}^{k+1}
        &=
        \mathbf z_{c,i}^k
        -
        T K_{pc}\mathbf G_{g,i}(\mathbf y_{g,i}^k),\\
        \mathbf z_{h,i}^{k+1}
        &=
        \mathbf z_{h,i}^k
        -
        T K_{ph}\mathbf G_{h,i}(\mathbf y_{h,i}^k).
    \end{aligned}
    \]
\EndFor
\end{algorithmic}
\end{algorithm}

The convergence of FS-NCE is stated below.

\begin{theorem}
\label{thm:euler-sp-coupled}
Consider problem \(\mathcal P_{\mathrm c}\). Suppose the conditions of
Theorem~\ref{thm:sp-coupled} hold. Fix any
\(\tau\in(0,\tau^\ast)\), where \(\tau^\ast\) is given in
Theorem~\ref{thm:sp-coupled}, and let \(\alpha_{\mathrm{SP}}\) be an admissible
continuous-time convergence rate in Theorem~\ref{thm:sp-coupled}. Then, for any
\(\alpha_{\mathrm E}\in(0,\alpha_{\mathrm{SP}})\), there exists
\(T_{\mathrm c}^\ast>0\) such that, for every
\(T\in(0,T_{\mathrm c}^\ast)\), FS-NCE admits a
locally unique equilibrium
\((\mathbf x_{\mathrm c}^{\ast},\bm\zeta^\ast,\mathbf z^\ast)\), which is
locally exponentially stable. Specifically, there exist constants
\(C_{\mathrm E}>0\) and \(\delta_{\mathrm E}>0\) such that, if
\[
    \|\mathbf x^0-\mathbf x_{\mathrm c}^{\ast}\|
    +
    \|\bm\zeta^0-\bm\zeta^\ast\|
    +
    \|\mathbf z^0-\mathbf z^\ast\|
    <
    \delta_{\mathrm E},
\]
then the discrete-time trajectory satisfies
\[
\begin{aligned}
    &\|\mathbf x^k-\mathbf x_{\mathrm c}^{\ast}\|
    +
    \|\bm\zeta^k-\bm\zeta^\ast\|
    +
    \|\mathbf z^k-\mathbf z^\ast\|
    +
    \|\mathbf L\mathbf x^k\| 
    +
    \|\mathbf h^{\mathrm c}(\mathbf x^k)+\mathbf L_q\bm\zeta^k\|  \\
    &\qquad\le
    C_{\mathrm E}e^{-\alpha_{\mathrm E}kT}
    \left(
        \|\mathbf x^0-\mathbf x_{\mathrm c}^{\ast}\|
        +
        \|\bm\zeta^0-\bm\zeta^\ast\|
        +
        \|\mathbf z^0-\mathbf z^\ast\|
    \right),
    \qquad k=0,1,2,\ldots .
\end{aligned}
\]
\end{theorem}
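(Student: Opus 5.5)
The plan is to treat FS-NCE as the explicit Euler scheme $\bm\eta^{k+1}=\bm\eta^k+T\,\Phi_\tau(\bm\eta^k)$ for the continuous-time SP realization \eqref{eq:sp-compact-coupled}. Here $\bm\eta:=\operatorname{col}(\mathbf x,\bm\zeta,\mathbf z)$ and $\Phi_\tau$ is its right-hand side with $\tau\in(0,\tau^\ast)$ fixed. We then transfer the exponential estimate of Theorem~\ref{thm:sp-coupled} to the discrete iteration through a converse Lyapunov function. This is the same route one would use for Theorem~\ref{thm:euler-sp-dist}.

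\textbf{Step 1: invariance and equilibria.} The updates of $\bm\zeta$ and $\mathbf z_c$ are increments in $\operatorname{Im}\mathbf L_q$ and $\operatorname{Im}\mathbf L$: the $\bm\zeta$-increment is $-T\mathbf L_q\bm\mu^k$, and the $\mathbf z_c$-increment is $-TK_{pc}\mathbf G_g(\mathbf L\mathbf x^k)$, which lies in $\operatorname{Im}\mathbf L$ by the codomain of $\mathbf G_{\mathrm c}$. Hence the admissible affine subspace $\mathcal S:=\{\bm\zeta\in\operatorname{Im}\mathbf L_q,\ \mathbf z_c\in\operatorname{Im}\mathbf L\}$ is invariant. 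On $\mathcal S$, a point $\bm\eta$ is a fixed point of the Euler map if and only if $\Phi_\tau(\bm\eta)=\mathbf 0$, since $T>0$. Therefore the locally unique equilibrium of FS-NCE coincides with $(\mathbf x_{\mathrm c}^\ast,\bm\zeta^\ast,\mathbf z^\ast)$ from Theorem~\ref{thm:sp-coupled}, and this holds independently of $T$.

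\textbf{Step 2: converse Lyapunov function.} Theorem~\ref{thm:sp-coupled} gives, on $\mathcal S$ near $\bm\eta^\ast$,
\[
\|\bm\eta(t)-\bm\eta^\ast\|\le C_{\mathrm{SP}}e^{-\alpha_{\mathrm{SP}}t}\|\bm\eta(0)-\bm\eta^\ast\|.
\]
Assumption~\ref{asm:smooth} makes $\Phi_\tau$ $C^1$ with locally Lipschitz derivative. Take any $\alpha'\in(\alpha_{\mathrm E},\alpha_{\mathrm{SP}})$ and define
\[
V(\bm\eta):=\int_0^{\bar T}e^{2\alpha' s}\|\phi_s(\bm\eta)-\bm\eta^\ast\|^2\,ds ,
\]
where $\phi_s$ is the flow and $\bar T$ is large. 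The standard converse construction (Khalil, Thm.~4.14 type) then yields the following on a ball $B_r\cap\mathcal S$: sandwich bounds $c_1\|\bm\eta-\bm\eta^\ast\|^2\le V\le c_2\|\bm\eta-\bm\eta^\ast\|^2$, the decay $\nabla V\cdot\Phi_\tau\le-2\alpha'' V$ for some $\alpha''\in(\alpha_{\mathrm E},\alpha')$ once $\bar T$ is large, and the gradient bound $\|\nabla V\|\le c_3\|\bm\eta-\bm\eta^\ast\|$ with $\nabla V$ locally Lipschitz with constant $c_4$. The last two properties use the $C^1$ dependence of the flow on initial data together with the local Lipschitz continuity of $D\Phi_\tau$. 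Because the Lyapunov function must be built on the invariant subspace $\mathcal S$, we regard $\bm\eta-\bm\eta^\ast$ as living in the corresponding linear subspace.

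\textbf{Step 3: one-step decrease.} By Taylor's theorem with the Lipschitz gradient,
\[
V(\bm\eta^{k+1})\le V(\bm\eta^k)+T\nabla V(\bm\eta^k)^\top\Phi_\tau(\bm\eta^k)+\tfrac{c_4}{2}T^2\|\Phi_\tau(\bm\eta^k)\|^2 .
\]
Using $\|\Phi_\tau(\bm\eta)\|\le L_\Phi\|\bm\eta-\bm\eta^\ast\|$ on $B_r$, this gives
\[
V(\bm\eta^{k+1})\le\big(1-2\alpha''T+c_5T^2\big)V(\bm\eta^k).
\]
Choose $T_{\mathrm c}^\ast$ so that $1-2\alpha''T+c_5T^2\le e^{-2\alpha_{\mathrm E}T}$ for all $T<T_{\mathrm c}^\ast$. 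This is possible because $\alpha''>\alpha_{\mathrm E}$ and $e^{-2\alpha_{\mathrm E}T}=1-2\alpha_{\mathrm E}T+O(T^2)$.

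\textbf{Step 4: iteration and output bounds.} Pick $\delta_{\mathrm E}$ so that the sublevel set $\{V<c_1r^2\}$ contains the initial ball. Induction then keeps every iterate in $B_r$ and gives
\[
\|\bm\eta^k-\bm\eta^\ast\|\le\sqrt{c_2/c_1}\,e^{-\alpha_{\mathrm E}kT}\|\bm\eta^0-\bm\eta^\ast\| .
\]
The output terms are controlled by Lipschitz estimates: $\|\mathbf L\mathbf x^k\|\le\|\mathbf L\|\|\mathbf x^k-\mathbf x_{\mathrm c}^\ast\|$, and $\|\mathbf h^{\mathrm c}(\mathbf x^k)+\mathbf L_q\bm\zeta^k\|$ is bounded by a multiple of $\|\bm\eta^k-\bm\eta^\ast\|$, since this residual vanishes at the equilibrium. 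Absorbing these constants into $C_{\mathrm E}$ completes the bound.

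The main obstacle is Step 2. The estimate of Theorem~\ref{thm:sp-coupled} is stated with a specific norm combination and on the subspace $\mathcal S$, so the converse function must be built there with uniform constants. Its Lipschitz-gradient property also requires second-order regularity of the flow, which is only local. If that proves delicate, I would fall back on linearization: the Jacobian $D\Phi_\tau(\bm\eta^\ast)$ restricted to $\mathcal S$ has spectral abscissa at most $-\alpha_{\mathrm{SP}}$, which follows from the exponential estimate applied near the equilibrium. Then $\mathbf I+TD\Phi_\tau(\bm\eta^\ast)$ has spectral radius at most $e^{-\alpha' T}$ for small $T$, and we can conclude via a quadratic Lyapunov function $P$, with the nonlinear remainder handled by the local Lipschitz continuity of $D\Phi_\tau$.
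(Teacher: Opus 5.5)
Your proof is correct, and the fallback in your last paragraph is precisely the paper's argument. The paper works on the invariant subspace $\mathcal S_{\mathrm c}$ and infers from Theorem~\ref{thm:sp-coupled} that the Jacobian $\mathbf R_{\mathrm c}$ at $\bm\xi_{\mathrm c}^\ast$ is Hurwitz. It takes $T$ small so that $\rho(\mathbf I+T\mathbf R_{\mathrm c})<e^{-\bar\alpha_{\mathrm E}T}$ with $\bar\alpha_{\mathrm E}=(\alpha_{\mathrm E}+\alpha_{\mathrm{SP}})/2$, and solves $\mathbf R_{\mathrm c,T}^\top\mathbf P\mathbf R_{\mathrm c,T}-e^{-2\bar\alpha_{\mathrm E}T}\mathbf P=-\mathbf I$. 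It then absorbs the first-order remainder of the Euler map in the $\mathbf P$-norm and bounds the residuals exactly as in your Step~4.

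Your primary route (Steps~2--3) is genuinely different. Instead of going through the spectrum of the linearization, you transfer the nonlinear exponential estimate itself, via a converse Lyapunov function of the continuous flow, and get a one-step decrease by Taylor expansion. What this buys is uniformity: $V$, $c_1$, $c_2$, $r$, and hence $\delta_{\mathrm E}$ and $C_{\mathrm E}$, do not depend on $T\in(0,T_{\mathrm c}^\ast)$. The paper's constants come from $\mathbf P_{\mathrm c,T}$, and their dependence on $T$ is not tracked. The paper's route is shorter and uses only $C^1$ regularity plus the first-order expansion at the equilibrium.

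One justification in your Step~2 needs fixing. Assumption~\ref{asm:smooth} does not make $D\Phi_\tau$ locally Lipschitz. $D\Phi_\tau$ contains $\partial(\tilde{\mathbf J}^{\mathrm c}(\mathbf x)^\top\bm\mu)/\partial\mathbf x$, i.e.\ second derivatives of $\mathbf h_i^{\mathrm c}$, and also $D\mathbf G_{\mathrm c}$. Both are only continuous, since $\mathbf h_i^{\mathrm c}\in C^2$ and $\mathbf G_{\mathrm c}\in C^1$; only $\nabla^2 f_i$ and $\mathbf J_i^{\mathrm c}$ are assumed Lipschitz. So your constant $c_4$ and the $c_5T^2$ term are unsupported as stated.

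The repair is routine:
\begin{enumerate}
\item $D\phi_s$ is uniformly continuous on $[0,\bar T]\times\overline{B}_{2r}$.
\item Split $\nabla V(\bm\eta')-\nabla V(\bm\eta)$ into a part containing $\phi_s(\bm\eta')-\phi_s(\bm\eta)$ and a part containing $(D\phi_s(\bm\eta')-D\phi_s(\bm\eta))^\top(\phi_s(\bm\eta)-\bm\eta^\ast)$.
\item This gives $\|\nabla V(\bm\eta')-\nabla V(\bm\eta)\|\le\epsilon(T)\|\bm\eta-\bm\eta^\ast\|$ whenever $\|\bm\eta'-\bm\eta\|\le TL_\Phi\|\bm\eta-\bm\eta^\ast\|$, with $\epsilon(T)\to0$.
\item Hence $V(\bm\eta^{k+1})\le(1-2\alpha''T+o(T))V(\bm\eta^k)$ uniformly on the ball, which is all you need to choose $T_{\mathrm c}^\ast$.
\end{enumerate}
Similarly, your fallback needs only differentiability of $\Phi_\tau$ at $\bm\eta^\ast$, not a Lipschitz Jacobian.
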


\section{Numerical Simulations}
\label{sec:simulations}

In this section, we provide numerical simulations to illustrate the proposed
FL dynamics and its SP realization. 
In the following simulations, the ideal FL dynamics \eqref{eq:ideal-fl-algorithm-dist} and \eqref{eq:ideal-fl-algorithm-coupled} are implemented through their Euler-discretized forms, although the explicit discrete-time algorithm is not presented in the paper. Meanwhile, the proposed SP realizations are also implemented in their Euler-discretized forms, i.e., FS-NDE and FS-NCE.

\subsection{Small-scale implementation}
\label{subsec:small-scale-sim}

\begin{figure}[t]
    \centering
    \begin{subfloat}[\(\mathcal P_{\mathrm d}\) solved by FS-NDE.]
{\includegraphics[width=0.48\linewidth]{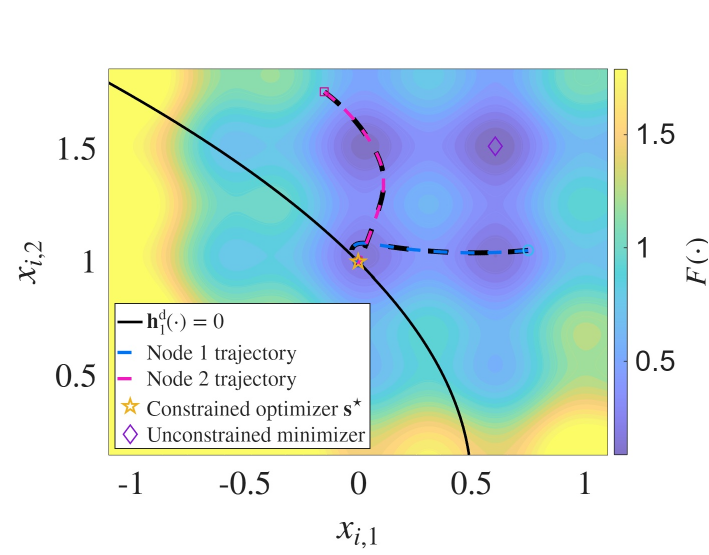}
        \label{fig:Pd-small}}
    \end{subfloat}
    \hfill
    \begin{subfloat}[\(\mathcal P_{\mathrm c}\) solved by FS-NCE.]
{\includegraphics[width=0.48\linewidth]{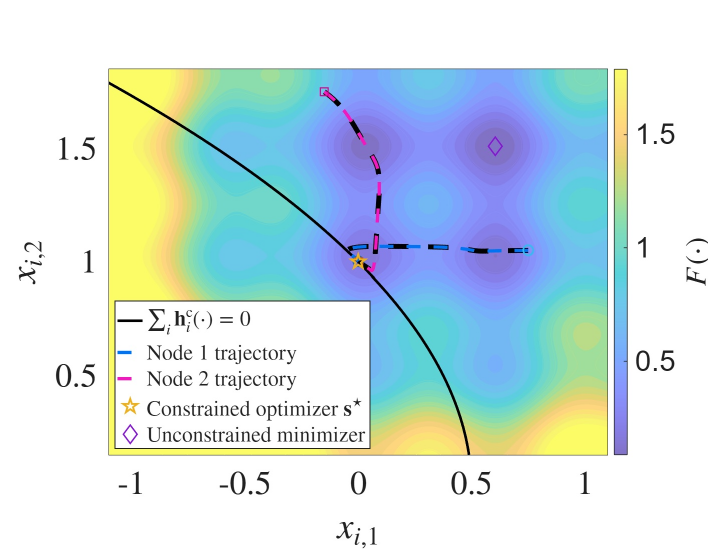}
        \label{fig:Pc-small}}
    \end{subfloat}
    \caption{Small-scale simulations of the proposed SP realizations with \(N=2\),
\(d=2\), and \(\tau=4\times10^{-2}\).}
    \label{fig:small-scale}
\end{figure}

To clearly illustrate the convergence behavior of the proposed algorithms, we first
consider small-scale numerical examples for \(\mathcal P_{\mathrm d}\) and
\(\mathcal P_{\mathrm c}\).
We consider a small-scale example with \(N=2\) agents and local dimension
\(d=2\). For each agent, let
\(
    \mathbf x_i
    =
    \col(x_{i,1},x_{i,2})\in\mathbb R^2
\) for $i=1,2$.
The objective function is selected as
\[
    f_i(\mathbf x_i)
    =
    \frac{1}{2}
    (\mathbf x_i-\mathbf a_i)^\top
    \mathbf Q
    (\mathbf x_i-\mathbf a_i)
    +
    A_1\big(1-\cos(k_1x_{i,1})\big)
    +
    A_2\big(1-\cos(k_2(x_{i,2}-1))\big),
\]
where
\(
    \mathbf Q=\mathrm{diag}(0.8,0.8),\ 
    A_1=0.09,\  A_2=0.07,\ 
    k_1=10,\ k_2=12,\ 
    \mathbf a_1=\col(0.43,1.31),\ 
    \mathbf a_2=\col(0.27,1.39).
\)
Due to the sinusoidal terms, the objective function is nonconvex and has
multiple local peaks and valleys in the displayed region. Moreover, the
unconstrained minimizer is not located on the equality-constrained feasible
manifold.

For problem \(\mathcal P_{\mathrm d}\), only the first agent is subject to a
local nonlinear equality constraint, i.e.,
\(
    h_1^{\mathrm d}(\mathbf x_1)
    =
    x_{1,1}
    +
    \frac{1}{2}x_{1,2}^2
    -
    \frac{1}{2}
    =
    0,
\)
while the second agent has no local equality constraint. Together with the
consensus constraint \(\mathbf x_1=\mathbf x_2\), the corresponding centralized
feasible set is
\[
    \Omega_{\mathrm d}
    =
    \left\{
    \mathbf s\in\mathbb R^2:
    s_1+\frac{1}{2}s_2^2-\frac{1}{2}=0
    \right\}.
\]
This feasible set is a one-dimensional nonlinear manifold. 

For problem \(\mathcal P_{\mathrm c}\), both agents have scalar nonlinear maps
and the equality constraint is imposed through their sum. Specifically, define
\(
    g(\mathbf s)
    =
    s_1+\frac{1}{2}s_2^2-\frac{1}{2}.
\)
We set
\(
    h_1^{\mathrm c}(\mathbf x_1)
    =
    0.6\,g(\mathbf x_1)+0.05,
\) and \(
    h_2^{\mathrm c}(\mathbf x_2)
    =
    0.4\,g(\mathbf x_2)-0.05 .
\)
The aggregate constraint is
\(
    h_1^{\mathrm c}(\mathbf x_1)
    +
    h_2^{\mathrm c}(\mathbf x_2)
    =
    0.
\)
Hence \(\mathcal P_{\mathrm c}\) has the same feasible
manifold as \(\mathcal P_{\mathrm d}\). 

Moreover, we set the desired output field $\mathbf G_{\mathrm d}(\mathbf y_{\mathrm d})=-k\mathbf y_{\mathrm d}$ and $\mathbf G_{\mathrm c}(\mathbf y_{\mathrm c})=-k\mathbf y_{\mathrm c}$ for $k=2$.
Under the above settings, it can be verified that assumptions used in this paper hold.

The SP realization, FS-NDE and FS-NCE, are applied to both problems with singular-perturbation parameter $\tau=4\times10^{-2}$ to solve \(\mathcal P_{\mathrm d}\) and \(\mathcal P_{\mathrm c}\), respectively. The simulation results are
shown in Fig.~\ref{fig:small-scale}. The background contours represent the
centralized objective value \(F(\mathbf s)=f_1(\mathbf s)+f_2(\mathbf s)\), the
solid black curve represents the equality-constrained manifold, and the two
curves show the state trajectories of the two agents. For both
\(\mathcal P_{\mathrm d}\) and \(\mathcal P_{\mathrm c}\), the agents start
away from the constraint manifold and then move toward consensus while being
driven to the constrained optimizer. Although the objective landscape is
nonconvex and contains several local peaks and valleys, the trajectories
converge to the desired constrained equilibrium in a neighborhood of
\(\mathbf s^{\ast}\). This confirms the local convergence behavior predicted
by the proposed theory in Theorems \ref{thm:euler-sp-dist} and \ref{thm:euler-sp-coupled}.

\subsection{Large-scale implementation}
\label{subsec:large-scale-sim}

We further consider application-oriented examples to show how the proposed
algorithms can be used to solve a practical distributed optimization problem.

\subsubsection{Nonlinear system identification problem solved by FS-NDE}
For problem \(\mathcal P_{\mathrm d}\), motivated by the centralized counterpart \cite{cerone2025framework},  we consider a distributed nonlinear
system identification problem. The output of the nonlinear model is described by
\[
    y(k)
    =
    \theta_1 e^{-y^2(k-1)}
    +\theta_2 u^2(k-1)
    +\theta_3 u(k-2)y(k-1)
    +\theta_4 u(k-2)^{\theta_5},
\]
where \(\bm\theta=\col(\theta_1,\ldots,\theta_5)\) is the unknown parameter vector,
and \(u(k)\) and \(y(k)\) denote the input and the noise-free output, respectively.
Given the measured output \(\tilde y(k)\), the identification objective is to fit
the output \(y(k)\) to \(\tilde y(k)\) while enforcing the above model
equations as nonlinear equality constraints.

To cast this problem into the form of \(\mathcal P_{\mathrm d}\), each agent
maintains a local copy
\(
    \mathbf x_i=\col(\bm\theta_i,y_i(1),\ldots,y_i(M)),
\)
and the data indices are partitioned among the agents. Specifically, the local
cost function is chosen as
\[
    f_i(\mathbf x_i)
    =
    \frac{1}{2}\sum_{k\in\mathcal K_i}
    \big(y_i(k)-\tilde y(k)\big)^2,
\]
where \(\mathcal K_i\) is the set of data samples assigned to agent \(i\). The
local nonlinear equality constraints are given by the model residuals
\[
\begin{aligned}
    h_{i,k}^{\mathrm d}(\mathbf x_i)
    =
    &-y_i(k)
    +\theta_{i,1} e^{-y_i^2(k-1)}
    +\theta_{i,2} u^2(k-1) +\theta_{i,3}u(k-2)y_i(k-1)
    +\theta_{i,4}u(k-2)^{\theta_{i,5}},
    \quad k\in\mathcal K_i .
\end{aligned}
\]
Thus, along with the consensus constraints, this example is a nonlinear equality-constrained distributed
optimization problem of the form \(\mathcal P_{\mathrm d}\). As the consensus constraint guarantees that all
agents identify the same parameter vector and the same noise-free output
sequence, solving this problem
will recover the unknown parameter vector and reconstruct the noise-free output
trajectory that best fits the measured data while satisfying the nonlinear model
equations.

We compare the ideal FL dynamics with the SP realization under different
values of \(\tau\). In the SP realization, the proportional gains are scaled
with \(\tau\), and three values of \(\tau\) are tested. The simulation results
are reported in Fig.~\ref{fig:Pd-sysid}. Two quantities are plotted: the state
error \(\|\mathbf x^k-\mathbf x_{\mathrm d}^{\ast}\|\) and the objective gap
\(|f(\mathbf x^k)-f(\mathbf x_{\mathrm d}^{\ast})|\).

As shown in Fig.~\ref{fig:Pd-sysid}, the ideal FL dynamics drives the state to
the optimal solution and all residuals to zero exponentially, which verifies
Proposition~\ref{prop:fl-dist}, although the proposition is established for the
continuous-time dynamics. Moreover, as \(\tau\) decreases, the trajectories of
the SP realization approach those of the ideal FL dynamics, which is consistent
with the SP analysis in Theorem~\ref{thm:sp-dist}. These results demonstrate
that the proposed SP realization provides an implementable approximation of the
ideal FL dynamics while preserving the desired convergence behavior, verifying
Theorem~\ref{thm:euler-sp-dist}.

\subsubsection{\(SE(3)\)-motivated rigid-registration problem solved by FS-NCE}

For problem \(\mathcal P_{\mathrm c}\), we consider a distributed rigid-registration
problem motivated by pose estimation on \(SE(3)\). Each agent has a subset of
three-dimensional point correspondences and aims to estimate a common rigid-body
transformation
\[
    \mathbf T=
    \begin{bmatrix}
        \mathbf R & \mathbf p\\
        \mathbf 0 & 1
    \end{bmatrix}\in SE(3),
\]
where \(\mathbf R\in SO(3)\) and \(\mathbf p\in\mathbb R^3\). For each local
correspondence \((\mathbf a_{im},\mathbf b_{im})\), the measurement model is
\(\mathbf b_{im}=\mathbf R\mathbf a_{im}+\mathbf p+\boldsymbol\varepsilon_{im}\).

To cast this problem into the form of \(\mathcal P_{\mathrm c}\), each agent
maintains a local copy
\(\mathbf x_i=\col(\operatorname{vec}(\mathbf R_i),\mathbf p_i)\in\mathbb R^{12}\),
where \(\mathbf R_i\in\mathbb R^{3\times 3}\) and
\(\mathbf p_i\in\mathbb R^3\). The local cost function is chosen as
\[
    f_i(\mathbf x_i)
    =
    \frac{1}{2}
    \left\|
        \mathbf R_i\mathbf a_{im}+\mathbf p_i-\mathbf b_{im}
    \right\|^2.
\]
Let \(\mathbf r_{i,1},\mathbf r_{i,2},\mathbf r_{i,3}\) be the three
columns of \(\mathbf R_i\), and define
\[
\begin{aligned}
    \mathbf h_{\mathrm{SO}}(\mathbf R_i)
    =
    \col\big(
    &\mathbf r_{i,1}^{\top}\mathbf r_{i,1}-1,\,
    \mathbf r_{i,2}^{\top}\mathbf r_{i,2}-1,\,
    \mathbf r_{i,3}^{\top}\mathbf r_{i,3}-1,\\
    &\mathbf r_{i,1}^{\top}\mathbf r_{i,2},\,
    \mathbf r_{i,1}^{\top}\mathbf r_{i,3},\,
    \mathbf r_{i,2}^{\top}\mathbf r_{i,3}
    \big).
\end{aligned}
\]
The local nonlinear map is selected as
\(\mathbf h_i^{\mathrm c}(\mathbf x_i)=\omega_i
\mathbf h_{\mathrm{SO}}(\mathbf R_i)\), where
\(\omega_i>0\) and \(\sum_{i=1}^N\omega_i=1\). Thus, along with the consensus constraints, this example is a nonlinear equality-constrained distributed
optimization problem of the form \(\mathcal P_{\mathrm c}\). For such a problem,
under the consensus constraint, one has \(\mathbf R_1=\cdots=\mathbf R_N=\mathbf R\)
and \(\mathbf p_1=\cdots=\mathbf p_N=\mathbf p\). Hence
\(\sum_{i=1}^N\mathbf h_i^{\mathrm c}(\mathbf x_i)
=\mathbf h_{\mathrm{SO}}(\mathbf R)\), and the coupled equality constraint becomes
\(\mathbf R^\top\mathbf R=\mathbf I_3\). If the initial condition is chosen in
the local branch satisfying \(\det(\mathbf R)>0\), this constraint locally
represents \(\xb_i\in SO(3)\times\mathbb R^3\). Therefore, solving this problem estimates the common
rigid-body transformation that best fits the distributed point correspondences
while satisfying the nonlinear \(SE(3)\)-related feasibility constraint.

We compare the ideal FL dynamics with the SP realization under different
values of \(\tau\). In the SP realization, the proportional gains are scaled
with \(\tau\), and three values of \(\tau\) are tested. The simulation results
are reported in Fig.~\ref{fig:Pc-large}. Two quantities are plotted: the state
error \(\|\mathbf x^k-\mathbf x_{\mathrm c}^{\ast}\|\) and the objective gap
\(|f(\mathbf x^k)-f(\mathbf x_{\mathrm c}^{\ast})|\).
As shown in Fig.~\ref{fig:Pc-large}, the ideal FL dynamics converges exponentially to
the optimal solution. Moreover, as \(\tau\) decreases, the SP trajectories
approach the ideal FL trajectory, which is consistent with
Theorem~\ref{thm:sp-coupled} and verifies the discrete-time implementation in
Theorem~\ref{thm:euler-sp-coupled}.

\begin{figure}[t]
    \centering
    \includegraphics[width=0.90\linewidth]{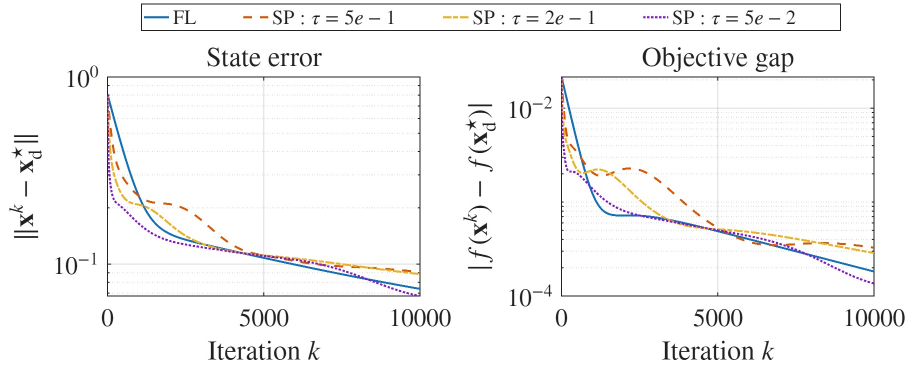}
    \caption{Nonlinear system identification solved by FL dynamics \eqref{eq:ideal-fl-algorithm-dist} and FS-NDE with different $\tau$.}
    \label{fig:Pd-sysid}
\end{figure}

\begin{figure}[t]
    \centering
    \includegraphics[width=0.90\linewidth]{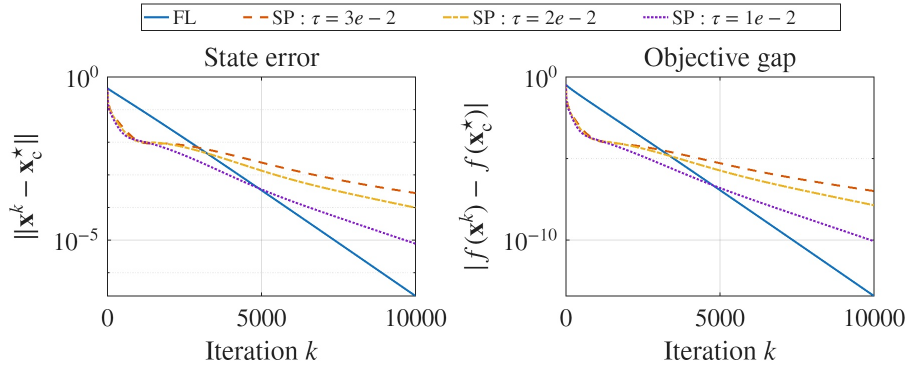}
    \caption{\(SE(3)\)-motivated rigid-registration problem solved by FL dynamics \eqref{eq:ideal-fl-algorithm-coupled} and FS-NCE with different $\tau$.}
    \label{fig:Pc-large}
\end{figure}

\subsubsection{Distributed principal component learning problem solved by FS-NCE}

For problem \(\mathcal P_{\mathrm c}\), we
consider distributed principal component learning, motivated by \cite{chen2021decentralized}. Each agent stores a local
subset of the MNIST handwritten-digit dataset
\cite{lecun1998gradient} and aims to learn a common leading principal
direction. We randomly select \(400\) training images from each digit class,
resulting in \(4000\) samples in total. Each \(28\times28\) image is
downsampled to \(8\times8\), vectorized, and mean centered, yielding a data
vector \(\mathbf a_{im}\in\mathbb R^{64}\). To obtain a label-skewed non-IID
partition, the samples are divided into \(20\) label-homogeneous shards, and
each of the \(N=10\) agents is randomly assigned two shards.

Let \(\mathcal D_i\) denote the sample set stored by agent \(i\), with
\(M_i=|\mathcal D_i|\), and define its local empirical covariance matrix as
\(
    \mathbf C_i
    =
    \frac{1}{M_i}
    \sum_{m\in\mathcal D_i}
    \mathbf a_{im}\mathbf a_{im}^{\top}.
\)
Each agent maintains a local principal-direction estimate
\(\mathbf w_i\in\mathbb R^{64}\), and its local objective function is selected
as
\[
    f_i(\mathbf w_i)
    =
    -\frac{1}{2}
    \mathbf w_i^{\top}\mathbf C_i\mathbf w_i.
\]
The local nonlinear map is defined as
\[
    h_i^{\mathrm c}(\mathbf w_i)
    =
    \omega_i
    \big(
        \mathbf w_i^{\top}\mathbf w_i-1
    \big),
    \qquad
    \omega_i=\frac{1}{N}.
\]
Together with the consensus constraints, this gives a distributed
nonconvex optimization problem of the form \(\mathcal P_{\mathrm c}\).
In particular, the objective is concave quadratic and is therefore nonconvex
when minimized, while the normalization condition is imposed through a
nonlinear coupled equality constraint.

Under the consensus constraint
\(\mathbf w_1=\cdots=\mathbf w_N=\mathbf w\), one has
\(
    \sum_{i=1}^{N}
    h_i^{\mathrm c}(\mathbf w_i)
    =
    \mathbf w^{\top}\mathbf w-1.
\)
Since all agents contain the same number of samples, define the global
empirical covariance matrix as
\(
    \mathbf C
    =
    \frac{1}{N}\sum_{i=1}^{N}\mathbf C_i.
\)
The resulting centralized problem is equivalent, up to a positive scaling of
the objective, to
\(
    \min_{\|\mathbf w\|=1}
    -\frac{1}{2}\mathbf w^{\top}\mathbf C\mathbf w,
\) which is exactly the objective of the centralized counterpart.

To evaluate the learned principal direction, define
\(
    \overline{\mathbf w}^{\,k}
    =
    \frac{1}{N}\sum_{i=1}^{N}\mathbf w_i^k,\) and \(
    \widehat{\mathbf w}^{\,k}
    =
    \frac{\overline{\mathbf w}^{\,k}}
    {\|\overline{\mathbf w}^{\,k}\|}.
\)
The simulation results in Fig.~\ref{fig:Pc-pca} report the sign-invariant
principal-direction error
\(
    e_{\mathrm{dir}}^k
    =
    1-
    \left(
        (\mathbf w^\star)^\top
        \widehat{\mathbf w}^{\,k}
    \right)^2
\)
and the explained-variance gap
\(
    e_{\mathrm{var}}^k
    =
    \lambda_1(\mathbf C)
    -
    (\widehat{\mathbf w}^{\,k})^\top
    \mathbf C
    \widehat{\mathbf w}^{\,k}.
\)
As shown in Fig.~\ref{fig:Pc-pca}, the ideal FL dynamics drives both the
principal-direction error and the explained-variance gap to zero
exponentially. FS-NCE exhibits the same convergence behavior, and its
trajectories approach those of the ideal FL dynamics more closely as
\(\tau\) decreases. These results are consistent with
Theorem~\ref{thm:sp-coupled} and verify the discrete-time implementation in
Theorem~\ref{thm:euler-sp-coupled}. They also demonstrate the applicability of
the proposed method to distributed machine-learning problems involving a
nonconvex objective and nonlinear normalization constraints.

\begin{figure}[t]
    \centering
    \includegraphics[width=0.90\linewidth]
    {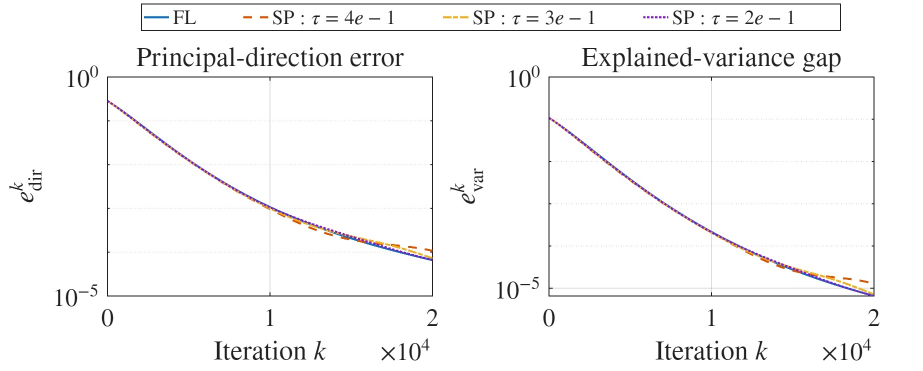}
    \caption{Distributed principal component learning on MNIST solved by
    FL dynamics \eqref{eq:ideal-fl-algorithm-coupled} and FS-NCE with
    different values of \(\tau\).}
    \label{fig:Pc-pca}
\end{figure}

\subsection{Comparison with Existing Methods and Different Choices of Output  Dynamics}
\label{sec.sim_compare}

\begin{figure}[t]
    \centering
    \subfloat[Comparison results for problem \(\mathcal P_{\mathrm d}\).]
    {
        \includegraphics[width=0.48\linewidth]{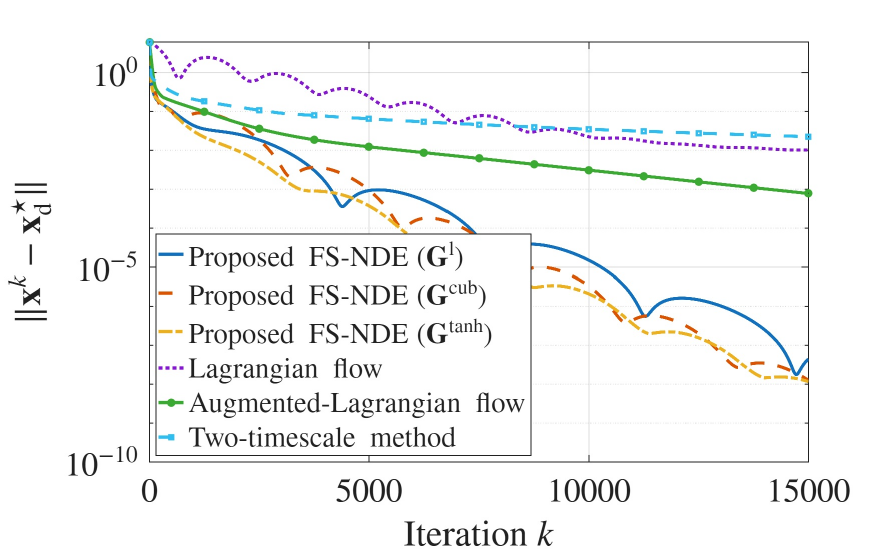}
        \label{fig:compare-pd}
    }
    \hfill
    \subfloat[Comparison results for problem \(\mathcal P_{\mathrm c}\).]
    {
        \includegraphics[width=0.48\linewidth]{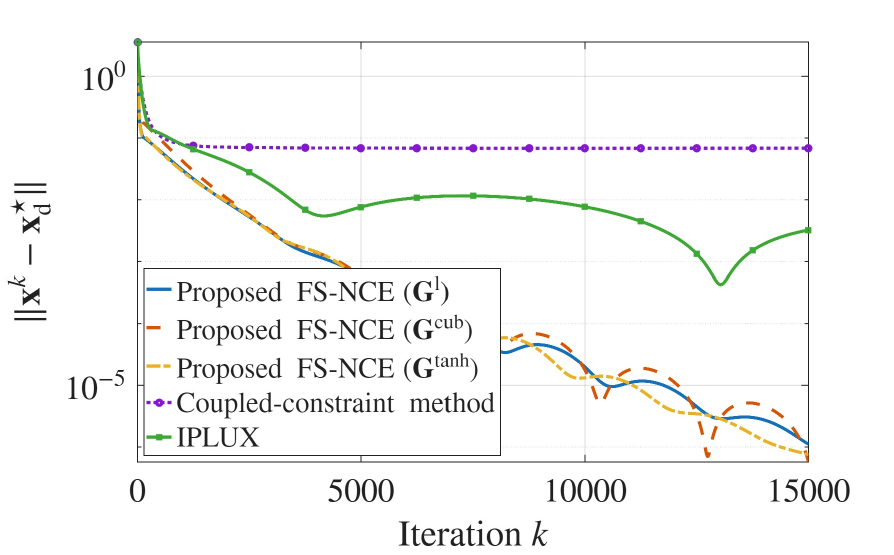}
        \label{fig:compare-pc}
    }
    \caption{Comparison of different dynamics $\Gb$ with existing distributed optimization algorithms.}
    \label{fig:compare-literature}
\end{figure}

In this subsection, we compare the proposed FS-NDE and FS-NCE with existing methods, and further investigate the influence of different choices of the nonlinear dynamics \(\mathbf G\). Specifically, three choices of \(\mathbf G\) are considered:
\[
\begin{aligned}
    \mathbf G^{\mathrm l}(\mathbf y) &= -k\mathbf y,\\
  [\mathbf G_{\mathrm c}^{\mathrm{cub}}(\mathbf y_{\mathrm c})]_{j}
    &=
    -k[\mathbf y_{\mathrm c}]_{j},\quad
     &&[\mathbf G_{\mathrm h}^{\mathrm{cub}}(\mathbf y_{\mathrm h})]_{j}
   =
    -k[\mathbf y_{\mathrm h}]_{j}
    -
    k[\mathbf y_{\mathrm h}]_{j}^{3},\\
     [\mathbf G_{\mathrm c}^{\tanh}(\mathbf y_{\mathrm c})]_{j}
    &=
    -k[\mathbf y_{\mathrm c}]_{j},\quad
    &&[\mathbf G_{\mathrm h}^{\tanh}(\mathbf y_{\mathrm h})]_{j}
    =
    -k[\mathbf y_{\mathrm h}]_{j}
    -
    k\delta
    \tanh\!\left(
        \frac{[\mathbf y_{\mathrm h}]_{j}}{\delta}
    \right).
\end{aligned}
\]
where \(k,\delta>0\). We consider a larger network with \(N=10\) agents and local dimension
\(d=5\). The communication graph is selected as a connected undirected ring
graph.

For problem \(\mathcal P_{\mathrm d}\), we compare the proposed FS-NDE with the existing Lagrangian flow and augmented-Lagrangian flow in \cite{matei2013distributed}, and the two-timescale method in \cite{shi2025distributed}. Since the comparison methods are developed for convex optimization problems, we set the objective function to be strongly convex quadratic functions, rather than using the nonconvex objectives allowed in this paper, while retaining nonlinear equality constraints. The comparison results are shown in Fig.~\ref{fig:compare-literature}(a). It can be observed that the proposed FS-NDE achieves a significantly faster convergence rate for all three choices of \(\mathbf G\), even compared with the augmented-Lagrangian flow in \cite{matei2013distributed}, which can be regarded as a special case of FS-NDE under the proportional choice \(\mathbf G^{\mathrm l}\), as shown in Section~\ref{sec:pd-connection-dist}. As discussed in Section~\ref{sec:pd-connection-dist}, this improvement is mainly due to the high-gain dual dynamics induced by the FS-NDE realization. The high-gain mechanism drives the constraint residuals rapidly toward the equality manifold, thereby accelerating the overall convergence.

For problem \(\mathcal P_{\mathrm c}\), we further compare the proposed FS-NCE with the existing methods, IPLUX in \cite{wu2022distributed} and coupled-constrained method in \cite{liang2020distributed}. The same three choices of output dynamics, \(\mathbf G^{\mathrm l}\), \(\mathbf G^{\mathrm{cub}}\), and \(\mathbf G^{\tanh}\), are adopted. Since the compared literature mainly considers convex optimization problems with linear equality constraints, we also use a strongly convex objective function and linear equality constraints in this comparison. As shown in Fig.~\ref{fig:compare-literature}(b), the proposed FS-NCE achieves exponential convergence and exhibits faster decay than the existing baseline methods with subexponential convergence rates. 

Furthermore, the results also show the influence of different nonlinear choices of \(\mathbf G\). In particular, \(\mathbf G^{\tanh}\) and \(\mathbf G^{\mathrm{cub}}\) lead to faster convergence than the linear choice \(\mathbf G^{\mathrm l}\). This acceleration effect is particularly clear in the FS-NDE case, while it is less pronounced in the FS-NCE case. More generally, these observations indicate that the proposed FS-NDE/FS-NCE framework has the potential to realize different transient and convergence behaviors through the design of \(\mathbf G\), which will be investigated in future work.

\section{Conclusions}
\label{sec:conclusion}
This paper developed a feedback-linearization and singular-perturbation
framework for distributed optimization with equality constraints. Two classes
of problems were considered, i.e., distributed local equality constraints and
coupled equality constraints. Under a
local quadratic-growth condition on the feasible manifold, local exponential
convergence was established for the ideal dynamics, the distributed
singular-perturbation realizations, and their explicit Euler discretizations.

Future work will investigate extensions of the proposed framework to more
general constraint structures, including mixed equality-inequality constraints
and convex-set constraints. Another direction is to study how to choose different output dynamics $\Gb$ for various transient and convergence behaviors.
It is also of interest to further explore manifold-control tools from nonlinear
control theory for distributed optimization over more complicated constraint
geometries.

\appendix
\section{Proof of Proposition~\ref{prop:fl-dist}}
\label{app:fl-dist}

We first construct local coordinates around
\(\mathbf x_{\mathrm d}^{\ast}
=\mathbf 1_N\otimes\mathbf s_{\mathrm d}^{\ast}\). Since
\(\mathbf L\) is a Laplacian matrix, it is rank deficient. Thus, for the
coordinate construction only, choose a constant matrix
\(\mathbf C\in\mathbb R^{(N-1)d\times Nd}\) such that
\[
    \ker\mathbf C=\ker\mathbf L
    =
    \{\mathbf 1_N\otimes\zeta:\zeta\in\mathbb R^d\},
    \qquad
    \operatorname{rank}\mathbf C=(N-1)d .
\]
For instance, \(\mathbf C\) can be chosen as any full-row-rank selection of
independent rows of \(\mathbf L\). Since \(\mathbf C\) and \(\mathbf L\) have
the same kernel, there exists \(c_L>0\) such that
\begin{equation}
\label{eq:L-C-bound}
    \|\mathbf L\mathbf x\|
    \le
    c_L\|\mathbf C\mathbf x\|,
    \qquad
    \forall \mathbf x\in\mathbb R^{Nd}.
\end{equation}
Define
\[
    \mathbf y_{\mathrm r}(\mathbf x)
    :=
    \operatorname{col}
    \big(
        \mathbf C\mathbf x,
        \mathbf h^{\mathrm d}(\mathbf x)
    \big),
    \qquad
    \mathbf A_{\mathrm r}(\mathbf x)
    :=
    \frac{\partial\mathbf y_{\mathrm r}(\mathbf x)}{\partial\mathbf x}
    =
    \operatorname{col}
    \big(
        \mathbf C,
        \tilde{\mathbf J}^{\mathrm d}(\mathbf x)
    \big).
\]
By Assumption~\ref{asm:reg-dist},
\(\operatorname{col}(\mathbf J_1^{\mathrm d}(\mathbf x_1),\ldots,
\mathbf J_N^{\mathrm d}(\mathbf x_N))\) has full row rank \(r\) locally.
Hence, following the same proof of Lemma \ref{lem:fl-wp-dist}, \(\mathbf A_{\mathrm r}(\mathbf x)\) has full row rank
\((N-1)d+r\) in a sufficiently small neighborhood of
\(\mathbf x_{\mathrm d}^{\ast}\).  Moreover,
\[
    \{\mathbf y_{\mathrm r}=\mathbf 0\}
    =
    \{\mathbf L\mathbf x=\mathbf 0,\ 
      \mathbf h^{\mathrm d}(\mathbf x)=\mathbf 0\}
    =
    \mathcal X_{\mathrm d},
\]
and, on \(\mathcal X_{\mathrm d}\),
\begin{equation}
\label{eq:kerAr-tangent-dist}
    \ker\mathbf A_{\mathrm r}(\mathbf x)
    =
    T_{\mathbf x}\mathcal X_{\mathrm d}.
\end{equation}

Since \(\mathbf y_{\mathrm r}\) is a submersion, the local submersion theorem
gives a complementary coordinate
\(\boldsymbol\xi(\mathbf x)\in\mathbb R^{d-r}\) such that
\(\hat{\bm\Phi}_{\mathrm d}(\mathbf x)
=(\mathbf y_{\mathrm r}(\mathbf x),\boldsymbol\xi(\mathbf x))\) is a local
diffeomorphism around \(\mathbf x_{\mathrm d}^{\ast}\). Define
\(\hat{\mathbf p}_{\mathrm d}(\boldsymbol\xi)
:=\hat{\bm\Phi}_{\mathrm d}^{-1}(\mathbf 0,\boldsymbol\xi)\) and
\(\hat{\mathbf P}_{\mathrm d}(\boldsymbol\xi)
:=\partial\hat{\mathbf p}_{\mathrm d}(\boldsymbol\xi)/
\partial\boldsymbol\xi\). Since
\(\mathbf y_{\mathrm r}(\hat{\mathbf p}_{\mathrm d}(\boldsymbol\xi))=\mathbf 0\),
differentiating with respect to \(\boldsymbol\xi\) gives
\[
    \mathbf A_{\mathrm r}
    \big(
        \hat{\mathbf p}_{\mathrm d}(\boldsymbol\xi)
    \big)
    \hat{\mathbf P}_{\mathrm d}(\boldsymbol\xi)
    =
    \mathbf 0 .
\]
Thus, by \eqref{eq:kerAr-tangent-dist}, we have
\[
    \operatorname{Im}
    \hat{\mathbf P}_{\mathrm d}(\boldsymbol\xi)
    =
    \ker
    \mathbf A_{\mathrm r}
    \big(
        \hat{\mathbf p}_{\mathrm d}(\boldsymbol\xi)
    \big)
    =
    T_{\hat{\mathbf p}_{\mathrm d}(\boldsymbol\xi)}
    \mathcal X_{\mathrm d}.
\]
Let
\(\hat{\mathbf H}_{\star}
:=\hat{\mathbf P}_{\mathrm d}(\boldsymbol\xi_{\mathrm d}^{\ast})^\top
\hat{\mathbf P}_{\mathrm d}(\boldsymbol\xi_{\mathrm d}^{\ast})\). With the full-row-rank property of \(\mathbf A_{\mathrm r}\) and, we have
\(\hat{\mathbf H}_{\star}\succ0\). Set
\(\mathbf T_{\star}:=\hat{\mathbf H}_{\star}^{1/2}\) and
\(\bm\eta:=\mathbf T_{\star}
(\boldsymbol\xi-\boldsymbol\xi_{\mathrm d}^{\ast})\) where $\boldsymbol\xi_{\mathrm d}^{\ast}
:=\boldsymbol\xi(\mathbf x_{\mathrm d}^{\ast})$. Define
\[
    \mathbf p_{\mathrm d}(\bm\eta)
    :=
    \hat{\mathbf p}_{\mathrm d}
    \big(
        \boldsymbol\xi_{\mathrm d}^{\ast}
        +
        \mathbf T_{\star}^{-1}\bm\eta
    \big),
    \qquad
    \mathbf P_{\mathrm d}(\bm\eta)
    :=
    \frac{\partial\mathbf p_{\mathrm d}(\bm\eta)}
    {\partial\bm\eta}.
\]
Then, at \(\bm\eta_{\mathrm d}^{\ast}:=\mathbf 0\),
\begin{equation}
\label{eq:P-normalization-dist}
    \mathbf P_{\mathrm d}(\bm\eta_{\mathrm d}^{\ast})^\top
    \mathbf P_{\mathrm d}(\bm\eta_{\mathrm d}^{\ast})
    =
    \mathbf I,
\end{equation}
and locally
\begin{equation}
\label{eq:Project}
\ba
   & \operatorname{Im}\mathbf P_{\mathrm d}(\bm\eta)
    =
    T_{\mathbf p_{\mathrm d}(\bm\eta)}\mathcal X_{\mathrm d},\\
    &\Pi_{T_{\mathbf x}\mathcal X_{\mathrm d}}
    =
    \mathbf P_{\mathrm d}
    \big(\mathbf P_{\mathrm d}^{\top}\mathbf P_{\mathrm d}\big)^{-1}
    \mathbf P_{\mathrm d}^{\top}.
    \ea
\end{equation}
Let \(\bm\Phi_{\mathrm d}(\mathbf x)
:=(\mathbf y_{\mathrm r}(\mathbf x),\bm\eta(\mathbf x))\) denote the resulting
local coordinate map.

We next characterize the zero-output dynamics. On \(\mathcal X_{\mathrm d}\),
one has \(\mathbf y_{\mathrm d}=\mathbf 0\). Since
\(\mathbf G_{\mathrm d}(\mathbf 0)=\mathbf 0\), the ideal FL law gives
\(\dot{\mathbf y}_{\mathrm d}=\mathbf 0\). Thus \(\mathcal X_{\mathrm d}\) is
locally invariant. On this manifold, the ideal input cancels the normal
component of \(-\nabla f(\mathbf x)\), and the motion is
\begin{equation}
\label{eq:zero-x-dynamics-dist}
    \dot{\mathbf x}
    =
    -
    \Pi_{T_{\mathbf x}\mathcal X_{\mathrm d}}
    \nabla f(\mathbf x).
\end{equation}
Writing \(\mathbf x=\mathbf p_{\mathrm d}(\bm\eta)\),
one has \(\dot{\mathbf x}=\mathbf P_{\mathrm d}(\bm\eta)\dot{\bm\eta}\).
Substituting \eqref{eq:Project} into \eqref{eq:zero-x-dynamics-dist} and using the
chain rule
\(\nabla_{\bm\eta}\widetilde F_{\mathrm d}(\bm\eta)
=\mathbf P_{\mathrm d}(\bm\eta)^\top
\nabla f(\mathbf p_{\mathrm d}(\bm\eta))\), where \(\widetilde F_{\mathrm d}(\bm\eta)
:=f(\mathbf p_{\mathrm d}(\bm\eta))\), we obtain
\[
    \dot{\bm\eta}
    =
    -
    \mathbf H_{\mathrm d}(\bm\eta)^{-1}
    \nabla_{\bm\eta}
    \widetilde F_{\mathrm d}(\bm\eta),
\]
where
\(\mathbf H_{\mathrm d}(\bm\eta)
:=
\mathbf P_{\mathrm d}(\bm\eta)^\top
\mathbf P_{\mathrm d}(\bm\eta)\).

Since
\(\mathbf p_{\mathrm d}(\bm\eta)\in\mathcal X_{\mathrm d}\), there is a local
parametrization \(\mathbf s_{\mathrm d}(\bm\eta)\in\Omega_{\mathrm d}\) such
that
\(
    \mathbf p_{\mathrm d}(\bm\eta)
    =
    \mathbf 1_N\otimes\mathbf s_{\mathrm d}(\bm\eta).
\)
Let
\(\mathbf J_s(\bm\eta)
:=\partial\mathbf s_{\mathrm d}(\bm\eta)/\partial\bm\eta\). Then
\(\mathbf P_{\mathrm d}(\bm\eta)=\mathbf 1_N\otimes\mathbf J_s(\bm\eta)\).
By \eqref{eq:P-normalization-dist},
\(\mathbf J_s(\bm\eta_{\mathrm d}^{\ast})^\top
\mathbf J_s(\bm\eta_{\mathrm d}^{\ast})=\frac{1}{N}\mathbf I\).

Define
\(\widetilde F_{\mathrm d}(\bm\eta)
:=f(\mathbf p_{\mathrm d}(\bm\eta))
=F(\mathbf s_{\mathrm d}(\bm\eta))\).
For any fixed direction \(\zeta\) and sufficiently small \(\tau\), the restricted
quadratic growth condition in Assumption~\ref{asm:growth} gives
\[
    \widetilde F_{\mathrm d}(\bm\eta_{\mathrm d}^{\ast}+\tau\zeta)
    -
    \widetilde F_{\mathrm d}(\bm\eta_{\mathrm d}^{\ast})
    \ge
    \frac{\rho_{\mathrm d}}{2}
    \|\mathbf s_{\mathrm d}(\bm\eta_{\mathrm d}^{\ast}+\tau\zeta)
    -\mathbf s_{\mathrm d}(\bm\eta_{\mathrm d}^{\ast})\|^2 .
\]
On the other hand, since \(\bm\eta_{\mathrm d}^{\ast}\) is a local minimizer of
\(\widetilde F_{\mathrm d}\), one has
\(\nabla_{\bm\eta}\widetilde F_{\mathrm d}(\bm\eta_{\mathrm d}^{\ast})=\mathbf 0\), and Taylor's formula yields
\[
    \widetilde F_{\mathrm d}(\bm\eta_{\mathrm d}^{\ast}+\tau\zeta)
    -
    \widetilde F_{\mathrm d}(\bm\eta_{\mathrm d}^{\ast})
    =
    \frac{\tau^2}{2}
    \zeta^\top
    \nabla_{\bm\eta}^2\widetilde F_{\mathrm d}(\bm\eta_{\mathrm d}^{\ast})
    \zeta
    +
    o(\tau^2).
\]
Moreover,
\[
    \mathbf s_{\mathrm d}(\bm\eta_{\mathrm d}^{\ast}+\tau\zeta)
    -
    \mathbf s_{\mathrm d}(\bm\eta_{\mathrm d}^{\ast})
    =
    \tau\mathbf J_s(\bm\eta_{\mathrm d}^{\ast})\zeta
    +
    o(\tau),
\]
and hence
\[
    \|\mathbf s_{\mathrm d}(\bm\eta_{\mathrm d}^{\ast}+\tau\zeta)
    -\mathbf s_{\mathrm d}(\bm\eta_{\mathrm d}^{\ast})\|^2
    =
    \tau^2
    \zeta^\top
    \mathbf J_s(\bm\eta_{\mathrm d}^{\ast})^\top
    \mathbf J_s(\bm\eta_{\mathrm d}^{\ast})
    \zeta
    +
    o(\tau^2).
\]
Comparing the preceding estimates and using
\(\mathbf J_s(\bm\eta_{\mathrm d}^{\ast})^\top
\mathbf J_s(\bm\eta_{\mathrm d}^{\ast})=\frac{1}{N}\mathbf I\), we obtain
\[
    \nabla_{\bm\eta}^2
    \widetilde F_{\mathrm d}(\bm\eta_{\mathrm d}^{\ast})
    \succeq
    \frac{\rho_{\mathrm d}}{N}\mathbf I .
\]

A Taylor
expansion of the gradient gives
\[
    \nabla_{\bm\eta}\widetilde F_{\mathrm d}(\bm\eta)
    =
    \nabla_{\bm\eta}^2
    \widetilde F_{\mathrm d}(\bm\eta_{\mathrm d}^{\ast})
    {\bm\eta}
    +
    o(\|{\bm\eta}\|).
\]
Also, by the continuity of
\(\mathbf H_{\mathrm d}(\bm\eta)^{-1}\) and
\(\mathbf H_{\mathrm d}(\bm\eta_{\mathrm d}^{\ast})=\mathbf I\), one has
\(\mathbf H_{\mathrm d}(\bm\eta)^{-1}=\mathbf I+o(1)\) as
\(\bm\eta\to\bm\eta_{\mathrm d}^{\ast}\). Therefore,
\[
    {\bm\eta}^{\top}
    \mathbf H_{\mathrm d}(\bm\eta)^{-1}
    \nabla_{\bm\eta}\widetilde F_{\mathrm d}(\bm\eta)
    \ge
    \frac{\rho_{\mathrm d}}{N}
    \|{\bm\eta}\|^2
    +
    o(\|{\bm\eta}\|^2).
\]
Consequently, for any \(\rho_\eta\in(0,\rho_{\mathrm d}/N)\), we have
\begin{equation}
\label{eq:eta-dissipation-dist}
    {\bm\eta}^{\top}
    \mathbf H_{\mathrm d}(\bm\eta)^{-1}
    \nabla_{\bm\eta}\widetilde F_{\mathrm d}(\bm\eta)
    \ge
    \rho_\eta\|{\bm\eta}\|^2 .
\end{equation}

It remains to combine the output dynamics and the internal dynamics. Since
\(\mathbf C\) and \(\mathbf L\) have the same kernel and \(\mathbf C\) has full
row rank, there exist constant matrices \(\mathbf T_C\) and \(\mathbf T_L\) such
that \(\mathbf C=\mathbf T_C\mathbf L\) and
\(\mathbf L=\mathbf T_L\mathbf C\). Define
\(\mathbf S_r:=\operatorname{blkdiag}(\mathbf T_C,\mathbf I_r)\) and
\(\mathbf T_r:=\operatorname{blkdiag}(\mathbf T_L,\mathbf I_r)\). Then
\(\mathbf y_{\mathrm r}=\mathbf S_r\mathbf y_{\mathrm d}\) and
\(\mathbf y_{\mathrm d}=\mathbf T_r\mathbf y_{\mathrm r}\). Hence the output
subsystem in the reduced output coordinate is
\[
    \dot{\mathbf y}_{\mathrm r}
    =
    \mathbf G_{\mathrm r}(\mathbf y_{\mathrm r})
    :=
    \mathbf S_r\mathbf G_{\mathrm d}(\mathbf T_r\mathbf y_{\mathrm r}).
\]
By the exponential stability of the assigned output dynamics, the origin of this output subsystem is
globally exponentially stable. Thus, 
there exist a Lyapunov function \(V_2\) and constants \(c_1,c_2>0\) such that
\[
    c_1\|\mathbf y_{\mathrm r}\|^2
    \le
    V_2(\mathbf y_{\mathrm r})
    \le
    c_2\|\mathbf y_{\mathrm r}\|^2,
    \qquad
    \dot V_2
    \le
    -2\alpha_G V_2 .
\]

In the coordinates \((\mathbf y_{\mathrm r},\bm\eta)\), the
\(\bm\eta\)-dynamics has the form
\[
    \dot{\bm\eta}
    =
    -
    \mathbf H_{\mathrm d}(\bm\eta)^{-1}
    \nabla_{\bm\eta}\widetilde F_{\mathrm d}(\bm\eta)
    +
    \boldsymbol\delta(\bm\eta,\mathbf y_{\mathrm r}),
\]
where \(\boldsymbol\delta(\bm\eta,\mathbf 0)=\mathbf 0\). By local
Lipschitzness, there exists \(L_1>0\) such that locally
\(\|\boldsymbol\delta(\bm\eta,\mathbf y_{\mathrm r})\|
\le L_1\|\mathbf y_{\mathrm r}\|\). Let
\(V_1:=\frac12\|{\bm\eta}\|^2\). From
\eqref{eq:eta-dissipation-dist} and Young's inequality, for any
\(\varepsilon\in(0,\rho_\eta)\) to be determined later,
\[
    \dot V_1
    \le
    -(\rho_\eta-\varepsilon)\|{\bm\eta}\|^2
    +
    \frac{L_1^2}{4\varepsilon}\|\mathbf y_{\mathrm r}\|^2
    \le
    -2(\rho_\eta-\varepsilon)V_1
    +
    c_q V_2,
\]
where \(c_q:=L_1^2/(4\varepsilon c_1)\).

Fix any \(\alpha_{\mathrm{FL}}\in(0,\min\{\rho_\eta,\alpha_G\})\). Choose
\(\varepsilon\in(0,\rho_\eta-\alpha_{\mathrm{FL}})\), and choose
\(\mu>0\) such that \(\mu c_q\le 2(\alpha_G-\alpha_{\mathrm{FL}})\). Define the
composite Lyapunov function \(W:=\mu V_1+V_2\). Then
\[
    \dot W
    \le
    -2\mu(\rho_\eta-\varepsilon)V_1
    -
    (2\alpha_G-\mu c_q)V_2
    \le
    -2\alpha_{\mathrm{FL}}W .
\]
Moreover, with \(m_W:=\min\{\mu/2,c_1\}\) and
\(M_W:=\max\{\mu/2,c_2\}\), one has
\[
    m_W
    \left\|
        \operatorname{col}({\bm\eta},\mathbf y_{\mathrm r})
    \right\|^2
    \le
    W
    \le
    M_W
    \left\|
        \operatorname{col}({\bm\eta},\mathbf y_{\mathrm r})
    \right\|^2 .
\]
Therefore,
\[
    \left\|
        \operatorname{col}
        ({\bm\eta}(t),\mathbf y_{\mathrm r}(t))
    \right\|
    \le
    c_0e^{-\alpha_{\mathrm{FL}}t}
    \left\|
        \operatorname{col}
        ({\bm\eta}(0),\mathbf y_{\mathrm r}(0))
    \right\|,
\]
where \(c_0:=\sqrt{M_W/m_W}\).

Since \(\bm\Phi_{\mathrm d}\) is a local diffeomorphism, there exist constants
\(L_\Psi,L_\Phi>0\) such that locally
\begin{equation}
    \label{eq:diffeo-bound-dist}
    \|\mathbf x-\mathbf x_{\mathrm d}^{\ast}\|
    \le
    L_\Psi
    \left\|
        \operatorname{col}({\bm\eta},\mathbf y_{\mathrm r})
    \right\|,
    \qquad
    \left\|
        \operatorname{col}({\bm\eta},\mathbf y_{\mathrm r})
    \right\|
    \le
    L_\Phi
    \|\mathbf x-\mathbf x_{\mathrm d}^{\ast}\|.
\end{equation}
Moreover, by \eqref{eq:L-C-bound}, one can take
\(c_y:=\sqrt{c_L^2+1}\) such that locally
\(
    \|\mathbf L\mathbf x\|
    +
    \|\mathbf h^{\mathrm d}(\mathbf x)\|
    \le
    c_y\|\mathbf y_{\mathrm r}(\mathbf x)\| .
\)
Combining the above estimates gives
\[
\begin{aligned}
    &\|\mathbf x(t)-\mathbf x_{\mathrm d}^{\ast}\|
    +
    \|\mathbf L\mathbf x(t)\|
    +
    \|\mathbf h^{\mathrm d}(\mathbf x(t))\|\le
    C_{\mathrm{FL}}e^{-\alpha_{\mathrm{FL}}t}
    \|\mathbf x(0)-\mathbf x_{\mathrm d}^{\ast}\|,
\end{aligned}
\]
where \(
    C_{\mathrm{FL}}
    :=(L_\Psi+c_y)c_0L_\Phi
.
\)

Since \(\rho_\eta\in(0,\rho_{\mathrm d}/N)\) is arbitrary, the above argument
allows any
\(\alpha_{\mathrm{FL}}<\min\{\rho_{\mathrm d}/N,\alpha_G\}\) by taking a
sufficiently small neighborhood. Finally, choose \(\delta_{\mathrm{FL}}>0\)
small enough so that the ball
\(\|\mathbf x-\mathbf x_{\mathrm d}^{\ast}\|<\delta_{\mathrm{FL}}\) is mapped
by \(\bm\Phi_{\mathrm d}\) into the local coordinate region where all the above
bounds hold. Since
\(\dot W\le -2\alpha_{\mathrm{FL}}W\), the corresponding sublevel set of \(W\)
is forward invariant, and thus every trajectory starting from
\(\|\mathbf x(0)-\mathbf x_{\mathrm d}^{\ast}\|<\delta_{\mathrm{FL}}\) remains
in the region where the estimates are valid. This completes the proof.

\section{Proof of Theorem~\ref{thm:sp-dist}}
\label{app:sp-dist}

We first prove that \eqref{eq:sp-compact-dist} admits a locally unique
equilibrium. Since \eqref{eq:sp-compact-dist} is equivalent to
\eqref{eq:fast-algorithm-dist} via
\(\mathbf u_{\mathrm d}=\mathbf z+\mathbf K_p\mathbf y_{\mathrm d}\), it suffices
to show that \eqref{eq:fast-algorithm-dist} has a locally unique equilibrium in the
\((\mathbf x,\mathbf u_{\mathrm d})\)-coordinates. At an equilibrium,
\(\dot{\mathbf u}_{\mathrm d}=\mathbf 0\) gives
\(\mathbf M_{\mathrm d}(\mathbf x)\mathbf u_{\mathrm d}
+\mathbf A_{\mathrm d}(\mathbf x)\nabla f(\mathbf x)
+\mathbf G_{\mathrm d}(\mathbf y_{\mathrm d})=\mathbf 0\), so by
Lemma~\ref{lem:fl-wp-dist} the input is uniquely determined by
\(\mathbf x\), i.e. \(\mathbf u_{\mathrm d}=\mathbf u_{\mathrm d}^{\rm q}
(\mathbf x)\). Substituting \(\mathbf u_{\mathrm d}=\mathbf u_{\mathrm d}^{\rm q}(\mathbf x)\)
into \(\dot{\mathbf x}=\mathbf 0\) gives the ideal FL dynamics
\eqref{eq:ideal-fl-algorithm-dist} at rest. By Proposition~\ref{prop:fl-dist},
\(\mathbf x_{\mathrm d}^{\ast}\) is a locally exponentially stable equilibrium
of \eqref{eq:ideal-fl-algorithm-dist}, hence a locally isolated one. Therefore
\(\mathbf x=\mathbf x_{\mathrm d}^{\ast}\) is the locally unique equilibrium, and
\(\mathbf u_{\mathrm d}=\mathbf u_{\mathrm d}^{\rm q}(\mathbf x_{\mathrm d}^{\ast})
=\mathbf u_{\mathrm d}^{\ast}\). Thus
\((\mathbf x_{\mathrm d}^{\ast},\mathbf u_{\mathrm d}^{\ast})\) is the locally
unique equilibrium of \eqref{eq:fast-algorithm-dist}, and consequently
\eqref{eq:sp-compact-dist} has the locally unique equilibrium
\((\mathbf x_{\mathrm d}^{\ast},\mathbf z^\ast)\) with
\(\mathbf z^\ast=\mathbf u_{\mathrm d}^{\ast}
-\mathbf K_p\mathbf y_{\mathrm d}(\mathbf x_{\mathrm d}^{\ast})
=\mathbf u_{\mathrm d}^{\ast}\).

Let 
\(\hat{\mathbf u}:=\mathbf u_{\mathrm d}
-\mathbf u_{\mathrm d}^{\rm q}(\mathbf x)\), with
\(\hat{\mathbf u}=\operatorname{col}(\hat{\mathbf v},
\hat{\bm\mu})\). With \(\mathbf x\) frozen and with the fast time \(s:=t/\tau\), the
fast equation in \eqref{eq:fast-algorithm-dist} and the definition of
\(\mathbf u_{\mathrm d}^{\rm q}\) give
\[
    \frac{d\hat{\mathbf u}}{ds}
    =
    -
    \mathbf K_0\mathbf M_{\mathrm d}(\mathbf x)\hat{\mathbf u}.
\]
Define 
\(
    \hat{\mathbf w}
    :=\mathbf B_{\mathrm d}(\mathbf x)\hat{\mathbf u}=
    \hat{\mathbf v}
    +
    \tilde{\mathbf J}^{\mathrm d}(\mathbf x)^{\top}\hat{\bm\mu}
 \in\mathcal N_{\mathrm d}(\mathbf x),
\)
where $ \mathcal N_{\mathrm d}(\mathbf x)
    :=
    \operatorname{Im}\mathbf L+
    \operatorname{Im}\tilde{\mathbf J}^{\mathrm d}(\mathbf x)^{\top}.$ Using the block structure of \(\mathbf M_{\mathrm d}\), we have
\[
    \frac{d\hat{\mathbf w}}{ds}
    =
    -
    \mathbf N_{\mathrm d}(\mathbf x)\hat{\mathbf w},
    \qquad
    \mathbf N_{\mathrm d}(\mathbf x)
    :=
    K_{0c}\mathbf L+
    K_{0h}
    \tilde{\mathbf J}^{\mathrm d}(\mathbf x)^\top
    \tilde{\mathbf J}^{\mathrm d}(\mathbf x).
\]

The matrix \(\mathbf N_{\mathrm d}(\mathbf x)\) is symmetric positive
semidefinite by Assumption \ref{asm:graph}, with
\(\ker\mathbf N_{\mathrm d}(\mathbf x)=\ker\mathbf L\cap
\ker\tilde{\mathbf J}^{\mathrm d}(\mathbf x)\). Hence it is uniformly positive
definite on \(\mathcal N_{\mathrm d}(\mathbf x)\). Thus, for some \(\beta>0\),
\begin{equation}
\label{eq:N-positive-dist}
    \mathbf w^\top
    \mathbf N_{\mathrm d}(\mathbf x)
    \mathbf w
    \ge
    \beta\|\mathbf w\|^2,
    \qquad
    \forall\,\mathbf w\in\mathcal N_{\mathrm d}(\mathbf x).
\end{equation}
Consequently, for \(V_b:=\frac12\|\hat{\mathbf w}\|^2\),
\[
    \frac{dV_b}{ds}
    =
    -
    \hat{\mathbf w}^{\top}
    \mathbf N_{\mathrm d}(\mathbf x)
    \hat{\mathbf w}
    \le
    -2\beta V_b .
\]
This proves the local uniform linear stability of the boundary-layer
system.

The reduced system is obtained by setting
\(\hat{\mathbf w}=\mathbf 0\). According to the proof of Lemma \ref{lem:fl-wp-dist}, there exist constants
\(\underline b,\overline b>0\) such that, for all
\(\ub_{\rm d}\in\mathcal A\),
\begin{equation}
\label{eq:action-equiv-dist}
    \underline b
    \left\|
        \ub_{\rm d}
    \right\|
    \le
    \left\|
        \mathbf B_{\mathrm d}(\mathbf x){\mathbf u}_{\rm d}
    \right\|
    \le
    \overline b
    \left\|
        \ub_{\rm d}
    \right\|.
\end{equation} 
Then \(\hat{\mathbf w}=\mathbf 0\) is equivalent to 
\(\hat{\mathbf u}=\mathbf 0\). Hence
\(\mathbf u_{\mathrm d}=\mathbf u_{\mathrm d}^{\rm q}(\mathbf x)\), which
satisfies \eqref{eq:ideal-fl-relation-dist}. Therefore, the reduced dynamics is
exactly the ideal FL dynamics \eqref{eq:ideal-fl-algorithm-dist}. By Proposition~\ref{prop:fl-dist} and the proof in
Appendix~\ref{app:fl-dist}, for any admissible
\(\alpha_{\mathrm{FL}}\), there is a local Lyapunov function
\(W(\mathbf x)\) such that, for some \(c_1,c_2,c_3>0\),
\begin{equation}
\label{eq:reduced-lyapunov-dist}
\ba
    &c_1 \varepsilon_{\mathrm d}(\mathbf x)^2
    \le
    W(\mathbf x)
    \le
    c_2 \varepsilon_{\mathrm d}(\mathbf x)^2,\\
    &\nabla W^\top
    \dot{\mathbf x}_{\rm red}
    \le
    -2\alpha_{\mathrm{FL}}W,\\
    &\|\nabla W(\mathbf x)\|
    \le
    c_3 \varepsilon_{\mathrm d}(\mathbf x),
\ea
\end{equation}
where \(\dot{\mathbf x}_{\rm red}\) denotes the vector field of the ideal FL
dynamics \eqref{eq:ideal-fl-algorithm-dist}, and
\(\varepsilon_{\mathrm d}(\mathbf x):=
\|\mathbf x-\mathbf x_{\mathrm d}^{\ast}\|
+\|\mathbf L\mathbf x\|
+\|\mathbf h^{\mathrm d}(\mathbf x)\|\).

For the full system, since
\(\hat{\mathbf w}=\mathbf B_{\mathrm d}(\mathbf x)\hat{\mathbf u}\), the full
dynamics of \eqref{eq:fast-algorithm-dist} in the
\((\mathbf x,\hat{\mathbf w})\)-coordinates can be written as
\[
\begin{aligned}
    \dot{\mathbf x}
    &=
    \dot{\mathbf x}_{\rm red}
    -
    \hat{\mathbf w},\\
    \dot{\hat{\mathbf w}}
    &=
    -
    \frac{1}{\tau}
    \mathbf N_{\mathrm d}(\mathbf x)\hat{\mathbf w}
    +
    \bm\Delta(\mathbf x,\hat{\mathbf w}),
\end{aligned}
\]
where \(\dot{\mathbf x}_{\rm red}\) is the ideal FL vector field in
\eqref{eq:ideal-fl-algorithm-dist} and
\(
    \bm\Delta(\mathbf x,\hat{\mathbf w})
    =
    \frac{\partial \mathbf B_{\mathrm d}}{\partial \mathbf x}(\mathbf x)
    [\dot{\mathbf x}]
    \hat{\mathbf u}
    -
    \mathbf B_{\mathrm d}(\mathbf x)
    \frac{\partial \mathbf u_{\mathrm d}^{\rm q}}{\partial \mathbf x}(\mathbf x)
    \dot{\mathbf x}.
\)
Here, the first equation follows from
\(\dot{\mathbf x}
=-\nabla f(\mathbf x)-\mathbf v
-\tilde{\mathbf J}^{\mathrm d}(\mathbf x)^\top\bm\mu
=\dot{\mathbf x}_{\rm red}-\hat{\mathbf w}\). The second equation follows by
differentiating \(\hat{\mathbf w}=\mathbf B_{\mathrm d}(\mathbf x)\hat{\mathbf u}\)
and using
\(\tau\dot{\hat{\mathbf u}}
=-\mathbf K_0\mathbf M_{\mathrm d}(\mathbf x)\hat{\mathbf u}
-\tau\frac{\partial \mathbf u_{\mathrm d}^{\rm q}}{\partial \mathbf x}
(\mathbf x)\dot{\mathbf x}\), together with
\(\mathbf B_{\mathrm d}(\mathbf x)\mathbf K_0
\mathbf M_{\mathrm d}(\mathbf x)\hat{\mathbf u}
=
\mathbf N_{\mathrm d}(\mathbf x)\hat{\mathbf w}\).

We now estimate the two components of the full system. From
\eqref{eq:reduced-lyapunov-dist} and
\(\dot{\mathbf x}=\dot{\mathbf x}_{\rm red}-\hat{\mathbf w}\), one has
\[
    \dot W
    \le
    -2\alpha_{\mathrm{FL}}W
    +
    c_3\varepsilon_{\mathrm d}(\mathbf x)\|\hat{\mathbf w}\|.
\]
Next, fix a sufficiently small radius \(\rho>0\) such that all the preceding
local estimates hold whenever \(\varepsilon_{\mathrm d}(\mathbf x)\le\rho\) and
\(\|\hat{\mathbf w}\|\le\rho\). On this region, choose constants
\(L_r,L_u,L_B>0\) such that
\(\|\dot{\mathbf x}_{\rm red}\|\le L_r \varepsilon_{\mathrm d}(\mathbf x)\),
\(\|\mathbf B_{\mathrm d}(\mathbf x)
\frac{\partial \mathbf u_{\mathrm d}^{\rm q}}{\partial \mathbf x}
(\mathbf x)\xi\|\le L_u\|\xi\|\), and
\(\|\frac{\partial \mathbf B_{\mathrm d}}{\partial \mathbf x}
(\mathbf x)[\xi]\eta\|\le L_B\|\xi\|\,\|\eta\|\). Since
\(\|\dot{\mathbf x}\|\le L_r \varepsilon_{\mathrm d}(\mathbf x)+\|\hat{\mathbf w}\|\)
and \(\|\hat{\mathbf u}\|\le\underline b^{-1}\|\hat{\mathbf w}\|\), the
definition of \(\bm\Delta\) gives
\[
    \|\bm\Delta(\mathbf x,\hat{\mathbf w})\|
    \le
    d_x\varepsilon_{\mathrm d}(\mathbf x)
    +
    d_w\|\hat{\mathbf w}\|,
\]
where \(d_x:=L_uL_r+L_B\underline b^{-1}L_r\rho\) and
\(d_w:=L_u+L_B\underline b^{-1}\rho\). Hence, using
\eqref{eq:N-positive-dist}, \(V_b=\frac12\|\hat{\mathbf w}\|^2\) satisfies
\[
    \dot V_b
    \le
    -
    \frac{2\beta}{\tau}V_b
    +
    d_x\varepsilon_{\mathrm d}(\mathbf x)\|\hat{\mathbf w}\|
    +
    d_w\|\hat{\mathbf w}\|^2 .
\]

Fix any \(\alpha_{\mathrm{SP}}\in(0,\alpha_{\mathrm{FL}})\) and set
\(\gamma:=\alpha_{\mathrm{FL}}-\alpha_{\mathrm{SP}}>0\). Consider the composite
Lyapunov function
\(\bm\nu(\mathbf x,\hat{\mathbf w}):=W(\mathbf x)+V_b\). Combining the preceding
two inequalities gives
\[
\ba
    \dot{\bm\nu}
    &\le
    -2\alpha_{\mathrm{FL}}W
    -
    \frac{2\beta}{\tau}V_b
    +
    (c_3+d_x)\varepsilon_{\mathrm d}(\mathbf x)\|\hat{\mathbf w}\|
    +
    d_w\|\hat{\mathbf w}\|^2 \\
    &\le
    -2\alpha_{\mathrm{FL}}W
    -
    \frac{2\beta}{\tau}V_b
    +
    c_1\gamma \varepsilon_{\mathrm d}(\mathbf x)^2
    +
    \frac{(c_3+d_x)^2}{4c_1\gamma}
    \|\hat{\mathbf w}\|^2
    +
    d_w\|\hat{\mathbf w}\|^2 .
\ea
\]
Since \(W\ge c_1\varepsilon_{\mathrm d}(\mathbf x)^2\), it follows that
\(-2\alpha_{\mathrm{FL}}W+c_1\gamma \varepsilon_{\mathrm d}(\mathbf x)^2
\le -2\alpha_{\mathrm{SP}}W\). Moreover, since
\(\|\hat{\mathbf w}\|^2=2V_b\), if
\[
    0<\tau<\tau^\ast
    :=
    \frac{\beta}{
        \alpha_{\mathrm{SP}}
        +
        d_w
        +
        \frac{(c_3+d_x)^2}{4c_1\gamma}
    },
\]
then
\[
    \frac{\beta}{\tau}
    -
    d_w
    -
    \frac{(c_3+d_x)^2}{4c_1\gamma}
    \ge
    \alpha_{\mathrm{SP}} .
\]
Therefore, \(\dot{\bm\nu}\le -2\alpha_{\mathrm{SP}}\bm\nu\). Let
\(m_\nu:=\min\{c_1,\frac12\}\) and \(M_\nu:=\max\{c_2,\frac12\}\). Then
\(m_\nu(\varepsilon_{\mathrm d}(\mathbf x)^2+\|\hat{\mathbf w}\|^2)\le\bm\nu
\le M_\nu(\varepsilon_{\mathrm d}(\mathbf x)^2+\|\hat{\mathbf w}\|^2)\). Hence
\begin{equation}
\label{eq:conver_SP_DIS}
    \sqrt{
        \varepsilon_{\mathrm d}(\mathbf x(t))^2
        +
        \|\hat{\mathbf w}(t)\|^2
    }
    \le
    \sqrt{\frac{M_\nu}{m_\nu}}
    e^{-\alpha_{\mathrm{SP}}t}
    \sqrt{
        \varepsilon_{\mathrm d}(\mathbf x(0))^2
        +
        \|\hat{\mathbf w}(0)\|^2
    } .
\end{equation}
It remains to translate this estimate back to the
\((\mathbf x,\mathbf z)\)-coordinates. For each fixed
\(\tau\in(0,\tau^\ast)\), choose local constants \(L_y,L_q,C_e>0\) such that
\(\|\mathbf y_{\mathrm d}(\mathbf x)\|\le L_y\varepsilon_{\mathrm d}(\mathbf x)\),
\(\|\mathbf u_{\mathrm d}^{\rm q}(\mathbf x)-\mathbf u_{\mathrm d}^{\ast}\|
\le L_q\varepsilon_{\mathrm d}(\mathbf x)\), and
\(\varepsilon_{\mathrm d}(\mathbf x)\le C_e\|\mathbf x-\mathbf x_{\mathrm d}^{\ast}\|\).
Let \(c_\tau:=\|\mathbf K_p\|L_y+L_q\). Since
\(\mathbf u_{\mathrm d}=\mathbf z+\mathbf K_p\mathbf y_{\mathrm d}(\mathbf x)\),
\(\mathbf z^\ast=\mathbf u_{\mathrm d}^{\ast}\), and
\(\|\hat{\mathbf w}\|\le\overline b\|\hat{\mathbf u}\|\), one has
\[
    \|\hat{\mathbf w}(0)\|
    \le
    \overline b
    \left(
        \|\mathbf z(0)-\mathbf z^\ast\|
        +
        c_\tau\varepsilon_{\mathrm d}(\mathbf x(0))
    \right).
\]
Together with
\(\varepsilon_{\mathrm d}(\mathbf x(0))
\le C_e\|\mathbf x(0)-\mathbf x_{\mathrm d}^{\ast}\|\), this gives
\[
    \sqrt{
        \varepsilon_{\mathrm d}(\mathbf x(0))^2
        +
        \|\hat{\mathbf w}(0)\|^2
    }
    \le
    C_0
    \left(
        \|\mathbf x(0)-\mathbf x_{\mathrm d}^{\ast}\|
        +
        \|\mathbf z(0)-\mathbf z^\ast\|
    \right),
\]
where \(C_0:=\max\{C_e(1+\overline b c_\tau),\overline b\}\).

Next, using
\(\mathbf z=\mathbf u_{\mathrm d}-\mathbf K_p\mathbf y_{\mathrm d}(\mathbf x)\),
\(\mathbf z^\ast=\mathbf u_{\mathrm d}^{\ast}\),
\(\|\hat{\mathbf u}\|\le\underline b^{-1}\|\hat{\mathbf w}\|\), and the
definitions of \(L_y\), \(L_q\), and \(c_\tau\), we obtain
\[
    \|\mathbf z(t)-\mathbf z^\ast\|
    \le
    \underline b^{-1}\|\hat{\mathbf w}(t)\|
    +
    c_\tau\varepsilon_{\mathrm d}(\mathbf x(t)).
\]
Therefore, with \(C_z:=1+c_\tau+\underline b^{-1}\), it follows that
\[
    \varepsilon_{\mathrm d}(\mathbf x(t))
    +
    \|\mathbf z(t)-\mathbf z^\ast\|
    \le
    C_z
    \sqrt{
        \varepsilon_{\mathrm d}(\mathbf x(t))^2
        +
        \|\hat{\mathbf w}(t)\|^2
    }.
\]
Combining this inequality with \eqref{eq:conver_SP_DIS} yields
\[
\begin{aligned}
    &\|\mathbf x(t)-\mathbf x_{\mathrm d}^{\ast}\|
    +
    \|\mathbf z(t)-\mathbf z^\ast\|
    +
    \|\mathbf L\mathbf x(t)\|
    +
    \|\mathbf h^{\mathrm d}(\mathbf x(t))\| \\
    &\qquad\le
    C_{\mathrm{SP}}e^{-\alpha_{\mathrm{SP}}t}
    \left(
        \|\mathbf x(0)-\mathbf x_{\mathrm d}^{\ast}\|
        +
        \|\mathbf z(0)-\mathbf z^\ast\|
    \right),
\end{aligned}
\]
where \(C_{\mathrm{SP}}:=C_z\sqrt{M_\nu/m_\nu}\,C_0\).

Finally, choose
\(\delta_{\mathrm{SP}}:=\min\{\delta_0,\,
C_0^{-1}\sqrt{m_\nu/M_\nu}\rho\}\), where \(\delta_0>0\) is sufficiently small
so that all local estimates above hold. Then
\(\|\mathbf x(0)-\mathbf x_{\mathrm d}^{\ast}\|
+\|\mathbf z(0)-\mathbf z^\ast\|<\delta_{\mathrm{SP}}\) implies
\(
    \sqrt{
        \varepsilon_{\mathrm d}(\mathbf x(0))^2
        +
        \|\hat{\mathbf w}(0)\|^2
    }
    <
    \sqrt{\frac{m_\nu}{M_\nu}}\rho .
\)
Thus \(\bm\nu(0)<m_\nu\rho^2\). Since
\(\dot{\bm\nu}\le-2\alpha_{\mathrm{SP}}\bm\nu\), the set
\(\{\bm\nu\le m_\nu\rho^2\}\) is forward invariant. Hence
\(\varepsilon_{\mathrm d}(\mathbf x(t))\le\rho\) and
\(\|\hat{\mathbf w}(t)\|\le\rho\) for all \(t\ge0\). Therefore, the trajectory
stays in the local region, and the estimate \eqref{eq:sp-bound-dist}
follows. Since \(\gamma=\alpha_{\mathrm{FL}}-\alpha_{\mathrm{SP}}>0\) can be
chosen arbitrarily small, this completes the proof.

\section{Proof of Proposition~\ref{prop:fl-coupled}}
\label{app:fl-coupled}

The proof of Proposition~\ref{prop:fl-coupled} follows the same line of argument as the proof of Proposition~\ref{prop:fl-dist} in Appendix~\ref{app:fl-dist}. Therefore, we present a streamlined proof below, emphasizing only the main differences from Proposition~\ref{prop:fl-dist}.

Since \(\bm\zeta(0)\in\operatorname{Im}\mathbf L_q\) and by \eqref{eq:ideal-fl-algorithm-coupled},
in this proof, we work on the invariant subspace
\(\bm\zeta\in\operatorname{Im}\mathbf L_q\). Since
\(\mathbf x_{\mathrm c}^{\ast}=\mathbf 1_N\otimes\mathbf s_{\mathrm c}^{\ast}\)
and \(\mathbf s_{\mathrm c}^{\ast}\in\Omega_{\mathrm c}\), one has
\((\mathbf 1_N^\top\otimes\mathbf I_q)\mathbf h^{\mathrm c}
(\mathbf x_{\mathrm c}^{\ast})=\mathbf 0\). Hence
\(\mathbf h^{\mathrm c}(\mathbf x_{\mathrm c}^{\ast})\in\operatorname{Im}\mathbf L_q\).
Because \(\mathbf L_q\) is nonsingular on \(\operatorname{Im}\mathbf L_q\),
there exists a unique \(\bm\zeta^\ast\in\operatorname{Im}\mathbf L_q\) such
that
\(\mathbf h^{\mathrm c}(\mathbf x_{\mathrm c}^{\ast})
+\mathbf L_q\bm\zeta^\ast=\mathbf 0\).

Since \(\mathbf G_{\mathrm c}(\mathbf 0)=\mathbf 0\), the ideal FL law gives
\(\dot{\mathbf y}_{\mathrm c}=\mathbf G_{\mathrm c}(\mathbf y_{\mathrm c})\), so
the zero-output set \(\mathcal Z_{\mathrm c}=\{\mathbf y_{\mathrm c}=\mathbf 0\}\)
is a locally invariant manifold of \eqref{eq:ideal-fl-algorithm-coupled}, and by the above design of \(\mathbf G_{\mathrm c}\), the output converges to it exponentially with
rate \(\alpha_G\).
Moreover, one can introduce a reduced coordinate
\(\bm\eta\in\mathbb R^{Nd-m_{\rm c}}\), with
\(m_{\rm c}:=(N-1)d+q\), to parametrize the tangent motion of the
\(\mathbf x\)-component along \(\mathcal Z_{\mathrm c}\). Under this coordinate
representation, the reduced dynamics coincides with the projected gradient
flow of \(F\) on \(\Omega_{\mathrm c}\), and the restricted quadratic growth in
Assumption~\ref{asm:growth} gives the same dissipation estimate as
\eqref{eq:eta-dissipation-dist}, with \(\rho_{\mathrm d}\) replaced by
\(\rho_{\mathrm c}\). In particular, the corresponding reduced convergence rate
is \(\rho_{\mathrm c}/N\).

Combined with the exponentially stable output subsystem
\(\dot{\mathbf y}_{\mathrm c}=\mathbf G_{\mathrm c}(\mathbf y_{\mathrm c})\), the
composite Lyapunov function \(W=\mu V_1+V_2\) gives
\(\dot W\le-2\alpha_{\mathrm{FL}}W\) for any
\(\alpha_{\mathrm{FL}}\in(0,\min\{\rho_{\mathrm c}/N,\alpha_G\})\), exactly as in
Appendix~\ref{app:fl-dist}. The only difference from Proposition~\ref{prop:fl-dist}
lies in the initial estimate. There, the residual
\(\mathbf h^{\mathrm d}(\mathbf x)\) is a function of \(\mathbf x\) alone, so
the entire error is bounded by
\(\|\mathbf x(0)-\mathbf x_{\mathrm d}^{\ast}\|\). Here, the lifted residual
\(\mathbf h^{\mathrm c}(\mathbf x)+\mathbf L_q\bm\zeta\) depends also on
\(\bm\zeta\), and \eqref{eq:diffeo-bound-dist} becomes
\[
    \left\|
        \operatorname{col}({\bm\eta},\mathbf y_{\mathrm c})
    \right\|
    \le
    L_\Phi'
    \big(
        \|\mathbf x(0)-\mathbf x_{\mathrm c}^{\ast}\|
        +
        \|\mathbf h^{\mathrm c}(\mathbf x(0))+\mathbf L_q\bm\zeta(0)\|
    \big)
\]
for some local constant \(L_\Phi'>0\). Consequently,
\begin{equation}
    \label{eq:conver_cou}
\begin{aligned}
    &\|\mathbf x(t)-\mathbf x_{\mathrm c}^{\ast}\|
    +\|\mathbf L\mathbf x(t)\|
    +\|\mathbf h^{\mathrm c}(\mathbf x(t))+\mathbf L_q\bm\zeta(t)\|\\
    &\qquad\le
    \bar C_{\mathrm{FL}}e^{-\alpha_{\mathrm{FL}}t}
    \big(
        \|\mathbf x(0)-\mathbf x_{\mathrm c}^{\ast}\|
        +\|\mathbf h^{\mathrm c}(\mathbf x(0))+\mathbf L_q\bm\zeta(0)\|
    \big),
    \qquad t\ge0,
\end{aligned}
\end{equation}
where \(\bar C_{\mathrm{FL}}:=(L_\Psi+c_y)c_0L_\Phi'\).

It remains to include \(\bm\zeta(t)-\bm\zeta^\ast\) in the estimate. Let
\(\sigma_q>0\) be the smallest positive singular value of \(\mathbf L_q\). Since
\(\bm\zeta(t),\bm\zeta^\ast\in\operatorname{Im}\mathbf L_q\), and
\(\mathbf h^{\mathrm c}\) is locally Lipschitz, there exists \(L_h>0\) such that
\(\|\mathbf h^{\mathrm c}(\mathbf x)-\mathbf h^{\mathrm c}(\mathbf x_{\mathrm c}^{\ast})\|
\le L_h\|\mathbf x-\mathbf x_{\mathrm c}^{\ast}\|\). Using
\(\mathbf h^{\mathrm c}(\mathbf x_{\mathrm c}^{\ast})+\mathbf L_q\bm\zeta^\ast=\mathbf 0\),
we have
\[
    \|\bm\zeta(t)-\bm\zeta^\ast\|
    \le
    \sigma_q^{-1}
    \left(
        \|\mathbf h^{\mathrm c}(\mathbf x(t))+\mathbf L_q\bm\zeta(t)\|
        +
        L_h\|\mathbf x(t)-\mathbf x_{\mathrm c}^{\ast}\|
    \right).
\]
Therefore, with \eqref{eq:conver_cou},
\[
\begin{aligned}
    &\|\mathbf x(t)-\mathbf x_{\mathrm c}^{\ast}\|
    +\|\bm\zeta(t)-\bm\zeta^\ast\|
    +\|\mathbf L\mathbf x(t)\|
    +\|\mathbf h^{\mathrm c}(\mathbf x(t))+\mathbf L_q\bm\zeta(t)\|\\
    &\qquad\le
    c_\zeta \bar C_{\mathrm{FL}}e^{-\alpha_{\mathrm{FL}}t}
    \big(
        \|\mathbf x(0)-\mathbf x_{\mathrm c}^{\ast}\|
        +\|\mathbf h^{\mathrm c}(\mathbf x(0))+\mathbf L_q\bm\zeta(0)\|
    \big),
\end{aligned}
\]
where \(c_\zeta:=1+\sigma_q^{-1}(1+L_h)\). Finally, by local Lipschitz
continuity of \(\mathbf h^{\mathrm c}\) and
\(\mathbf h^{\mathrm c}(\mathbf x_{\mathrm c}^{\ast})+\mathbf L_q\bm\zeta^\ast=\mathbf 0\),
\[
    \|\mathbf h^{\mathrm c}(\mathbf x(0))+\mathbf L_q\bm\zeta(0)\|
    \le
    L_h\|\mathbf x(0)-\mathbf x_{\mathrm c}^{\ast}\|
    +
    \|\mathbf L_q\|\,\|\bm\zeta(0)-\bm\zeta^\ast\|.
\]
Thus the preceding estimate implies
\[
\begin{aligned}
    &\|\mathbf x(t)-\mathbf x_{\mathrm c}^{\ast}\|
    +\|\bm\zeta(t)-\bm\zeta^\ast\|
    +\|\mathbf L\mathbf x(t)\|
    +\|\mathbf h^{\mathrm c}(\mathbf x(t))+\mathbf L_q\bm\zeta(t)\|\\
    &\qquad\le
    C_{\mathrm{FL}}e^{-\alpha_{\mathrm{FL}}t}
    \left(
        \|\mathbf x(0)-\mathbf x_{\mathrm c}^{\ast}\|
        +
        \|\bm\zeta(0)-\bm\zeta^\ast\|
    \right),
    \qquad t\ge0,
\end{aligned}
\]
where
\(
    C_{\mathrm{FL}}
    :=
        c_\zeta \bar C_{\mathrm{FL}}
        \max\{1+L_h,\|\mathbf L_q\|\}.
\)
This completes the proof.

\section{Proof of Theorem~\ref{thm:sp-coupled}}
\label{app:sp-coupled}

We first prove that \eqref{eq:sp-compact-coupled} admits a locally unique
equilibrium. Since \eqref{eq:sp-compact-coupled} is equivalent to
\eqref{eq:fast-algorithm-coupled} via
\(\mathbf u_{\mathrm c}=\mathbf z+\mathbf K_p\mathbf y_{\mathrm c}\), it suffices
to determine the equilibria of \eqref{eq:fast-algorithm-coupled} in the
\((\bm\chi,\mathbf u_{\mathrm c})\)-coordinates, restricted to the invariant
subspace \(\bm\zeta\in\operatorname{Im}\mathbf L_q\). At an equilibrium,
\(\dot{\mathbf u}_{\mathrm c}=\mathbf0\) gives the algebraic relation of the
ideal FL law. Hence, by Lemma~\ref{lem:fl-wp-coupled}, the input is uniquely
determined by \(\bm\chi\), i.e.,
\(\mathbf u_{\mathrm c}=\mathbf u_{\mathrm c}^{\rm q}(\bm\chi)\). Substituting
this relation into \(\dot{\bm\chi}=\mathbf0\) gives the ideal FL dynamics
\eqref{eq:ideal-fl-algorithm-coupled} at rest. By
Proposition~\ref{prop:fl-coupled}, \(\bm\chi_{\mathrm c}^{\ast}:=\operatorname{col}
(\mathbf x_{\mathrm c}^{\ast},\bm\zeta^\ast)\) is a locally exponentially
stable, and hence locally isolated, equilibrium of the ideal FL dynamics. Denote
\(\mathbf u_{\mathrm c}^{\ast}:=\mathbf u_{\mathrm c}^{\rm q}
(\bm\chi_{\mathrm c}^{\ast})\). Consequently, \eqref{eq:sp-compact-coupled}
has the locally unique equilibrium
\((\mathbf x_{\mathrm c}^{\ast},\bm\zeta^\ast,\mathbf z^\ast)\), where
\(\mathbf z^\ast=\mathbf u_{\mathrm c}^{\ast}
-\mathbf K_p\mathbf y_{\mathrm c}(\bm\chi_{\mathrm c}^{\ast})
=\mathbf u_{\mathrm c}^{\ast}\).

Let \(\hat{\mathbf u}:=\mathbf u_{\mathrm c}
-\mathbf u_{\mathrm c}^{\rm q}(\bm\chi)\), with
\(\hat{\mathbf u}=\operatorname{col}(\hat{\mathbf v},\hat{\bm\mu})\). With
\(\bm\chi\) frozen and with the fast time \(s:=t/\tau\), the fast equation in
\eqref{eq:fast-algorithm-coupled} gives
\[
    \frac{d\hat{\mathbf u}}{ds}
    =
    -\mathbf K_0\mathbf M_{\mathrm c}(\bm\chi)\hat{\mathbf u}.
\]
Define the effective fast error as
\(
    \hat{\mathbf w}
    :=
    \operatorname{col}(\hat{\mathbf w}_x,\hat{\mathbf w}_\zeta)
    :=\mathbf B_{\mathrm c}(\bm\chi)\hat{\mathbf u}=
    \operatorname{col}
    \big(
        \hat{\mathbf v}
        +\tilde{\mathbf J}^{\mathrm c}(\mathbf x)^\top\hat{\bm\mu},
        \mathbf L_q\hat{\bm\mu}
    \big)
    \in\mathcal N_{\mathrm c}(\bm\chi),
\)
where
\[
    \mathcal N_{\mathrm c}(\bm\chi)
    :=
    \left\{
    \operatorname{col}
    \big(
        \mathbf v+
        \tilde{\mathbf J}^{\mathrm c}(\mathbf x)^\top\bm\mu,
        \mathbf L_q\bm\mu
    \big):
    \mathbf v\in\operatorname{Im}\mathbf L,\
    \bm\mu\in\mathbb R^{Nq}
    \right\}.
\]
Using the block form of \(\mathbf M_{\mathrm c}\), one obtains
\[
    \frac{d\hat{\mathbf w}}{ds}
    =
    -\mathbf N_{\mathrm c}(\bm\chi)\hat{\mathbf w},
\]
where
\[
\mathbf N_{\mathrm c}(\bm\chi)
:=\mathbf B_{\mathrm c}(\bm\chi)\mathbf K_0\mathbf A_{\mathrm c}(\bm\chi)=
\begin{bmatrix}
K_{0c}\mathbf L
+K_{0h}\tilde{\mathbf J}^{\mathrm c}(\mathbf x)^\top
\tilde{\mathbf J}^{\mathrm c}(\mathbf x)
&
K_{0h}\tilde{\mathbf J}^{\mathrm c}(\mathbf x)^\top\mathbf L_q
\\
K_{0h}\mathbf L_q\tilde{\mathbf J}^{\mathrm c}(\mathbf x)
&
K_{0h}\mathbf L_q^2
\end{bmatrix}.
\]

We next show that $\mathbf N_{\mathrm c}(\bm\chi)$ is positive definite on
\(\mathcal N_{\mathrm c}(\bm\chi)\). Take any
\(\mathbf w=\operatorname{col}(\mathbf w_x,\mathbf w_\zeta)
\in\mathcal N_{\mathrm c}(\bm\chi)\). Then
\[
    \mathbf w^\top\mathbf N_{\mathrm c}(\bm\chi)\mathbf w
    =
    K_{0c}\mathbf w_x^\top\mathbf L\mathbf w_x
    +
    K_{0h}
    \left\|
        \tilde{\mathbf J}^{\mathrm c}(\mathbf x)\mathbf w_x
        +
        \mathbf L_q\mathbf w_\zeta
    \right\|^2
    \ge0 .
\]
If the above quantity is zero, then
\(\mathbf L\mathbf w_x=\mathbf0\) and
\(\tilde{\mathbf J}^{\mathrm c}(\mathbf x)\mathbf w_x
+\mathbf L_q\mathbf w_\zeta=\mathbf0\). Since
\(\mathbf w\in\mathcal N_{\mathrm c}(\bm\chi)\), there exist
\(\mathbf v\in\operatorname{Im}\mathbf L\) and
\(\bm\mu\in\mathbb R^{Nq}\) such that
\(\mathbf w_x=\mathbf v+\tilde{\mathbf J}^{\mathrm c}(\mathbf x)^\top\bm\mu\)
and \(\mathbf w_\zeta=\mathbf L_q\bm\mu\). Therefore,
\[
\begin{aligned}
    \|\mathbf w\|^2
    &=
    \mathbf w_x^\top
    \big(
        \mathbf v+\tilde{\mathbf J}^{\mathrm c}(\mathbf x)^\top\bm\mu
    \big)
    +
    \mathbf w_\zeta^\top\mathbf L_q\bm\mu  \\
    &=
    \mathbf w_x^\top\mathbf v
    +
    \bm\mu^\top
    \big(
        \tilde{\mathbf J}^{\mathrm c}(\mathbf x)\mathbf w_x
        +
        \mathbf L_q\mathbf w_\zeta
    \big)
    =0 .
\end{aligned}
\]
Here we used \(\mathbf w_x\in\ker\mathbf L\), 
\(\mathbf v\in\operatorname{Im}\mathbf L\), and hence
\(\mathbf w_x^\top\mathbf v=0\). Thus \(\mathbf w=\mathbf0\), which proves
that \(\mathbf N_{\mathrm c}(\bm\chi)\) is positive definite on
\(\mathcal N_{\mathrm c}(\bm\chi)\). By continuity, after shrinking the local
neighborhood if necessary, there exists \(\beta_{\mathrm c}>0\) such that
\[
    \mathbf w^\top\mathbf N_{\mathrm c}(\bm\chi)\mathbf w
    \ge
    \beta_{\mathrm c}\|\mathbf w\|^2,
    \qquad
    \forall \mathbf w\in\mathcal N_{\mathrm c}(\bm\chi).
\]
Consequently, for \(V_b:=\frac12\|\hat{\mathbf w}\|^2\),
\[
    \frac{dV_b}{ds}
    =
    -\hat{\mathbf w}^\top
    \mathbf N_{\mathrm c}(\bm\chi)\hat{\mathbf w}
    \le
    -2\beta_{\mathrm c}V_b .
\]
Hence the boundary-layer system is locally uniformly exponentially stable.

We also need the equivalence between \(\hat{\mathbf u}\) and
\(\hat{\mathbf w}\). The map
\(\operatorname{col}(\mathbf v,\bm\mu)\mapsto
\operatorname{col}(\mathbf v+\tilde{\mathbf J}^{\mathrm c}(\mathbf x)^\top\bm\mu,
\mathbf L_q\bm\mu)\) is locally nonsingular on
\(\operatorname{Im}\mathbf L\times\mathbb R^{Nq}\). Indeed, if
\(\mathbf v+\tilde{\mathbf J}^{\mathrm c}(\mathbf x)^\top\bm\mu=\mathbf0\) and
\(\mathbf L_q\bm\mu=\mathbf0\), then
\(\bm\mu=\mathbf1_N\otimes\beta\) for some \(\beta\in\mathbb R^q\). Since
\(\mathbf v\in\operatorname{Im}\mathbf L\), premultiplication by
\(\mathbf1_N^\top\otimes\mathbf I_d\) gives
\(\sum_{i=1}^{N}\mathbf J_i^{\mathrm c}(\mathbf x_i)^\top\beta=\mathbf0\).
By Assumption~\ref{asm:reg-coupled}, this yields \(\beta=\mathbf0\), and hence
\(\bm\mu=\mathbf0\) and \(\mathbf v=\mathbf0\). Therefore, there exist
constants \(\underline b,\overline b>0\) such that, locally, for all
\(\operatorname{col}(\mathbf v,\bm\mu)\in
\operatorname{Im}\mathbf L\times\mathbb R^{Nq}\),
\[
\begin{aligned}
    \underline b
    \left\|
        \operatorname{col}(\mathbf v,\bm\mu)
    \right\|
    &\le
    \left\|
        \operatorname{col}
        \big(
            \mathbf v+\tilde{\mathbf J}^{\mathrm c}(\mathbf x)^\top\bm\mu,
            \mathbf L_q\bm\mu
        \big)
    \right\|  \le
    \overline b
    \left\|
        \operatorname{col}(\mathbf v,\bm\mu)
    \right\|.
\end{aligned}
\]
In particular, \(\hat{\mathbf w}=\mathbf0\) is equivalent to
\(\hat{\mathbf u}=\mathbf0\). Hence the reduced system is obtained by setting
\(\mathbf u_{\mathrm c}=\mathbf u_{\mathrm c}^{\rm q}(\bm\chi)\), and is exactly
the ideal FL dynamics \eqref{eq:ideal-fl-algorithm-coupled}.

By Proposition~\ref{prop:fl-coupled}, for any admissible
\(\alpha_{\mathrm{FL}}\in(0,\min\{\rho_{\mathrm c}/N,\alpha_G\})\), there is a
local Lyapunov function \(W(\bm\chi)\) and constants \(c_1,c_2,c_3>0\) such that,
with
\[
    \varepsilon_{\mathrm c}(\bm\chi)
    :=
    \|\mathbf x-\mathbf x_{\mathrm c}^{\ast}\|
    +
    \|\bm\zeta-\bm\zeta^\ast\|
    +
    \|\mathbf L\mathbf x\|
    +
    \|\mathbf h^{\mathrm c}(\mathbf x)+\mathbf L_q\bm\zeta\|,
\]
one has
\[
\begin{aligned}
    &c_1\varepsilon_{\mathrm c}(\bm\chi)^2
    \le W(\bm\chi)
    \le c_2\varepsilon_{\mathrm c}(\bm\chi)^2,\\
    &\nabla W(\bm\chi)^\top\dot{\bm\chi}_{\rm red}
    \le -2\alpha_{\mathrm{FL}}W(\bm\chi),\\
    &\|\nabla W(\bm\chi)\|
    \le c_3\varepsilon_{\mathrm c}(\bm\chi),
\end{aligned}
\]
where \(\dot{\bm\chi}_{\rm red}\) denotes the vector field of the ideal FL
dynamics.

For the full system, the definition of \(\hat{\mathbf w}\) gives
\[
\begin{aligned}
    \dot{\bm\chi}
    &=
    \dot{\bm\chi}_{\rm red}-\hat{\mathbf w},\\
    \dot{\hat{\mathbf w}}
    &=
    -\frac1\tau\mathbf N_{\mathrm c}(\bm\chi)\hat{\mathbf w}
    +\bm\Delta(\bm\chi,\hat{\mathbf w}),
\end{aligned}
\]
where
\(
    \bm\Delta(\bm\chi,\hat{\mathbf w})
    =
    \frac{\partial \mathbf B_{\mathrm c}}{\partial \bm\chi}(\bm\chi)
    [\dot{\bm\chi}]\hat{\mathbf u}
    -
    \mathbf B_{\mathrm c}(\bm\chi)
    \frac{\partial \mathbf u_{\mathrm c}^{\rm q}}
    {\partial \bm\chi}(\bm\chi)\dot{\bm\chi}.
\)

We now estimate the two components of the full system. From the above Lyapunov
estimate and \(\dot{\bm\chi}=\dot{\bm\chi}_{\rm red}-\hat{\mathbf w}\), one has
\[
    \dot W
    \le
    -2\alpha_{\mathrm{FL}}W
    +
    c_3\varepsilon_{\mathrm c}(\bm\chi)\|\hat{\mathbf w}\|.
\]
Fix a sufficiently small radius \(\rho>0\) such that all the preceding local
estimates hold whenever \(\varepsilon_{\mathrm c}(\bm\chi)\le\rho\) and
\(\|\hat{\mathbf w}\|\le\rho\). On this region, choose constants
\(L_r,L_u,L_B>0\) such that
\(\|\dot{\bm\chi}_{\rm red}\|\le L_r\varepsilon_{\mathrm c}(\bm\chi)\),
\(\|\mathbf B_{\mathrm c}(\bm\chi)
\frac{\partial \mathbf u_{\mathrm c}^{\rm q}}{\partial \bm\chi}(\bm\chi)\xi\|
\le L_u\|\xi\|\), and
\(\|\frac{\partial \mathbf B_{\mathrm c}}{\partial \bm\chi}(\bm\chi)[\xi]\eta\|
\le L_B\|\xi\|\,\|\eta\|\). Since
\(\|\dot{\bm\chi}\|\le L_r\varepsilon_{\mathrm c}(\bm\chi)+\|\hat{\mathbf w}\|\) and
\(\|\hat{\mathbf u}\|\le\underline b^{-1}\|\hat{\mathbf w}\|\), the definition
of \(\bm\Delta\) gives
\[
\begin{aligned}
    \|\bm\Delta(\bm\chi,\hat{\mathbf w})\|
    &\le
    d_x\varepsilon_{\mathrm c}(\bm\chi)
    +
    d_w\|\hat{\mathbf w}\|,
\end{aligned}
\]
where \(d_x:=L_uL_r+L_B\underline b^{-1}L_r\rho\) and
\(d_w:=L_u+L_B\underline b^{-1}\rho\). Using the positive definiteness of
\(\mathbf N_{\mathrm c}\) on \(\mathcal N_{\mathrm c}\), we also have
\[
    \dot V_b
    \le
    -\frac{2\beta}{\tau}V_b
    +
    d_x\varepsilon_{\mathrm c}(\bm\chi)\|\hat{\mathbf w}\|
    +
    d_w\|\hat{\mathbf w}\|^2 .
\]

Fix \(\alpha_{\mathrm{SP}}\in(0,\alpha_{\mathrm{FL}})\) and set
\(\gamma:=\alpha_{\mathrm{FL}}-\alpha_{\mathrm{SP}}>0\). Consider
\(\bm\nu(\bm\chi,\hat{\mathbf w}):=W(\bm\chi)+V_b\). Combining the preceding two
inequalities yields
\[
\begin{aligned}
    \dot{\bm\nu}
    &\le
    -2\alpha_{\mathrm{FL}}W
    -\frac{2\beta}{\tau}V_b
    +(c_3+d_x)\varepsilon_{\mathrm c}(\bm\chi)\|\hat{\mathbf w}\|
    +d_w\|\hat{\mathbf w}\|^2  \\
    &\le
    -2\alpha_{\mathrm{FL}}W
    -\frac{2\beta}{\tau}V_b
    +c_1\gamma \varepsilon_{\mathrm c}(\bm\chi)^2
    +
    \frac{(c_3+d_x)^2}{4c_1\gamma}\|\hat{\mathbf w}\|^2
    +d_w\|\hat{\mathbf w}\|^2 .
\end{aligned}
\]
Since \(W\ge c_1\varepsilon_{\mathrm c}(\bm\chi)^2\), it follows that
\(-2\alpha_{\mathrm{FL}}W+c_1\gamma \varepsilon_{\mathrm c}(\bm\chi)^2
\le -2\alpha_{\mathrm{SP}}W\). Moreover, since
\(\|\hat{\mathbf w}\|^2=2V_b\), if
\[
    0<\tau<\tau^\ast
    :=
    \frac{\beta}{
        \alpha_{\mathrm{SP}}
        +d_w
        +\frac{(c_3+d_x)^2}{4c_1\gamma}
    },
\]
then
\[
    \frac{\beta}{\tau}
    -d_w
    -\frac{(c_3+d_x)^2}{4c_1\gamma}
    \ge
    \alpha_{\mathrm{SP}} .
\]
Therefore, \(\dot{\bm\nu}\le-2\alpha_{\mathrm{SP}}\bm\nu\). Let
\(m_\nu:=\min\{c_1,\frac12\}\) and \(M_\nu:=\max\{c_2,\frac12\}\). Then
\[
    m_\nu
    \big(\varepsilon_{\mathrm c}(\bm\chi)^2+\|\hat{\mathbf w}\|^2\big)
    \le
    \bm\nu
    \le
    M_\nu
    \big(\varepsilon_{\mathrm c}(\bm\chi)^2+\|\hat{\mathbf w}\|^2\big).
\]
Thus
\[
    \sqrt{\varepsilon_{\mathrm c}(\bm\chi(t))^2+\|\hat{\mathbf w}(t)\|^2}
    \le
    \sqrt{\frac{M_\nu}{m_\nu}}
    e^{-\alpha_{\mathrm{SP}}t}
    \sqrt{\varepsilon_{\mathrm c}(\bm\chi(0))^2+\|\hat{\mathbf w}(0)\|^2}.
\]

It remains to translate this estimate back to the
\((\mathbf x,\bm\zeta,\mathbf z)\)-coordinates. For each fixed
\(\tau\in(0,\tau^\ast)\), let \(L_y,L_q>0\) be local constants such that
\(\|\mathbf y_{\mathrm c}(\bm\chi)\|\le L_y\varepsilon_{\mathrm c}(\bm\chi)\) and
\(\|\mathbf u_{\mathrm c}^{\rm q}(\bm\chi)-\mathbf u_{\mathrm c}^{\ast}\|
\le L_q\varepsilon_{\mathrm c}(\bm\chi)\). Put
\(c_\tau:=\|\mathbf K_p\|L_y+L_q\). Since
\(\mathbf u_{\mathrm c}=\mathbf z+\mathbf K_p\mathbf y_{\mathrm c}\) and
\(\mathbf z^\ast=\mathbf u_{\mathrm c}^{\ast}\), the norm equivalence above
implies
\[
\begin{aligned}
    \|\hat{\mathbf w}(0)\|
    &\le
    \overline b
    \|\mathbf u_{\mathrm c}(0)-
    \mathbf u_{\mathrm c}^{\rm q}(\bm\chi(0))\| \\
    &\le
    \overline b
    \Big(
        \|\mathbf z(0)-\mathbf z^\ast\|
        +
        c_\tau \varepsilon_{\mathrm c}(\bm\chi(0))
    \Big).
\end{aligned}
\]
Moreover, by local Lipschitz continuity of \(\mathbf h^{\mathrm c}\), there
exists \(C_e>0\) such that, locally,
\[
    \varepsilon_{\mathrm c}(\bm\chi)
    \le
    C_e
    \left(
        \|\mathbf x-\mathbf x_{\mathrm c}^{\ast}\|
        +
        \|\bm\zeta-\bm\zeta^\ast\|
    \right).
\]
Hence
\[
\begin{aligned}
    \sqrt{\varepsilon_{\mathrm c}(\bm\chi(0))^2+
    \|\hat{\mathbf w}(0)\|^2}
    &\le
    \varepsilon_{\mathrm c}(\bm\chi(0))+
    \|\hat{\mathbf w}(0)\| \\
    &\le
    C_0
    \left(
        \|\mathbf x(0)-\mathbf x_{\mathrm c}^{\ast}\|
        +
        \|\bm\zeta(0)-\bm\zeta^\ast\|
        +
        \|\mathbf z(0)-\mathbf z^\ast\|
    \right),
\end{aligned}
\]
where
\(C_0:=\max\{C_e(1+\overline b c_\tau),\overline b\}\).

Furthermore,
\[
\begin{aligned}
    \|\mathbf z(t)-\mathbf z^\ast\|
    &\le
    \|\hat{\mathbf u}(t)\|
    +
    \|\mathbf u_{\mathrm c}^{\rm q}(\bm\chi(t))-
      \mathbf u_{\mathrm c}^{\ast}\|
    +
    \|\mathbf K_p\|\,\|\mathbf y_{\mathrm c}(\bm\chi(t))\| \\
    &\le
    \underline b^{-1}\|\hat{\mathbf w}(t)\|
    +
    c_\tau \varepsilon_{\mathrm c}(\bm\chi(t)).
\end{aligned}
\]
Therefore, with \(C_z:=1+c_\tau+\underline b^{-1}\), one has
\[
    \varepsilon_{\mathrm c}(\bm\chi(t))
    +
    \|\mathbf z(t)-\mathbf z^\ast\|
    \le
    C_z
    \sqrt{
        \varepsilon_{\mathrm c}(\bm\chi(t))^2
        +
        \|\hat{\mathbf w}(t)\|^2
    }.
\]
Combining the preceding estimates gives
\[
\begin{aligned}
    &\|\mathbf x(t)-\mathbf x_{\mathrm c}^{\ast}\|
    +\|\bm\zeta(t)-\bm\zeta^\ast\|
    +\|\mathbf z(t)-\mathbf z^\ast\|
    +\|\mathbf L\mathbf x(t)\|
    +\|\mathbf h^{\mathrm c}(\mathbf x(t))+\mathbf L_q\bm\zeta(t)\| \\
    &\qquad\le
    C_{\mathrm{SP}}e^{-\alpha_{\mathrm{SP}}t}
    \left(
        \|\mathbf x(0)-\mathbf x_{\mathrm c}^{\ast}\|
        +
        \|\bm\zeta(0)-\bm\zeta^\ast\|
        +
        \|\mathbf z(0)-\mathbf z^\ast\|
    \right),
\end{aligned}
\]
where 
\(
    C_{\mathrm{SP}}
    :=
        C_z\sqrt{\frac{M_\nu}{m_\nu}}\,C_0.
\)

Finally, let \(\delta_{\mathrm{SP}}:=\min\{\delta_0,\,
C_0^{-1}\sqrt{m_\nu/M_\nu}\rho\}\), where \(\delta_0>0\) is sufficiently
small so that all local estimates above hold. Then
\(\|\mathbf x(0)-\mathbf x_{\mathrm c}^{\ast}\|
+\|\bm\zeta(0)-\bm\zeta^\ast\|
+\|\mathbf z(0)-\mathbf z^\ast\|<\delta_{\mathrm{SP}}\) implies
\[
    \sqrt{
        \varepsilon_{\mathrm c}(\bm\chi(0))^2+
        \|\hat{\mathbf w}(0)\|^2
    }
    <
    \sqrt{\frac{m_\nu}{M_\nu}}\rho .
\]
Thus \(\bm\nu(0)<m_\nu\rho^2\). Since
\(\dot{\bm\nu}\le-2\alpha_{\mathrm{SP}}\bm\nu\), the set
\(\{\bm\nu\le m_\nu\rho^2\}\) is forward invariant. Hence
\(\varepsilon_{\mathrm c}(\bm\chi(t))\le\rho\) and
\(\|\hat{\mathbf w}(t)\|\le\rho\) for all \(t\ge0\). Therefore, the trajectory
stays in the local region, and the estimate \eqref{eq:sp-bound-coupled}
follows. Since $\gamma=\alpha_{\mathrm{FL}}-\alpha_{\mathrm{SP}}>0$ can be chosen arbitrarily small, this completes the proof.

\section{Proof of Theorem \ref{thm:sp-dist-enlarged-roa}}

The proof consists of two steps. We first show that the ideal FL dynamics \eqref{eq:ideal-fl-algorithm-dist}  has
an enlarged region of attraction in a tubular neighborhood of the feasible
manifold. Then we invoke the same singular-perturbation argument for \eqref{eq:sp-compact-dist} as in the proof
of Theorem~\ref{thm:sp-dist}.
Since the proof follows the same route as that of Theorem \ref{thm:sp-dist}, we only outline the main steps below and omit detailed arguments such as the specific parameter choices.

Extending the result in Lemma \ref{lem:fl-wp-dist} to \(\xb\in\mathcal N_{\mathrm d}(\varepsilon)\), 
by Assumptions~\ref{asm:graph}and \ref{asm:tube-reg-dist}, the FL algebraic relation \eqref{eq:ideal-fl-relation-dist} admits a
unique admissible solution for all
\(\xb\in\mathcal N_{\mathrm d}(\varepsilon)\), denoted by
\(\ub_{\mathrm d}^{\mathrm q}(\xb)\). Then, for the ideal FL dynamics \eqref{eq:ideal-fl-algorithm-dist} obtained
by setting
\(\ub_{\mathrm d}=\ub_{\mathrm d}^{\mathrm q}(\xb)\), one has
\[
    \dot{\yb}_{\mathrm d}
    =
    \Gb_{\mathrm d}(\yb_{\mathrm d}).
\]
Therefore, by the design of \(\mathbf G_{\mathrm d}\), there exists a Lyapunov
function \(V_y(\yb_{\mathrm d})\) for the output subsystem such that, for some
positive constants \(c_1,c_2\),
\[
    c_1\|\yb_{\mathrm d}\|^2
    \le
    V_y(\yb_{\mathrm d})
    \le
    c_2\|\yb_{\mathrm d}\|^2,
    \qquad
    \dot V_y
    \le
    -2\alpha_GV_y .
\]

It remains to analyze the tangential motion. On the zero-output manifold
\(\mathcal X_{\mathrm d}^{\ast}\), each state can be written as
\(\xb=\mathbf 1_N\otimes\sbf\), where
\(\sbf\in\Omega_{\mathrm d}^{\ast}\). Since the ideal FL input cancels the
normal component of \(-\nabla f(\xb)\) relative to the zero-output manifold,
the zero dynamics is the projected gradient flow
\[
    \dot{\sbf}
    =
    -\frac{1}{N}
    \Pi_{T_{\sbf}\Omega_{\mathrm d}}
    \nabla F(\sbf).
\]
Let
\(
    V_F(\sbf)
    :=
    F(\sbf)-F(\sbf_{\mathrm d}^{\ast}).
\)
Along the zero dynamics, Assumption~\ref{asm:manifold-pl-dist} gives
\[
\begin{aligned}
    \dot V_F
    &=
    -\frac{1}{N}
    \left\|
        \Pi_{T_{\sbf}\Omega_{\mathrm d}}
        \nabla F(\sbf)
    \right\|^2     \\
    &\le
    -\frac{2\mu_{\mathrm d}}{N}
    \left(
        F(\sbf)-F(\sbf_{\mathrm d}^{\ast})
    \right)
    =
    -\frac{2\mu_{\mathrm d}}{N}V_F .
\end{aligned}
\]
Hence the zero dynamics converges exponentially in objective residual on
\(\Omega_{\mathrm d}^{\ast}\).

We next consider the case \(\yb_{\mathrm d}\neq0\) but
\(\xb\in\mathcal N_{\mathrm d}(\varepsilon)\). By
Assumptions~\ref{asm:tube-smooth-dist} and \ref{asm:tube-reg-dist}, the
tubular neighborhood admits a smooth decomposition into tangential and normal
components with uniformly bounded derivatives. Therefore, the tangential
component of the ideal FL dynamics can be written as
\[
    \dot{\sbf}
    =
    -\frac{1}{N}
    \Pi_{T_{\sbf}\Omega_{\mathrm d}}
    \nabla F(\sbf)
    +
    \Delta_s(\sbf,\yb_{\mathrm d}),
\]
where the perturbation term satisfies
\(
    \|\Delta_s(\sbf,\yb_{\mathrm d})\|
    \le
    c_\Delta \|\yb_{\mathrm d}\|
\)
for some constant \(c_\Delta>0\). Consequently,
\[
\begin{aligned}
    \dot V_F
    &=
    \nabla_{\Omega_{\mathrm d}}F(\sbf)^\top\dot{\sbf}  \\
    &\le
    -\frac{1}{N}
    \|\nabla_{\Omega_{\mathrm d}}F(\sbf)\|^2
    +
    c_\Delta
    \|\nabla_{\Omega_{\mathrm d}}F(\sbf)\|
    \|\yb_{\mathrm d}\|                                  \\
    &\le
    -\left(\frac{2\mu_{\mathrm d}}{N}-\varepsilon\right)V_F
    +
    c_\Delta' \|\yb_{\mathrm d}\|^2
\end{aligned}
\]
for arbitrarily small $\varepsilon>0$ and some constant \(c_\Delta'>0\), where
\(\nabla_{\Omega_{\mathrm d}}F(\sbf)
:=
\Pi_{T_{\sbf}\Omega_{\mathrm d}}\nabla F(\sbf)\) and the last inequality uses
Assumption~\ref{asm:manifold-pl-dist} together with Young's inequality.

Define the composite Lyapunov function
\(
    W_{\mathrm{FL}}
    :=
    V_F+\gamma V_y ,
\)
where \(\gamma>0\) is sufficiently large. Combining the preceding
estimates yields
\[
    \dot W_{\mathrm{FL}}
    \le
    -2\alpha_{\mathrm{FL}}W_{\mathrm{FL}}
\]
for any
\(
    \alpha_{\mathrm{FL}}
    \in
    \left(
        0,
        \min
        \left\{
            \frac{2\mu_{\mathrm d}}{N},
            \alpha_G
        \right\}
    \right).
\)
Thus, the ideal FL dynamics is exponentially stable with respect to the
objective residual and the output residual in the tubular neighborhood
\(\mathcal N_{\mathrm d}(\varepsilon)\).

Since \(F\) satisfies the local quadratic-growth condition in
Assumption~\ref{asm:growth}, the objective residual controls the distance to
\(\sbf_{\mathrm d}^{\ast}\) once the trajectory enters the neighborhood
\(\mathcal U_{\mathrm d}\). Because \(V_F(t)\) decays exponentially and
\(\sbf_{\mathrm d}^{\ast}\) is the unique minimizer on
\(\Omega_{\mathrm d}^{\ast}\), the trajectory enters this local neighborhood
in finite time. Thus, for \eqref{eq:ideal-fl-algorithm-dist} under the assumptions in this theorem, we obtain
\[
    \|\xb(t)-\xb_{\mathrm d}^{\ast}\|
    +
    \|\mathbf L\xb(t)\|
    +
    \|\mathbf h^{\mathrm d}(\xb(t))\|
    \le
    C_{\mathrm{FL}}e^{-\alpha_{\mathrm{FL}}t}
    \Big(
        \|\xb(0)-\xb_{\mathrm d}^{\ast}\|
        +
        \|\mathbf L\xb(0)\|
        +
        \|\mathbf h^{\mathrm d}(\xb(0))\|
    \Big).
\]
for some $C_{\rm FL}>0$.

We now turn to the SP realization \eqref{eq:sp-compact-dist}. Let
\(
    \tilde{\ub}_{\mathrm d}
    :=
    \ub_{\mathrm d}-\ub_{\mathrm d}^{\mathrm q}(\xb).
\)
Recalling that \(\ub_{\mathrm d}=\mathbf K_p\yb_{\mathrm d}+\zb\), we have
\[
    \tilde{\ub}_{\mathrm d}
    =
    \zb-\zb_{\mathrm d}^{\mathrm q}(\xb),
    \qquad
    \zb_{\mathrm d}^{\mathrm q}(\xb)
    :=
    \ub_{\mathrm d}^{\mathrm q}(\xb)-\mathbf K_p\yb_{\mathrm d}(\xb).
\]
Thus, the natural fast error in the SP coordinates is the distance from
\(\zb\) to the state-dependent quasi-steady graph
\(\zb_{\mathrm d}^{\mathrm q}(\xb)\). This explains the initialization
condition
\(
    \|\zb(0)-\zb_{\mathrm d}^{\mathrm q}(\xb(0))\|<\delta_z,
\)
which is equivalent to requiring
\(
    \|\ub_{\mathrm d}(0)-\ub_{\mathrm d}^{\mathrm q}(\xb(0))\|<\delta_z.
\)

By Assumptions~\ref{asm:tube-smooth-dist} and \ref{asm:tube-reg-dist}, the
quasi-steady map \(\ub_{\mathrm d}^{\mathrm q}(\xb)\) is well defined and
Lipschitz on \(\mathcal N_{\mathrm d}(\varepsilon)\), and the boundary-layer
system associated with \(\tilde{\ub}_{\mathrm d}\) is uniformly exponentially
stable on the admissible space \(\mathcal A_{\mathrm d}\). Hence, using the
same singular-perturbation argument as in the proof of
Theorem~\ref{thm:sp-dist}, the exponential convergence of the ideal FL dynamics
established above is preserved by the SP realization
\eqref{eq:sp-compact-dist} for all sufficiently small \(\tau\).
The proof is complete.

\section{Proof of Theorem~\ref{thm:euler-sp-dist}}
\label{app:euler-sp-dist}

Let \(\bm\xi_{\mathrm d}:=\operatorname{col}(\mathbf x,\mathbf z_c,\mathbf z_h)\) and
\(\bm\xi_{\mathrm d}^{\ast}:=\operatorname{col}(\mathbf x_{\mathrm d}^{\ast},\mathbf z^\ast)\). We work on the invariant subspace
\(\mathcal S_{\mathrm d}:=\{\operatorname{col}(\mathbf x,\mathbf z_c,\mathbf z_h):
\mathbf x\in\mathbb R^{Nd},\mathbf z_c\in\operatorname{Im}\mathbf L,
\mathbf z_h\in\mathbb R^r\}\). On this subspace, the continuous-time SP
realization \eqref{eq:sp-compact-dist} can be written compactly as
\(\dot{\bm\xi}_{\mathrm d}=\Phi_{\mathrm d}(\bm\xi_{\mathrm d})\), where
\(\Phi_{\mathrm d}\) is continuously differentiable in a neighborhood of
\(\bm\xi_{\mathrm d}^{\ast}\). FS-NDE is the Euler
map \(\bm\xi_{\mathrm d}^{k+1}=\Psi_{\mathrm d}(\bm\xi_{\mathrm d}^{k})
:=\bm\xi_{\mathrm d}^{k}+T\Phi_{\mathrm d}(\bm\xi_{\mathrm d}^{k})\). Since
\(\Psi_{\mathrm d}(\bm\xi_{\mathrm d})=\bm\xi_{\mathrm d}\) if and only if
\(\Phi_{\mathrm d}(\bm\xi_{\mathrm d})=\mathbf 0\), the fixed points of the
Euler map coincide with the equilibria of the continuous-time SP system. By
Theorem~\ref{thm:sp-dist}, \(\bm\xi_{\mathrm d}^{\ast}\) is the locally unique
equilibrium of \eqref{eq:sp-compact-dist}; hence it is also the locally unique
equilibrium of FS-NDE.

Let \(\mathbf R_{\mathrm d}:=\frac{\partial \Phi_{\mathrm d}}
{\partial \bm\xi_{\mathrm d}}(\bm\xi_{\mathrm d}^{\ast})\) be the Jacobian
restricted to \(\mathcal S_{\mathrm d}\). By Theorem~\ref{thm:sp-dist},
\(\bm\xi_{\mathrm d}^{\ast}\) is locally exponentially stable for the
continuous-time SP system with admissible rate \(\alpha_{\mathrm{SP}}\). Hence
\(\mathbf R_{\mathrm d}\) is Hurwitz. Fix any
\(\alpha_{\mathrm E}\in(0,\alpha_{\mathrm{SP}})\) and set
\(\bar\alpha_{\mathrm E}:=(\alpha_{\mathrm E}+\alpha_{\mathrm{SP}})/2\). Then
there exists \(T_{\mathrm d}^\ast>0\) such that, for every
\(T\in(0,T_{\mathrm d}^\ast)\), the matrix
\(\mathbf R_{\mathrm d,T}:=\mathbf I+T\mathbf R_{\mathrm d}\) satisfies
\(\rho(\mathbf R_{\mathrm d,T})<e^{-\bar\alpha_{\mathrm E}T}<e^{-\alpha_{\mathrm E}T}\).
For such a fixed \(T\), let \(\mathbf P_{\mathrm d,T}\succ0\) be the unique
solution of
\[
    \mathbf R_{\mathrm d,T}^{\top}\mathbf P_{\mathrm d,T}\mathbf R_{\mathrm d,T}
    -e^{-2\bar\alpha_{\mathrm E}T}\mathbf P_{\mathrm d,T}
    =
    -\mathbf I .
\]
Denote \(m_{\mathrm d,T}:=\lambda_{\min}(\mathbf P_{\mathrm d,T})\),
\(M_{\mathrm d,T}:=\lambda_{\max}(\mathbf P_{\mathrm d,T})\), and
\(\bar C_{\mathrm d,T}:=\sqrt{M_{\mathrm d,T}/m_{\mathrm d,T}}\). Since
\(\Psi_{\mathrm d}\) is continuously differentiable, for
\(\varepsilon_{\mathrm d,T}:=\sqrt{m_{\mathrm d,T}/M_{\mathrm d,T}}
(e^{-\alpha_{\mathrm E}T}-e^{-\bar\alpha_{\mathrm E}T})>0\), there exists
\(r_{\mathrm d,T}>0\) such that, whenever
\(\|\bm\xi_{\mathrm d}-\bm\xi_{\mathrm d}^{\ast}\|\le r_{\mathrm d,T}\), one has
\[
    \left\|
    \Psi_{\mathrm d}(\bm\xi_{\mathrm d})-\bm\xi_{\mathrm d}^{\ast}
    -\mathbf R_{\mathrm d,T}(\bm\xi_{\mathrm d}-\bm\xi_{\mathrm d}^{\ast})
    \right\|
    \le
    \varepsilon_{\mathrm d,T}
    \|\bm\xi_{\mathrm d}-\bm\xi_{\mathrm d}^{\ast}\|.
\]
Using the norm \(\|\eta\|_{\mathbf P_{\mathrm d,T}}:=
\sqrt{\eta^\top\mathbf P_{\mathrm d,T}\eta}\), it follows that, as long as
\(\bm\xi_{\mathrm d}^{k}\) remains in this neighborhood,
\[
\begin{aligned}
    \|\bm\xi_{\mathrm d}^{k+1}-\bm\xi_{\mathrm d}^{\ast}\|_{\mathbf P_{\mathrm d,T}}
    &\le
    e^{-\bar\alpha_{\mathrm E}T}
    \|\bm\xi_{\mathrm d}^{k}-\bm\xi_{\mathrm d}^{\ast}\|_{\mathbf P_{\mathrm d,T}}
    +
    \sqrt{M_{\mathrm d,T}}\varepsilon_{\mathrm d,T}
    \|\bm\xi_{\mathrm d}^{k}-\bm\xi_{\mathrm d}^{\ast}\|  \\
    &\le
    e^{-\alpha_{\mathrm E}T}
    \|\bm\xi_{\mathrm d}^{k}-\bm\xi_{\mathrm d}^{\ast}\|_{\mathbf P_{\mathrm d,T}} .
\end{aligned}
\]
Therefore,
\[
    \|\bm\xi_{\mathrm d}^{k}-\bm\xi_{\mathrm d}^{\ast}\|
    \le
    \bar C_{\mathrm d,T}e^{-\alpha_{\mathrm E}kT}
    \|\bm\xi_{\mathrm d}^{0}-\bm\xi_{\mathrm d}^{\ast}\|,
    \qquad k=0,1,2,\ldots .
\]

It remains to estimate the residual terms appearing in the theorem. Let
\(r_{\mathrm h,d}>0\) be sufficiently small and define
\(L_{\mathrm h,d}:=\sup_{\|\mathbf x-\mathbf x_{\mathrm d}^{\ast}\|\le r_{\mathrm h,d}}
\|\tilde{\mathbf J}^{\mathrm d}(\mathbf x)\|\). Since
\(\mathbf h^{\mathrm d}(\mathbf x_{\mathrm d}^{\ast})=\mathbf 0\), one has
\(\|\mathbf h^{\mathrm d}(\mathbf x)\|\le
L_{\mathrm h,d}\|\mathbf x-\mathbf x_{\mathrm d}^{\ast}\|\) locally. Moreover,
\(\mathbf L\mathbf x_{\mathrm d}^{\ast}=\mathbf 0\), and hence
\[
\begin{aligned}
    &\|\mathbf x^k-\mathbf x_{\mathrm d}^{\ast}\|
    +\|\mathbf z^k-\mathbf z^\ast\|
    +\|\mathbf L\mathbf x^k\|
    +\|\mathbf h^{\mathrm d}(\mathbf x^k)\|  \\
    &\qquad\le
    \left(\sqrt{2}+\|\mathbf L\|+L_{\mathrm h,d}\right)
    \|\bm\xi_{\mathrm d}^{k}-\bm\xi_{\mathrm d}^{\ast}\|.
\end{aligned}
\]
Combining this bound with the preceding estimate gives the desired inequality
with
\[
    C_{\mathrm E}
    =
    \left(\sqrt{2}+\|\mathbf L\|+L_{\mathrm h,d}\right)
    \bar C_{\mathrm d,T}
    =
    \left(\sqrt{2}+\|\mathbf L\|+L_{\mathrm h,d}\right)
    \sqrt{\frac{M_{\mathrm d,T}}{m_{\mathrm d,T}}}.
\]
Finally, we let
\(\delta_{\mathrm E}:=\min\{r_{\mathrm d,T}/\bar C_{\mathrm d,T},
r_{\mathrm h,d}/\bar C_{\mathrm d,T}\}\) so that the
trajectory remains in the local neighborhood where all estimates above hold.
This completes the proof.

\section{Proof of Theorem~\ref{thm:euler-sp-coupled}}
\label{app:euler-sp-coupled}

Let \(\bm\xi_{\mathrm c}:=\operatorname{col}(\mathbf x,\bm\zeta,\mathbf z_c,\mathbf z_h)\) and
\(\bm\xi_{\mathrm c}^{\ast}:=\operatorname{col}
(\mathbf x_{\mathrm c}^{\ast},\bm\zeta^\ast,\mathbf z^\ast)\). We work on
the invariant subspace
\(\mathcal S_{\mathrm c}:=\{\operatorname{col}(\mathbf x,\bm\zeta,\mathbf z_c,\mathbf z_h):
\mathbf x\in\mathbb R^{Nd},\bm\zeta\in\operatorname{Im}\mathbf L_q,
\mathbf z_c\in\operatorname{Im}\mathbf L,\mathbf z_h\in\mathbb R^{Nq}\}\).
On this subspace, the continuous-time SP realization
\eqref{eq:sp-compact-coupled} can be written compactly as
\(\dot{\bm\xi}_{\mathrm c}=\Phi_{\mathrm c}(\bm\xi_{\mathrm c})\), where
\(\Phi_{\mathrm c}\) is continuously differentiable in a neighborhood of
\(\bm\xi_{\mathrm c}^{\ast}\). FS-NCE is the
Euler map \(\bm\xi_{\mathrm c}^{k+1}=\Psi_{\mathrm c}(\bm\xi_{\mathrm c}^{k})
:=\bm\xi_{\mathrm c}^{k}+T\Phi_{\mathrm c}(\bm\xi_{\mathrm c}^{k})\). Since
\(\Psi_{\mathrm c}(\bm\xi_{\mathrm c})=\bm\xi_{\mathrm c}\) if and only if
\(\Phi_{\mathrm c}(\bm\xi_{\mathrm c})=\mathbf 0\), the fixed points of the
Euler map coincide with the equilibria of the continuous-time SP system. By
Theorem~\ref{thm:sp-coupled}, \(\bm\xi_{\mathrm c}^{\ast}\) is the locally
unique equilibrium of \eqref{eq:sp-compact-coupled}; hence it is also the
locally unique equilibrium of FS-NCE.

Let \(\mathbf R_{\mathrm c}:=\frac{\partial \Phi_{\mathrm c}}
{\partial \bm\xi_{\mathrm c}}(\bm\xi_{\mathrm c}^{\ast})\) be the Jacobian
restricted to \(\mathcal S_{\mathrm c}\). By Theorem~\ref{thm:sp-coupled},
\(\bm\xi_{\mathrm c}^{\ast}\) is locally exponentially stable for the
continuous-time SP system with admissible rate \(\alpha_{\mathrm{SP}}\). Hence
\(\mathbf R_{\mathrm c}\) is Hurwitz. Fix any
\(\alpha_{\mathrm E}\in(0,\alpha_{\mathrm{SP}})\) and set
\(\bar\alpha_{\mathrm E}:=(\alpha_{\mathrm E}+\alpha_{\mathrm{SP}})/2\). Then
there exists \(T_{\mathrm c}^\ast>0\) such that, for every
\(T\in(0,T_{\mathrm c}^\ast)\), the matrix
\(\mathbf R_{\mathrm c,T}:=\mathbf I+T\mathbf R_{\mathrm c}\) satisfies
\(\rho(\mathbf R_{\mathrm c,T})<e^{-\bar\alpha_{\mathrm E}T}<e^{-\alpha_{\mathrm E}T}\).
For such a fixed \(T\), let \(\mathbf P_{\mathrm c,T}\succ0\) be the unique
solution of
\[
    \mathbf R_{\mathrm c,T}^{\top}\mathbf P_{\mathrm c,T}\mathbf R_{\mathrm c,T}
    -e^{-2\bar\alpha_{\mathrm E}T}\mathbf P_{\mathrm c,T}
    =
    -\mathbf I .
\]
Denote \(m_{\mathrm c,T}:=\lambda_{\min}(\mathbf P_{\mathrm c,T})\),
\(M_{\mathrm c,T}:=\lambda_{\max}(\mathbf P_{\mathrm c,T})\), and
\(\bar C_{\mathrm c,T}:=\sqrt{M_{\mathrm c,T}/m_{\mathrm c,T}}\). Since
\(\Psi_{\mathrm c}\) is continuously differentiable, for
\(\varepsilon_{\mathrm c,T}:=\sqrt{m_{\mathrm c,T}/M_{\mathrm c,T}}
(e^{-\alpha_{\mathrm E}T}-e^{-\bar\alpha_{\mathrm E}T})>0\), there exists
\(r_{\mathrm c,T}>0\) such that, whenever
\(\|\bm\xi_{\mathrm c}-\bm\xi_{\mathrm c}^{\ast}\|\le r_{\mathrm c,T}\), one has
\[
    \left\|
    \Psi_{\mathrm c}(\bm\xi_{\mathrm c})-\bm\xi_{\mathrm c}^{\ast}
    -\mathbf R_{\mathrm c,T}(\bm\xi_{\mathrm c}-\bm\xi_{\mathrm c}^{\ast})
    \right\|
    \le
    \varepsilon_{\mathrm c,T}
    \|\bm\xi_{\mathrm c}-\bm\xi_{\mathrm c}^{\ast}\|.
\]
Using the norm \(\|\eta\|_{\mathbf P_{\mathrm c,T}}:=
\sqrt{\eta^\top\mathbf P_{\mathrm c,T}\eta}\), it follows that, as long as
\(\bm\xi_{\mathrm c}^{k}\) remains in this neighborhood,
\[
\begin{aligned}
    \|\bm\xi_{\mathrm c}^{k+1}-\bm\xi_{\mathrm c}^{\ast}\|_{\mathbf P_{\mathrm c,T}}
    &\le
    e^{-\bar\alpha_{\mathrm E}T}
    \|\bm\xi_{\mathrm c}^{k}-\bm\xi_{\mathrm c}^{\ast}\|_{\mathbf P_{\mathrm c,T}}
    +
    \sqrt{M_{\mathrm c,T}}\varepsilon_{\mathrm c,T}
    \|\bm\xi_{\mathrm c}^{k}-\bm\xi_{\mathrm c}^{\ast}\|  \\
    &\le
    e^{-\alpha_{\mathrm E}T}
    \|\bm\xi_{\mathrm c}^{k}-\bm\xi_{\mathrm c}^{\ast}\|_{\mathbf P_{\mathrm c,T}} .
\end{aligned}
\]
Therefore,
\[
    \|\bm\xi_{\mathrm c}^{k}-\bm\xi_{\mathrm c}^{\ast}\|
    \le
    \bar C_{\mathrm c,T}e^{-\alpha_{\mathrm E}kT}
    \|\bm\xi_{\mathrm c}^{0}-\bm\xi_{\mathrm c}^{\ast}\|,
    \qquad k=0,1,2,\ldots .
\]

It remains to estimate the residual terms appearing in the theorem. Let
\(r_{\mathrm h,c}>0\) be sufficiently small and define
\(L_{\mathrm h,c}:=\sup_{\|\mathbf x-\mathbf x_{\mathrm c}^{\ast}\|\le r_{\mathrm h,c}}
\|\tilde{\mathbf J}^{\mathrm c}(\mathbf x)\|\). Since
\(\mathbf h^{\mathrm c}(\mathbf x_{\mathrm c}^{\ast})+\mathbf L_q\bm\zeta^\ast=\mathbf 0\),
one has locally
\[
    \|\mathbf h^{\mathrm c}(\mathbf x)+\mathbf L_q\bm\zeta\|
    \le
    L_{\mathrm h,c}\|\mathbf x-\mathbf x_{\mathrm c}^{\ast}\|
    +
    \|\mathbf L_q\|\|\bm\zeta-\bm\zeta^\ast\|.
\]
Moreover, \(\mathbf L\mathbf x_{\mathrm c}^{\ast}=\mathbf 0\), and hence
\[
\begin{aligned}
    &\|\mathbf x^k-\mathbf x_{\mathrm c}^{\ast}\|
    +\|\bm\zeta^k-\bm\zeta^\ast\|
    +\|\mathbf z^k-\mathbf z^\ast\|
    +\|\mathbf L\mathbf x^k\| 
    +\|\mathbf h^{\mathrm c}(\mathbf x^k)+\mathbf L_q\bm\zeta^k\|  \\
    &\qquad\le
    \left(\sqrt{3}+\|\mathbf L\|+L_{\mathrm h,c}+\|\mathbf L_q\|\right)
    \|\bm\xi_{\mathrm c}^{k}-\bm\xi_{\mathrm c}^{\ast}\|.
\end{aligned}
\]
Combining this bound with the preceding estimate gives the desired inequality
with
\[
    C_{\mathrm E}
    =
    \left(\sqrt{3}+\|\mathbf L\|+L_{\mathrm h,c}+\|\mathbf L_q\|\right)
    \bar C_{\mathrm c,T}
    =
    \left(\sqrt{3}+\|\mathbf L\|+L_{\mathrm h,c}+\|\mathbf L_q\|\right)
    \sqrt{\frac{M_{\mathrm c,T}}{m_{\mathrm c,T}}}.
\]
Finally, we let
\(\delta_{\mathrm E}:=\min\{r_{\mathrm c,T}/\bar C_{\mathrm c,T},
r_{\mathrm h,c}/\bar C_{\mathrm c,T}\}\) so that the
trajectory remains in the local neighborhood where all estimates above hold.
This completes the proof.

\bibliographystyle{agsm}
\bibliography{FL_DO/reference}  

\end{document}